\documentclass[11pt]{amsart}

\usepackage{amsmath, amssymb, amsthm,mathtools,tikz,tikz-cd,graphicx,color,stmaryrd,hyperref,enumerate,mathdots,mathrsfs,dynkin-diagrams}
\usepackage[new]{old-arrows}
\usepackage[shortlabels]{enumitem}

\usepackage[utf8]{inputenc}

\usepackage{appendix}
\usepackage[margin=1in]{geometry}

\usepackage[english]{babel}

\hypersetup{
    colorlinks,
    citecolor=blue,
    filecolor=blue,
    linkcolor=blue,
    urlcolor=blue
}

\DeclareSymbolFont{extraup}{U}{zavm}{m}{n}
\DeclareMathSymbol{\varheart}{\mathalpha}{extraup}{86}

\newcommand{\shear}{{\mathbin{\mkern-6mu\fatslash}}}

\newcommand{\fg}{\mathfrak{g}}

\newcommand{\fn}{\mathfrak{n}}
\newcommand{\fb}{\mathfrak{b}}
\newcommand{\fz}{\mathfrak{z}}
\newcommand{\bG}{\mathbb{G}}
\newcommand{\bL}{\mathbb{L}}
\newcommand{\B}{\mathbb{B}}
\newcommand{\bA}{\mathbb{A}}
\newcommand{\cF}{\mathcal{F}}
\newcommand{\cG}{\mathcal{G}}

\newcommand{\Z}{{\mathbb Z}}
\newcommand{\cL}{{\mathcal L}}
\newcommand{\cM}{{\mathcal M}}
\newcommand{\cC}{{\mathrm C}}
\newcommand{\cY}{{\mathcal Y}}
\newcommand{\cO}{{\mathcal O}}

\newcommand{\pr}{\operatorname{pr}}

\newcommand{\reg}{\mathrm{reg}}
\newcommand{\irreg}{\mathrm{irreg}}
\newcommand{\et}{\mathrm{\acute{e}t}}

\newcommand{\cN}{{\mathcal N}}
\newcommand{\ft}{{\mathfrak{t}}}

\newcommand{\ab}{{\mathrm{ab}}}

\newcommand{\cE}{\mathcal{E}}
\newcommand{\triv}{\mathrm{triv}}
\newcommand{\spec}{\mathrm{spec}}
\newcommand{\pt}{\mathrm{pt}}
\newcommand{\dR}{\mathrm{dR}}

\newcommand{\Coh}{\operatorname{Coh}}
\newcommand{\Rep}{\operatorname{Rep}}
\newcommand{\Spec}{\operatorname{Spec}}
\newcommand{\Vect}{\operatorname{Vect}}
\newcommand{\Ge}{\mathrm{Ge}}

\newcommand{\Sym}{\operatorname{Sym}}
\newcommand{\Sing}{\operatorname{Sing}}

\newcommand{\SL}{{\mathrm{SL}}}

\newcommand{\cP}{{\mathcal{P}}}

\newcommand{\Bun}{{\mathrm{Bun}}}
\newcommand{\Hck}{{\mathrm{Hck}}}
\newcommand{\Kos}{{\mathrm{Kos}}}
\newcommand{\cHck}{\mathcal{H}\mathrm{ck}}
\newcommand{\Sat}{\operatorname{Sat}}
\newcommand{\Sph}{\operatorname{Sph}}
\newcommand{\Whit}{\operatorname{Whit}}

\newcommand{\Eis}{\operatorname{Eis}}
\newcommand{\CT}{\operatorname{CT}}
\newcommand{\Poinc}{\operatorname{Poinc}}
\newcommand{\pp}{{\mathsf{p}}}
\newcommand{\qq}{{\mathsf{q}}}
\newcommand{\rr}{{\mathsf{r}}}
\newcommand{\abc}{{\mathrm{abc}}}
\newcommand{\Nilp}{\mathrm{Nilp}}

\newcommand{\qt}{\operatorname{quasi-temp}}
\newcommand{\mat}{\operatorname{max-anti-temp}}
\newcommand{\sing}{\operatorname{sing}}
\newcommand{\oblv}{\operatorname{oblv}}

\DeclareMathOperator{\Lie}{Lie}
\DeclareMathOperator{\ad}{ad}

\DeclareMathOperator{\Hom}{Hom}
\DeclareMathOperator{\QCoh}{QCoh}

\DeclareMathOperator{\Dmod}{D-mod}
\DeclareMathOperator{\IndCoh}{IndCoh}
\DeclareMathOperator{\LS}{LS}
\DeclareMathOperator{\Map}{Map}

\newtheorem{thm}{Theorem}[section]
\newtheorem{lemma}[thm]{Lemma}
\newtheorem{cor}[thm]{Corollary}
\newtheorem{prop}[thm]{Proposition}

\newtheorem{example}[thm]{Example}

\theoremstyle{remark}
\newtheorem*{remark}{Remark}

\theoremstyle{definition}
\newtheorem{defn}[thm]{Definition}

\title[The constant sheaf under geometric Langlands]{The image of the constant sheaf under\\ the geometric Langlands equivalence}

\makeatletter
\let\@wraptoccontribs\wraptoccontribs
\makeatother

\author{Kenta Suzuki}
\address{Department of Mathematics, Princeton University, Fine Hall, Washington Road, Princeton, NJ 08544-1000, USA}
\email{kjsuzuki@princeton.edu}

\begin{document}

\begin{abstract}
Let $X$ be a smooth projective curve and let $G$ be a reductive group over an algebraically closed field of characteristic zero. We compute the image of arbitrary finite-rank local systems on $\Bun_G(X)$ under the geometric Langlands equivalence, confirming a conjecture of V. Lafforgue in the case of the constant sheaf.
\end{abstract}

\maketitle

\section{Introduction} 
Let $X$ be a smooth projective curve over an algebraically closed field $e$ of characteristic zero. Let $G$ be a reductive group over $e$ and let $\check G$ be its dual group. We furthermore choose a spin structure $\omega_X^{1/2}$ on $X$.
The geometric Langlands equivalence \cite{GLC1,GLC2,GLC3,GLC4,GLC5} states\footnote{The reader may be accustomed to seeing $\Dmod_{\frac12}(\Bun_G)$, the category of D-modules twisted by $\det_{\Bun_G}^{1/2}$, on the left-hand side. The choice of spin structure on $X$ gives a choice of square root of $\det_{\Bun_G}$ which identifies $\Dmod_{\frac12}(\Bun_G)$ with $\Dmod(\Bun_G)$, as discussed in \cite[\S1.1.5]{GLC1}.
}
    \[
\mathbb L_G\colon\Dmod(\Bun_G)\simeq\IndCoh_{\Nilp}(\LS_{\check G}).
\]
We confirm an unpublished conjecture of V. Lafforgue and compute the image of $\omega_{\Bun_G}$ under $\mathbb L_G$. 
\begin{remark}
    The coarse geometric Langlands functor $\mathbb L_{G,\mathrm{coarse}}\colon\Dmod(\Bun_G)\to\QCoh(\LS_{\check G})$ kills the object $\omega_{\Bun_G}$, as noted in \cite[\S2.1.4]{GLC1}. Thus, the computation of $\mathbb L_G(\omega_{\Bun_G})$ has to do with \emph{cohomological subtleties} of the difference between quasi-coherent sheaves and ind-coherent sheaves.
\end{remark}
\subsection{Lafforgue's expectation}\label{subsec:intro-laff}
Fix a maximal torus and Borel $\check T\subset \check B\subset\check G$, and let $\check N$ be the unipotent radical of $\check B$. For simplicity, in the introduction we \emph{assume $\check\rho$ exists as a cocharacter $\bG_m\to\check T$} (which holds, for example, when $G$ is simply connected). Lafforgue considers the correspondence
\begin{equation}\label{eq:intro-corr}\begin{tikzcd}
    &\LS_{\check N}/\check\rho(\bG_m)\arrow[swap]{dl}{\iota}\arrow{dr}{\Sigma}\\
    \LS_{\check G}&&\Omega\bA^1/\bG_m.
\end{tikzcd}\end{equation}
Here, $\iota$ is the composition (this is where the existence of $\check\rho$ is used)
\[
\LS_{\check N}/\check\rho(\bG_m)\to\LS_{\check N\rtimes_{\check\rho}\bG_m}\to\LS_{\check G}
\]and $\Sigma$ is the composition
\[
\LS_{\check N}\to\LS_{\bG_a}\to\Omega\bA^1,
\]
where $\LS_{\check N}\to\LS_{\bG_a}$ is induced from a non-degenerate character $\check N\to\bG_a$ and $\LS_{\bG_a}\to\Omega\bA^1$ arises from the isomorphism $\LS_{\bG_a}\simeq\cC^\bullet_\dR(X)[1]$ and the trace map $\cC^\bullet_\dR(X)\to e[-2]$ (see Lemma~\ref{lem:LS_Ga}).

There exists a certain multiplicative object $\exp^\spec\in\IndCoh(\Omega\bA^1/\bG_m)$ described in \cite[\S11.6.3]{relative-langlands} (see Definition~\ref{def:spectral-exponential}). Define the \emph{spectral Poincar\'e sheaf} to be
\begin{equation}\label{intro-eq:defn-of-poinc-spec}
\Poinc^\spec:=\iota_*^{\IndCoh}\Sigma^!(\exp^\spec).
\end{equation}
\begin{thm}[Theorem~\ref{prop:image-of-constant-sheaf}]\label{intro-thm1}
    Suppose $\check\rho$ is a cocharacter. Then there is an isomorphism \[\mathbb L_G(\omega_{\Bun_G})\simeq\Poinc^\spec.\]
\end{thm}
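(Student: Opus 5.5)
The plan is to identify $\mathbb L_G(\omega_{\Bun_G})$ through the functor it corepresents, not by building it directly. For $\cF\in\Dmod(\Bun_G)$ compact, $\Hom(\cF,\omega_{\Bun_G})$ is the linear dual of the compactly supported de Rham cochains $\cC^\bullet_{\dR,c}(\Bun_G,\cF)$, i.e.\ of the pushforward of $\cF$ along $p\colon\Bun_G\to\pt$. So $\omega_{\Bun_G}$ is characterized by $p_!$, and it suffices to show two things. First, the functor $\cF\mapsto p_!(\cF)$ transported along $\mathbb L_G$ is corepresented, in the sense of $\IndCoh_{\Nilp}$-Serre duality, by $\Poinc^\spec$. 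Second, the identification is functorial, so that the two corepresenting objects agree.

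\emph{Step 1: an automorphic version of the correspondence \eqref{eq:intro-corr}.} I would realize $p_!$ as a degeneration of Whittaker coefficients. Consider $\Bun_N^{\omega^\rho}$, the maps $\Bun_N^{\omega^\rho}\to\Bun_G$ and $\Bun_N^{\omega^\rho}\to\bA^1$ (the latter given by the character), and the $\bG_m$-action through $\rho$. This yields an $\bA^1/\bG_m$-family of Whittaker coefficient functors $\mathrm{coeff}_\lambda$. For $\lambda\neq0$ this is the usual Whittaker coefficient. At $\lambda=0$ it is the non-twisted constant term followed by cohomology over $\Bun_T$. Rather than looking at the $\lambda=0$ fibre, I would express $p_!$ as the Fourier transform in $\lambda$ of the family, i.e.\ as integration against $\exp$ over $\bA^1/\bG_m$. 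The idea is the classical identity: summing Whittaker coefficients over all characters, including the trivial one, recovers the integral over the whole space. The geometric input is that $\Bun_N^{\omega^\rho}\to\Bun_G$ is ``generically a contraction'' after integrating over the character line. I would verify this by a Fourier--Deligne computation along the fibres of $\Bun_N\to\Bun_{N/[N,N]}$ and of $\Bun_{N/[N,N]}\to\bA^1$.

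\emph{Step 2: transport through $\mathbb L_G$.} By the construction of $\mathbb L_G$ in \cite{GLC1}, $\mathrm{coeff}_{\lambda=1}$ corresponds to $\Gamma(\LS_{\check G},-)$, up to the normalization fixed by $\omega_X^{1/2}$. By $\bG_m$-equivariance, the whole family corresponds to the functor obtained from $\iota^!$ and $\Sigma_*$ on $\LS_{\check N}/\check\rho(\bG_m)$; this step uses the compatibility of $\mathbb L_G$ with Eisenstein series and constant terms. The automorphic exponential over $\bA^1/\bG_m$ then has to be matched with $\exp^\spec$ on $\Omega\bA^1/\bG_m$. I expect this via Koszul duality between $\bA^1$ and $\Omega\bA^1$, using Lemma~\ref{lem:LS_Ga} and the trace $\cC^\bullet_\dR(X)\to e[-2]$, which is the spectral shadow of $\cC^\bullet_\dR(\Bun_{\bG_a})$. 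Combining these gives $\mathbb L_G(p_!)$ as pairing against $\iota_*^{\IndCoh}\Sigma^!(\exp^\spec)$, and hence the theorem.

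\emph{Main obstacle.} The hard part will be Step 1 together with the matching of exponentials in Step 2. On the spectral side, $\Omega\bA^1/\bG_m$ is derived, and the coarse functor kills $\omega_{\Bun_G}$. So every identification must be made at the level of $\IndCoh$ with nilpotent singular support, tracking the degree shifts produced by the trace map to $e[-2]$. I would first check the case $G=T$ a torus. There $\check N$ is trivial and $\LS_{\check N}/\check\rho(\bG_m)$ is $B\bG_m$ mapping to $\LS_{\check T}$ at the trivial local system, and the statement reduces to Fourier--Mukai for $\Bun_T$. Only afterwards would I run the general argument, using that case to fix the normalizations of $\exp^\spec$.
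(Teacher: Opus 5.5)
\textbf{There is a genuine gap: the key identity in Step~1 is false.} Every functor you can build from the family $\mathrm{coeff}_\lambda$, including any Fourier transform or integral in $\lambda$, depends on $\cF$ only through $\mathfrak p^!\cF$, where $\mathfrak p\colon\Bun_N^{\omega^\rho}\to\Bun_G$. But $\mathfrak p$ is far from surjective. For $G=\SL_2$, a bundle containing $\omega_X^{1/2}$ as a subbundle with quotient $\omega_X^{-1/2}$ has no line subbundle of degree $>|g-1|$, so the image of $\mathfrak p$ lies in a quasi-compact open substack. Take a point $x\colon\pt\to\Bun_G$ outside it and set $\cF=x_!e$. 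Then $\cF$ is compact and $\mathfrak p^!\cF=0$ by base change. Yet $\Hom(\cF,\omega_{\Bun_G})\simeq\Hom(e,x^!\omega_{\Bun_G})=e\neq0$. So $p_!$ is not any transform of your family, and there is no ``generic contraction.'' The classical Fourier expansion along $N$ recovers $f$ on $N$-orbits, not $\int_{[G]}f$. Also, $\exp$ is not $\bG_m$-equivariant, so ``integrating against $\exp$ over $\bA^1/\bG_m$'' is not even defined.

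Step~2 has a second, independent problem: it mixes up the two sides of the relative duality. By construction $\mathbb L_G$ matches Whittaker coefficients with $\Gamma(\LS_{\check G},-)$, and the $\lambda=0$ member with constant-term data on $\LS_{\check T}$. It does not match them with $\Sigma^{\IndCoh}_*\iota^!$. The correspondence through $\LS_{\check N}/\check\rho(\bG_m)$ is the \emph{spectral} Whittaker functor. That it matches the automorphic period $p_!$ is Theorem~\ref{intro-thm2}, the dual form of the very statement you are proving, so it cannot be an input.

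What is missing is a way to identify $\cF:=\mathbb L_G^{-1}(\Poinc^\spec)$ without any global formula for $p_!$. The paper does this as follows.
\begin{enumerate}
\item It first proves $\cF$ is lisse. $\Whit^\spec$ kills $\IndCoh_{\Nilp_\irreg}(\LS_{\check G})$ because every element of $f+\check\fb$ is regular; this is a singular-support computation along $\iota^!$ and $\Sigma^{\IndCoh}_*$ (Proposition~\ref{prop:lafforgue-functor-anti-temp}). Hence $\cF\in\Dmod_{\fz(\fg)}(\Bun_G)$ by Theorem~\ref{thm:joakim}. Nilpotence follows from $\LS_{\check N}^{\mathrm{restr}}\simeq\LS_{\check N}$.
\item On such objects, the contraction principle and non-characteristic pullback give $\CT^-_*\simeq(\pp^-\circ\rr)^!$ up to shift (Proposition~\ref{prop:eisenstein-and-delta}).
\item The spectral constant term of $\Poinc^\spec$ is computed by a Bruhat stratification in which $\sing$ kills all non-open strata (Proposition~\ref{prop:spectral-image}).
\item Theorem~\ref{thm:constant-term-compatibility} then yields $(\pp^-\circ\rr)^!\cF\simeq\omega_{\Bun_T}$.
\end{enumerate}
The map $\Bun_T\to\Bun_G$ is not surjective on points either. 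However, on $\Dmod_0(\Bun_G)$ the shifted pullback is t-exact and conservative (Lemma~\ref{lem:conservative-and-t-exact}), and $\Bun_T\to\Bun_G$ is surjective on connected components and on fundamental groups. This forces $\cF\simeq\omega_{\Bun_G}$. The lisse-ness step is exactly what makes detection through a non-surjective map legitimate, and it is what your plan lacks.
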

In fact, we also compute the image of an arbitrary finite-rank local system on $\Bun_G$ in Theorem~\ref{thm:laf1}.
\begin{remark}
    There is a functor in \cite{GLC2} denoted $\Poinc^\spec$ which is also a spectral analog of the Poincar\'e series construction. This has nothing to do with our spectral Poincar\'e sheaf!
\end{remark}

Equivalently, upon dualizing, we obtain a spectral description of the compactly supported de Rham cohomology functor $\cC_{\dR,c}^\bullet(\Bun_G,-)$. We let the \emph{spectral Whittaker functor} $\Whit^\spec$ be the composition
\[
\IndCoh_\Nilp(\LS_{\check G})\xrightarrow{\iota^!}\IndCoh(\LS_{\check N}/\check\rho(\bG_m))\xrightarrow{\Sigma_*^{\IndCoh}}\IndCoh(\Omega\bA^1/\bG_m)\xrightarrow{\sing}\Vect,
\]
where $\sing$ is the quotient by the subcategory $\QCoh(\Omega\bA^1/\bG_m)\subset \IndCoh(\Omega\bA^1/\bG_m)$.
\begin{thm}\label{intro-thm2}
    Suppose $\check\rho$ is a cocharacter. Then (up to a cohomological shift) the following diagram commutes:
    \begin{equation}\label{eq:whittaker-cohomology}
\begin{tikzcd}
    \Dmod(\Bun_G)\arrow{rr}{\mathbb L_G}\arrow[swap]{dr}{\cC^\bullet_{\dR,c}(\Bun_G,-)}&&\IndCoh_\Nilp(\LS_{\check G})\arrow{dl}{\Whit^\spec}\\
    &\Vect.
\end{tikzcd}
\end{equation}
\end{thm}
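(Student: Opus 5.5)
We deduce Theorem~\ref{intro-thm2} from Theorem~\ref{intro-thm1} by duality. The idea is that $\cC^\bullet_{\dR,c}(\Bun_G,-)$ is corepresented by $\omega_{\Bun_G}$, and $\mathbb L_G$ intertwines the relevant dualities.

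\emph{Step 1: corepresent de Rham cohomology.} Since $\Dmod(\Bun_G)$ is compactly generated and self-dual via Verdier duality, for compact $\cF$ we have
\[
\cC^\bullet_{\dR,c}(\Bun_G,\cF)\simeq \Hom(\mathbb D\cF,\omega_{\Bun_G})^\vee .
\]
Equivalently, $\cC^\bullet_{\dR,c}(\Bun_G,-)$ is the pairing with $\omega_{\Bun_G}$ under the self-duality
\[
\langle-,-\rangle\colon \Dmod(\Bun_G)\otimes\Dmod(\Bun_G)\to\Vect, \qquad \langle\cF,\cG\rangle=\cC^\bullet_{\dR,c}(\Bun_G,\cF\overset{!}{\otimes}\cG).
\]
Here we should use the miraculous-duality-compatible (pseudo-identity) version of this pairing, as in GLC. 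For non-compact $\cF$ the formula extends by continuity.

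\emph{Step 2: transport the pairing.} The geometric Langlands equivalence is compatible with dualities. Under $\mathbb L_G$, the (miraculous) self-duality on $\Dmod(\Bun_G)$ matches Serre duality on $\IndCoh_\Nilp(\LS_{\check G})$, up to a shift and the Cartan involution. This is established in \cite{GLC1}. Consequently
\[
\cC^\bullet_{\dR,c}(\Bun_G,\cF)\simeq \langle \mathbb L_G\cF,\ \mathbb L_G(\omega_{\Bun_G})\rangle_{\mathrm{Serre}}[\text{shift}],
\]
and by Theorem~\ref{intro-thm1} the second argument is $\Poinc^\spec=\iota_*^{\IndCoh}\Sigma^!\exp^\spec$. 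The Cartan involution should be harmless, since the defining diagram \eqref{eq:intro-corr} is preserved by it after choosing a compatible pinning.

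\emph{Step 3: adjunctions.} The Serre pairing on $\LS_{\check G}$ satisfies
\[
\langle\cM,\iota^{\IndCoh}_*\cN\rangle=\langle\iota^!\cM,\cN\rangle ,
\]
using properness of $\iota$ (more precisely, representability and properness of $\LS_{\check N}\to\LS_{\check B}\to\LS_{\check G}$ along the relevant nilpotent locus). Similarly
\[
\langle \cM',\Sigma^!\exp^\spec\rangle=\langle\Sigma^{\IndCoh}_*\cM',\exp^\spec\rangle
\]
on $\Omega\bA^1/\bG_m$, which holds by the duality between $!$-pullback and $*$-pushforward. Combining these, the functor in Step 2 becomes $\langle \Sigma_*^{\IndCoh}\iota^!(-),\exp^\spec\rangle$.

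\emph{Step 4: identify pairing with $\exp^\spec$ as $\sing$.} This is the main obstacle. One must show that
\[
\langle-,\exp^\spec\rangle\colon\IndCoh(\Omega\bA^1/\bG_m)\to\Vect
\]
agrees, up to shift, with the quotient functor $\sing$ by $\QCoh$. My plan is to use Koszul duality
\[
\IndCoh(\Omega\bA^1/\bG_m)\simeq \text{graded } e[\beta]\text{-modules},
\]
with $\beta$ in cohomological degree $2$ and $\QCoh$ corresponding to the $\beta$-torsion objects. Under this description, $\sing$ is the localization $M\mapsto M[\beta^{-1}]$, and it lands in $\Vect$ after taking graded pieces via the $\bG_m$-equivariance. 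I would then compute $\exp^\spec$ explicitly from Definition~\ref{def:spectral-exponential}. The expectation is that it corresponds to $e[\beta,\beta^{-1}]$, or its dual, which is exactly the object corepresenting localization. Verifying this, and checking that the pairing is continuous, is where I expect the real work to lie. The remaining shifts are then absorbed into the ``up to cohomological shift'' clause.
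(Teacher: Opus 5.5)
This is essentially the paper's argument. Theorem~\ref{intro-thm2} is the dual of Theorem~\ref{intro-thm1}, obtained as in the proof that Theorem~\ref{conj:lafforgue-Z} implies Theorem~\ref{thm:laf1} from \eqref{eq:miraculous-dual}, Proposition~\ref{lem:duality-intertwined} and Lemma~\ref{lem:serre-duality}. Your Step~4 is the Serre duality between $\exp^\spec$ and $\sing$, which the paper treats as built into the definitions: they are the Koszul transforms of $j_*$ and $j^!$, and $\Kos(\exp^\spec)=e[\beta^{\pm1}]$ holds by Definition~\ref{def:spectral-exponential}. Pairing against it is the tensor product $e[\beta^{\pm1}]\otimes_{e[\beta]}-$, not a Hom out of it.

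Two justifications need repair, although the claims they support are right. In Step~1, Verdier duality identifies $\Dmod(\Bun_G)^\vee$ with $\Dmod(\Bun_G)_{co}$, not with $\Dmod(\Bun_G)$. What you actually need is that $\mathbb D_{\Bun_G,!}$ (kernel $\Delta_!e_{\Bun_G}$) sends the functional $\cC^\bullet_{\dR,c}(\Bun_G,-)$ to the constant sheaf $e_{\Bun_G}=\omega_{\Bun_G}[-2\delta_G]$, which is the base-change computation behind \eqref{eq:miraculous-dual}. In Step~3, no properness of $\iota$ is needed (and $\iota$ need not be proper in general), since $f^{\IndCoh}_*$ and $f^!$ are Serre-dual for any morphism $f$ of QCA stacks.
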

\begin{remark}
    Theorem~\ref{intro-thm1} also holds in other sheaf theories for formal reasons. 
Indeed, the object $\Poinc^\spec$ lies in the category $\IndCoh_\Nilp(\LS_{\check G}^{\mathrm{restr}})$ where $\LS_{\check G}^{\mathrm{restr}}$ is the stack of $\check G$-local systems with restricted variation, in the sense of \cite{AGKRRV} (see the proof of Corollary~\ref{cor:poincare-factors2}). Gaitsgory and Raskin \cite{GLC1} deduce the Betti geometric Langlands equivalence from the de Rham statement via the stack $\LS_{\check G}^{\mathrm{restr}}$, so the formula \eqref{intro-eq:defn-of-poinc-spec} still computes the image of the (Betti) constant sheaf on $\Bun_G$. In the $\ell$-adic setting, Gaitsgory and Raskin \cite{GLC-l-adic} produce a fully faithful embedding of the category of $\ell$-adic sheaves with nilpotent singular support on $\Bun_G$ in positive characteristic into $\operatorname{IndCoh}_{\mathrm{Nilp}}(\LS_{\check G}^{\mathrm{restr}})$. Their construction is via nearby cycles, which simply takes the constant sheaf to the constant sheaf, so \eqref{intro-eq:defn-of-poinc-spec} is still the image of the ($\ell$-adic) constant sheaf on $\Bun_G$.
\end{remark}

\begin{remark}
    When $\check\rho$ is \emph{not} a cocharacter, the natural morphism $\LS_{\check N}\to\LS_{\check G}$ may not factor through $\LS_{\check N}/\check\rho(\bG_m)$. For example, when $\check G=\SL_2$ the stack $\LS_{\check N}/\check\rho(\bG_m)$ classifies, for a test scheme $S$, a line bundle $\cL$ on $S$ together with an extension
    \begin{equation}\label{eq:example-SL2}
    \cL\boxtimes\omega_X\to \cE\to \cO_S\boxtimes\omega_X
    \end{equation}
    in $\QCoh(S)\otimes\Dmod(X)$. There is no morphism to $\LS_{\SL_2}$, since $\det(\cE)\simeq \cL\boxtimes\omega_X$ cannot be trivialized. Later on, we will instead consider the stack $\LS_{\check Z\check N}/\check\rho(\bG_m)$, which classifies a line bundle $\cL$ on $S$, an extension as in \eqref{eq:example-SL2}, \emph{and} a line bundle $\cM\in\QCoh(S)\otimes\Dmod(X)$ with an isomorphism $\cM^{\otimes 2}\simeq \cL\boxtimes\omega_X$. Then $\cM^{-1}\otimes\cE$ has trivial determinant, giving a morphism $\LS_{\check Z\check N}/\check\rho(\bG_m)\to\LS_{\check G}$.  
\end{remark}

\subsection{Relation to relative Langlands duality} In the framework of \cite{relative-langlands}, the dual of $\check G$ acting on a point is $G/(N,\psi)$ where $\psi$ is a non-degenerate character of $N$. For example, by definition, the Langlands functor $\mathbb L_G$ sits in a commutative diagram
\[
\begin{tikzcd}
    \Dmod(\Bun_G)\arrow{rr}{\mathbb L_G}\arrow[swap]{dr}{\mathrm{coeff}}&&\IndCoh_\Nilp(\LS_{\check G})\arrow{dl}{\Gamma(\LS_{\check G},-)}\\
    &\Vect,
\end{tikzcd}
\]
so the global sections functor is intertwined with Whittaker coefficients. Under the philosophy of relative Langlands, the roles of $G$ and $\check G$ may be swapped, so the ``global sections" functor (i.e., compactly supported de Rham cohomology) on the automorphic side is expected to intertwine with a \emph{spectral Whittaker} functor. This is precisely what \eqref{eq:whittaker-cohomology} states. The analogy between the exponential D-module, which is used to define $\mathrm{coeff}$, and the spectral exponential sheaf, which is used to define $\Whit^\spec$, is further discussed in the remark following Definition~\ref{def:spectral-exponential}.

\subsection{Outline of the proof} Let us assume $G$ is semisimple for simplicity. We let $\cF:=\mathbb L_G^{-1}(\Poinc^\spec)$, which is the mystery object we hope to show is equal to the constant sheaf $\omega_{\Bun_G}$ (up to cohomological shifts). The first step will be to check $\cF$ is quasi-lisse, i.e., its singular support lies in the zero-section. By \cite[Theorem~A]{quasi-temp}, this amounts to checking that the dual functor
\[
\Whit^\spec\colon\IndCoh_{\Nilp}(\LS_{\check G})\to\Vect
\]
is zero on the subcategory $\IndCoh_{\Nilp_\irreg}(\LS_{\check G})$. Let $\pp^-\colon\Bun_{B^-}\to\Bun_G$ and $\rr\colon\Bun_T\to\Bun_{B^-}$ be the natural morphisms. Then, in Proposition~\ref{prop:eisenstein-and-delta} we show $\CT_*^-\simeq(\pp^-\circ\rr)^!$ up to cohomological shifts on the subcategory $\Dmod_0(\Bun_G)\subset\Dmod(\Bun_G)$. Moreover, since $\pp^-\circ\rr$ is surjective on both connected components and fundamental groups, to check $\cF\simeq\omega_{\Bun_G}$ it suffices to check $(\pp^-\circ\rr)^!\cF\simeq\CT^-_*\cF\simeq\omega_{\Bun_T}$. By the compatibility of constant term functors on the automorphic and spectral sides from \cite{GLC3}, we reduce to computing $\CT^{-,\spec}(\Poinc^\spec)$. This is computed in Proposition~\ref{prop:spectral-image}.

In \S\ref{sec:preliminaries} we cover preliminaries. In \S\ref{sec:compatibility-of-geometric-langlands-and-isogenies} we prove a compatibility of the geometric Langlands equivalence with isogenies as in \cite[\S5.4]{GLC5}, which will be used later to deduce the image of arbitrary local systems under $\mathbb L_G$ from the computation for the constant sheaf $\omega_{\Bun_G}$. To formulate the compatibility in full generality we prove a variant of geometric Langlands for disconnected tori, which may be of independent interest. In \S\ref{sec:lafforgue-expectation} we use the compatibility proved in \S\ref{sec:compatibility-of-geometric-langlands-and-isogenies} to fully formulate our main theorem, enhancing Lafforgue's original conjecture. In \S\ref{sec:some-computations} we compute the images of all relevant objects under constant term functors, which is the basic computation that allows us to match the automorphic and spectral sides, as outlined above. Finally, in \S\ref{sec:conclusion} we combine our ingredients to prove our main theorem.

\subsection{Acknowledgments} I thank Sam Raskin for introducing me to the problem and patiently explaining the basics of geometric Langlands. He also carefully read an earlier version of this paper and had many helpful comments and corrections. I thank Wyatt Reeves for explaining Koszul duality and singular supports of coherent sheaves. 

\section{Preliminaries}\label{sec:preliminaries}

\subsection{Notation} 
\begin{itemize}
    \item Let $g$ denote the genus of the curve $X$.
    \item For a reductive or unipotent group $H$ over $e$, we let $\delta_H=\dim(\Bun_H)=(g-1)\dim(H)$. 
    \item Let $\check Z$ denote the center of $\check G$.
\item For a finite-dimensional vector space $V$, we let $\Omega V=0\times_V0$, where the fiber product is derived.
\item For a category $\mathcal C$ with an action of $\Rep(\bG_m)$, let $\mathcal C^\shear$ denote the \emph{shearing} or \emph{shift of grading}, described in \cite[\S A.2]{singular-support}, \cite[\S2.1]{spectral-gluing}, and \cite[Definition~6.3.2]{relative-langlands}. 
\end{itemize}

\subsection{Miraculous duality and Serre duality}\label{subsec:duality} 
Miraculous duality \cite{miraculous-duality,compact-generation} is an equivalence\[
\mathbb D_{\Bun_G,!}\colon\Dmod(\Bun_G)^\vee\simeq\Dmod(\Bun_G)
\]
given by the kernel $\Delta_!e_{\Bun_G}\in\Dmod(\Bun_G\times\Bun_G)$.
In other words, given an object $F$ of $\Dmod(\Bun_G)^\vee$, i.e., a colimit-preserving functor $F\colon\Dmod(\Bun_G)\to\Vect$, the miraculous duality functor $\mathbb D_{\Bun_G,!}$ attaches the image of $\Delta_!e_{\Bun_G}$ under the composition
\[
\Dmod(\Bun_G\times\Bun_G)\simeq\Dmod(\Bun_G)\otimes\Dmod(\Bun_G)\xrightarrow{F\otimes\operatorname{Id}}\Dmod(\Bun_G).
\]
On the other hand, the spectral category $\IndCoh_\Nilp(\LS_{\check G})$ is also self-dual by Serre duality:
\begin{align*}
\mathbb D_{\mathrm{Serre}}^{-1}\colon\IndCoh_\Nilp(\LS_{\check G})&\to \IndCoh_\Nilp(\LS_{\check G})^\vee\\
\cF&\mapsto\Gamma(\LS_{\check G},\cF\otimes^!-).
\end{align*}
The two duality functors are compatible up to a Chevalley involution $\tau$ and a cohomological shift; see \cite[\S1.1.8]{kevin}.
\begin{prop}
\label{lem:duality-intertwined}
    There is a commutative diagram
    \[
    \begin{tikzcd}
        \Dmod(\Bun_G)^\vee\arrow{r}{\mathbb D_{\Bun_G,!}}&\Dmod(\Bun_G)\arrow{dr}{\mathbb L_G}\\
        \IndCoh_\Nilp(\LS_{\check G})^\vee\arrow{r}{\mathbb D_{\mathrm{Serre}}}\arrow{u}{\mathbb L_G^\vee}&\IndCoh_\Nilp(\LS_{\check G})\arrow{r}{\tau[d]}&\IndCoh_\Nilp(\LS_{\check G}),
    \end{tikzcd}
    \]
    where $\tau$ denotes the Chevalley involution and $d=4\delta_G+2\delta_N$. 
\end{prop}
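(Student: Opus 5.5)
This is essentially a citation of the compatibility of the geometric Langlands functor with duality, proved in \cite[\S1.1.8]{kevin} (building on \cite{GLC1}). The plan is to reduce the commutativity of the diagram to a statement about a single object, and then to the known compatibility of $\mathbb L_G$ with Whittaker coefficients.

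\textbf{Step 1: reduce to one functor.} All functors in the diagram are equivalences. So it suffices to show that the composite
\[
\mathbb L_G\circ\mathbb D_{\Bun_G,!}\circ\mathbb L_G^\vee\circ\mathbb D_{\mathrm{Serre}}^{-1}\colon\IndCoh_\Nilp(\LS_{\check G})\to\IndCoh_\Nilp(\LS_{\check G})
\]
is isomorphic to $\tau[d]$. Equivalently, I would show that $\mathbb L_G$ intertwines the two pairings:
\[
\langle \mathcal F,\mathcal G\rangle_{\Bun_G}\simeq \Gamma\bigl(\LS_{\check G},\tau(\mathbb L_G\mathcal F)\otimes^!\mathbb L_G\mathcal G\bigr)[\pm d].
\]
Here the left-hand side is the miraculous pairing, namely $!$-pullback along the diagonal followed by $\cC^\bullet_\dR$ against $\Delta_!e$. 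In kernel language, this says that $(\mathbb L_G\otimes\mathbb L_G)(\Delta_!e_{\Bun_G})$ is isomorphic to $(\tau\otimes\mathrm{id})$ applied to the diagonal object $\Delta^{\IndCoh}_*\omega_{\LS_{\check G}}$, up to the shift $[d]$.

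\textbf{Step 2: compatibility with Whittaker coefficients.} The spectral category is generated by objects of the form $\cO$ twisted by the Hecke action and by Eisenstein objects. The automorphic side is generated by the Poincar\'e objects $\Poinc_{!}$ together with Eisenstein series. The defining property $\Gamma(\LS_{\check G},\mathbb L_G(-))\simeq\mathrm{coeff}$ shows that $\mathbb L_G$ sends the vacuum Poincar\'e object to $\cO_{\LS_{\check G}}$, up to a shift of $\delta_N$-type. By Hecke equivariance, both sides of the pairing identity are functorial in $\QCoh(\LS_{\check G})$. The Chevalley involution $\tau$ appears here because the dual of the Hecke action of $V$ is the action of $V^*\simeq\tau^*V$.

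\textbf{Step 3: extend to all objects.} First verify the identity on the tempered (Whittaker-generated) part. Then extend to all of $\IndCoh_\Nilp$ by induction on parabolics, using the compatibility of Eisenstein series with duality and the corresponding statement for Levi subgroups; the case of a torus is explicit Fourier--Mukai.

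\textbf{Main obstacle.} The hardest part is pinning down the exact shift $d=4\delta_G+2\delta_N$ and verifying the non-tempered part. I would not redo this. I would cite \cite[\S1.1.8]{kevin}, which proves precisely this compatibility of miraculous and Serre duality under $\mathbb L_G$, and record the shift bookkeeping. The shift combines the normalization of $\Poinc_!$, which contributes $2\delta_N$, with the miraculous-duality shift of $\Delta_!$ relative to $\Delta_*$, which contributes twice $\dim\Bun_G$ per factor and hence $4\delta_G$.
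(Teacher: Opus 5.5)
Your proposal ends up where the paper does: the paper gives no argument of its own for this proposition and simply cites \cite[\S1.1.8]{kevin}, so deferring to that reference is the same approach. Note, though, that your Steps 2--3 would not stand on their own, for three reasons: they omit the key input, the value of $\mathbb D_{\Bun_G,!}$ on the coefficient functional (that $\mathrm{Ps\text{-}Id}_!$ sends the $*$-Poincar\'e object to a shift of the $!$-Poincar\'e object, which Hecke-equivariance alone cannot supply); once that is known, $\QCoh(\LS_{\check G})$-linearity up to $\tau$, rigidity of $\QCoh(\LS_{\check G})$, and full faithfulness of $\Psi\colon\IndCoh\to\QCoh$ on $\Coh$ should suffice, with no induction on parabolics; and your ``twice $\dim\Bun_G$ per factor'' shift count is not a valid derivation, since the kernel $\Delta_!e_{\Bun_G}$ gives a single $2\delta_G$ and another $2\delta_G$ enters through $\omega_{\LS_{\check G}}\simeq\cO_{\LS_{\check G}}[2\delta_G]$ on the Serre side.
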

We will also use the following general fact from \cite[Proposition~4.4.11]{finiteness}.
\begin{lemma}\label{lem:serre-duality}
    Let $\mathcal X$ and $\mathcal Y$ be QCA algebraic stacks. For any morphism $f\colon\mathcal X\to\mathcal Y$, there is a commutative diagram
    \[
    \begin{tikzcd}
        \IndCoh(\mathcal X)^\vee\arrow{r}{\mathbb D_{\mathrm{Serre}}}\arrow{d}{(f^!)^\vee}&\IndCoh(\mathcal X)\arrow{d}{f_*^{\IndCoh}}\\
        \IndCoh(\mathcal Y)^\vee\arrow{r}{\mathbb D_{\mathrm{Serre}}}&\IndCoh(\mathcal Y).
    \end{tikzcd}
    \]
\end{lemma}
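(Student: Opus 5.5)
The plan is to reduce the commutativity of the square to a single adjunction-type identity between pairings. By definition, $\mathbb D_{\mathrm{Serre}}^{-1}$ sends $\cF\in\IndCoh(\mathcal X)$ to the functional $\langle\cF,-\rangle_{\mathcal X}:=\Gamma(\mathcal X,\cF\otimes^!-)$. Here $\Gamma(\mathcal X,-)=(p_{\mathcal X})^{\IndCoh}_*$ is the continuous $\IndCoh$-pushforward to the point, which is available because $\mathcal X$ is QCA. The functor $(f^!)^\vee$ sends a functional $\phi$ on $\IndCoh(\mathcal X)$ to $\phi\circ f^!$.

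So commutativity of the square is equivalent to a natural isomorphism, for $\cF\in\IndCoh(\mathcal X)$ and $\cG\in\IndCoh(\mathcal Y)$,
\[
\Gamma(\mathcal Y, f^{\IndCoh}_*\cF\otimes^!\cG)\simeq\Gamma(\mathcal X,\cF\otimes^! f^!\cG).
\]
This isomorphism must be functorial in both variables, so that it upgrades to an isomorphism of functors $\IndCoh(\mathcal X)\to\IndCoh(\mathcal Y)^\vee$. Since $\Gamma(\mathcal X,-)\simeq\Gamma(\mathcal Y,-)\circ f_*^{\IndCoh}$ by functoriality of pushforward, it suffices to prove the projection formula
\[
f_*^{\IndCoh}(\cF\otimes^! f^!\cG)\simeq f_*^{\IndCoh}\cF\otimes^!\cG .
\]

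To prove the projection formula, I would write both sides via exterior products and diagonals. First, $\cF\otimes^!f^!\cG\simeq\Gamma_f^!(\cF\boxtimes\cG)$, where $\Gamma_f=(\mathrm{id},f)\colon\mathcal X\to\mathcal X\times\mathcal Y$ is the graph; this uses $\Delta_{\mathcal X}^!\circ(\mathrm{id}\times f)^!\simeq\Gamma_f^!$. Second, the square
\[
\begin{tikzcd}
\mathcal X\arrow{r}{\Gamma_f}\arrow[swap]{d}{f}&\mathcal X\times\mathcal Y\arrow{d}{f\times\mathrm{id}}\\
\mathcal Y\arrow{r}{\Delta_{\mathcal Y}}&\mathcal Y\times\mathcal Y
\end{tikzcd}
\]
is Cartesian. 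Base change for $\IndCoh$ (the $*$-pushforward against $!$-pullback) then gives
\[
f_*^{\IndCoh}\Gamma_f^!\simeq\Delta_{\mathcal Y}^!(f\times\mathrm{id})^{\IndCoh}_*.
\]
Third, $(f\times\mathrm{id})_*^{\IndCoh}(\cF\boxtimes\cG)\simeq f_*^{\IndCoh}\cF\boxtimes\cG$. Combining the three gives the projection formula, and all isomorphisms are visibly natural in $\cF$ and $\cG$.

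The main point requiring care is that these base change and Künneth statements hold for arbitrary (non-representable, non-proper) morphisms between QCA stacks, with the continuous pushforward $f^{\IndCoh}_*$. This is exactly where the QCA hypothesis enters, via Drinfeld–Gaitsgory's results. QCA guarantees that $\IndCoh$ is compactly generated and dualizable, so that $\mathbb D_{\mathrm{Serre}}$ is an equivalence. It also guarantees that $\IndCoh(\mathcal X\times\mathcal Y)\simeq\IndCoh(\mathcal X)\otimes\IndCoh(\mathcal Y)$ and that $f^{\IndCoh}_*$ is continuous and satisfies base change.

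A final bookkeeping step is needed because the natural isomorphism of pairings only identifies $f_*^{\IndCoh}$ with the dual of $f^!$ under the self-dualities. I would check that the self-duality defined by the pairing $\Gamma(-\otimes^!-)$ has, as unit, the object $\Delta_*^{\IndCoh}\omega$. This makes the abstract dual functor agree with the functor read off from the pairing, so the square commutes as stated rather than only up to passing to duals.
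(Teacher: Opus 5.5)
Your argument is correct, but it does not follow the paper, which gives no proof of this lemma and only cites Drinfeld--Gaitsgory \cite[Proposition~4.4.11]{finiteness}. Your route is as follows. Since $(f^!)^\vee$ is precomposition with $f^!$, the square reduces to the pairing identity $\Gamma(\mathcal Y,f_*^{\IndCoh}\cF\otimes^!\cG)\simeq\Gamma(\mathcal X,\cF\otimes^!f^!\cG)$. You obtain this from the $\IndCoh$ projection formula, using the graph/diagonal Cartesian square, base change and K\"unneth. This is a legitimate self-contained derivation that makes the mechanism visible.

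Its one delicate input is base change of $\Delta_{\mathcal Y}^!$ against $(f\times\mathrm{id})_*^{\IndCoh}$ for a possibly non-schematic $f$. This holds for QCA morphisms, but it is a less elementary input than base change along schematic maps, which is built into the definition of $\IndCoh$ on stacks. You can avoid it by comparing units instead of counits. With unit $\Delta_*^{\IndCoh}\omega$ and your graph map $\Gamma_f=(\mathrm{id},f)$, the same Cartesian square gives
\[
(f^!\otimes\mathrm{id})(\Delta_{\mathcal Y,*}^{\IndCoh}\omega_{\mathcal Y})\simeq(\Gamma_f)_*^{\IndCoh}f^!\omega_{\mathcal Y}=(\Gamma_f)_*^{\IndCoh}\omega_{\mathcal X}\simeq(\mathrm{id}\otimes f_*^{\IndCoh})(\Delta_{\mathcal X,*}^{\IndCoh}\omega_{\mathcal X}).
\]
This uses base change only for pushforward along the schematic diagonal, together with the same K\"unneth compatibility of $f_*^{\IndCoh}$ you already need. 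It identifies $f^!$ directly as the dual of $f_*^{\IndCoh}$.

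Your final ``bookkeeping'' step is not actually needed. Once $\mathbb D_{\mathrm{Serre}}^{-1}$ is given by the pairing $\Gamma(-\otimes^!-)$, as in the paper, the natural isomorphism of pairings is literally the commutativity of the square. The identification of the unit with $\Delta_*^{\IndCoh}\omega$ matters only for knowing that this pairing is perfect. That is part of the input from Drinfeld--Gaitsgory, not a discrepancy between the ``abstract'' dual functor and the one read off from the pairing.
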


\subsection{Koszul duality}
Koszul duality, as stated in \cite[Corollary~5.1.10]{singular-support}, is the following.
\begin{lemma}
    Let $V$ be a finite-dimensional vector space over $e$. There is an equivalence
    \begin{align*}
\Kos_V\colon \IndCoh(\Omega V)&\simeq \Sym(V[-2])\operatorname{-mod}\\M&\mapsto \Hom_{\Omega V}(e,M).
\end{align*}
\end{lemma}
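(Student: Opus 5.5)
We write $\Omega V=\operatorname{Spec}(e)\times_V\operatorname{Spec}(e)$. It is the spectrum of the derived ring $\Sym(V^*[1])$, an exterior algebra on $V^*$ placed in degree $-1$.

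The approach is to exhibit a compact generator of $\IndCoh(\Omega V)$, compute its endomorphism algebra, and apply Barr--Beck--Lurie (equivalently, Morita theory for compactly generated categories with a single compact generator).

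\textbf{Generator.} Since $\Omega V$ is a quasi-smooth affine derived scheme with underlying classical scheme a point, $\IndCoh(\Omega V)=\operatorname{Ind}(\Coh(\Omega V))$. Moreover $\Coh(\Omega V)$ consists of objects with bounded, finite-dimensional total cohomology over $\pi_0=e$. Every such object is a finite iterated extension of shifts of the skyscraper $e$, obtained by filtering by cohomological degree and noting that each cohomology module is a finite-dimensional $e$-vector space. Hence $e$ (the pushforward of the structure sheaf of the point along $\pt\to\Omega V$) is a compact generator of $\IndCoh(\Omega V)$. It follows that the functor $M\mapsto\Hom_{\Omega V}(e,M)$ is continuous, conservative, and induces
\[
\IndCoh(\Omega V)\simeq \mathcal{E}nd_{\Omega V}(e)^{\mathrm{op}}\operatorname{-mod}.
\]

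\textbf{Endomorphism algebra.} The remaining task is to identify $\mathcal{E}nd_{\Omega V}(e)$ with $\Sym(V[-2])$ as an $E_1$-algebra. This is the main step. First, $\Omega V$ is the loop group of $V$ at $0$, which gives an action of $\Omega V$ on $\pt$. Equivalently, $\pt\to\Omega V\to\pt$ identifies
\[
\mathcal{E}nd_{\Omega V}(e)\simeq \Hom_{\Sym(V^*[1])}(e,e)=\operatorname{Ext}^*_{\Lambda(V^*)}(e,e).
\]
The classical Koszul duality computation, for example via the Koszul resolution of $e$ over the exterior algebra, gives $\Sym(V[-2])$ as a graded algebra. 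Because $\Sym(V^*[1])$ is a free commutative algebra with zero differential, the dual algebra is formal. Concretely, one can realize $\Omega V=\pt\times_V\pt$ so that $\Hom(e,e)\simeq\Gamma(\pt\times_{\pt\times_V\pt}\pt,\cO)^{\vee}$-type expressions reduce to $\Sym$ of $V[-2]$. Commutativity also follows from the $E_2$-structure coming from the double-loop interpretation $\pt\times_{\Omega V}\pt=\Omega^2V$.

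To nail this down, I would cite Arinkin--Gaitsgory, who prove exactly this statement for the Koszul dual of $\Omega V$. Concretely, the map $V^*\to$ degree $-1$ generators dualizes to $V\to\operatorname{Ext}^2(e,e)$, and freeness of $\Sym(V[-2])$ gives the algebra map. This map is an isomorphism because it is one on cohomology.

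\textbf{Conclusion.} Since $\Sym(V[-2])$ is commutative, the ${}^{\mathrm{op}}$ is harmless, and the equivalence is the functor $M\mapsto\Hom_{\Omega V}(e,M)$ as stated. The main obstacle is the $E_1$-formality of the Ext algebra, which I would handle using the commutativity and freeness argument above.
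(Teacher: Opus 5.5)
Your reduction is the standard one and is correct. Every object of $\Coh(\Omega V)$ is a finite extension of shifts of $e$, so $e$ is a compact generator. Hence $\Hom_{\Omega V}(e,-)$ identifies $\IndCoh(\Omega V)$ with modules over $\operatorname{End}_{\Omega V}(e)\simeq\operatorname{End}_{A}(e)$, where $A=\Sym(V^*[1])$, and the Yoneda algebra $\operatorname{Ext}^*_A(e,e)$ is $\Sym(V[-2])$ as a graded algebra. The paper gives no argument of its own here; it simply cites \cite[Corollary~5.1.10]{singular-support}, so your fallback citation coincides with it.

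The gap is the step you flag as the main obstacle: upgrading this to an equivalence of $E_1$-algebras $\operatorname{End}(e)\simeq\Sym(V[-2])$. The reasons you give do not achieve it.
\begin{itemize}
\item $\Sym(V[-2])$ is free only among commutative algebras. Among $E_1$-algebras the free object on $V[-2]$ is the tensor algebra, so choosing cocycles $V[-2]\to\operatorname{End}(e)$ only yields $T(V[-2])\to\operatorname{End}(e)$. Factoring this through $\Sym(V[-2])$ needs a coherently commutative structure on the composition algebra $\operatorname{End}(e)$, which it does not come with.
\item The $E_2$-structure from $\Omega^2V$ gives only homotopy commutativity; the free $E_2$-algebra on $V[-2]$ is much larger than $\Sym(V[-2])$. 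Until an algebra map actually exists, ``isomorphism on cohomology'' has nothing to apply to.
\item ``Zero differential on $A$ implies the dual is formal'' is not a mechanism. For instance, $e[x]/(x^3)$ has zero differential, yet its $\operatorname{Ext}(e,e)$ has a nonzero Massey product $\langle\xi,\xi,\xi\rangle$.
\end{itemize}

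Either of the following closes the gap.

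(a) Take the Koszul--Tate resolution $K=A[y_1,\dots,y_r]$ with $|y_i|=-2$ and $dy_i=x_i$, for a basis $(x_i)$ of $V^*[1]$. The operators $\partial/\partial y_i$ are $A$-linear of degree $2$ and commute with $d=\sum_i x_i\,\partial/\partial y_i$ and with one another. They therefore define a strict dg-algebra map $\Sym(V[-2])\to\operatorname{End}_A(K)\simeq\operatorname{End}_A(e)$. Compose with the quasi-isomorphism $\operatorname{End}_A(K)\to\Hom_A(K,e)=e[y_1,\dots,y_r]^\vee$. This composite sends $\partial^\alpha$ to $\alpha!$ times the functional dual to $y^\alpha$, so it is a quasi-isomorphism in characteristic zero.

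(b) Alternatively, use purity. The scaling $\bG_m$-action on $V$ grades $\operatorname{End}_A(e)$ so that $\operatorname{Ext}^{2k}=\Sym^kV$ is pure of weight $k$. In a weight-graded minimal $A_\infty$-model, $m_n$ has degree $2-n$ and weight $0$, hence vanishes for $n\neq2$. So $\operatorname{End}_A(e)$ is formal.
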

In the $\bG_m$-equivariant setting, we have the following, which is essentially the definition of coherent singular support.
\begin{lemma}[{\cite[Lemma~4.3.1]{tempered-d-modules}}]
    Let $V$ be a finite-dimensional vector space over $e$. For a $\bG_m$-equivariant closed subscheme $\Lambda\subset V^\vee$ there is an equivalence
\[
\IndCoh_\Lambda(\Omega V/\bG_m)\simeq\IndCoh((V^\vee)^\land_\Lambda/\bG_m)^\shear.
\]
\end{lemma}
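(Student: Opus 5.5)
The plan is to deduce the $\bG_m$-equivariant statement from the non-equivariant Koszul duality $\Kos_V$ together with the definition of coherent singular support, and then account for the grading. First I will handle the case $\Lambda=V^\vee$, i.e. all of $\IndCoh(\Omega V/\bG_m)$. The functor $M\mapsto\Hom_{\Omega V}(e,M)$ is $\bG_m$-equivariant, where $\bG_m$ acts on $V$ and hence on $\Omega V$, and on $\Sym(V[-2])$ through its weight grading. Passing to $\bG_m$-equivariant objects, i.e. taking invariants for the $\Rep(\bG_m)$-actions on both sides, gives
\[
\IndCoh(\Omega V/\bG_m)\simeq \Sym(V[-2])\operatorname{-mod}(\Rep(\bG_m)).
\]
Here the generator $V$ sits in cohomological degree $2$ and in $\bG_m$-weight $1$. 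Shearing, in the sense of \cite[\S A.2]{singular-support}, moves the cohomological shift into the grading. This identifies graded $\Sym(V[-2])$-modules with $(\Sym(V)\operatorname{-mod}^{\mathrm{gr}})^\shear$, and $\Sym(V)\operatorname{-mod}^{\mathrm{gr}}=\QCoh(V^\vee/\bG_m)$. Since $V^\vee$ is smooth, this equals $\IndCoh(V^\vee/\bG_m)$. This is the case $\Lambda=V^\vee$, where the completion is all of $V^\vee$.

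For general conical $\Lambda$, I will recall that the singular support of $M\in\IndCoh(\Omega V)$ is, by definition in \cite{singular-support}, the support of $\Kos_V(M)$ viewed as a module over $\Sym(V[-2])$, i.e. over $\cO_{V^\vee}$ up to grading. So $\IndCoh_\Lambda(\Omega V/\bG_m)$ is the full subcategory of objects whose image is set-theoretically supported on $\Lambda$. On the quasi-coherent side, the subcategory of $\QCoh(V^\vee/\bG_m)$ supported on $\Lambda$ is identified with $\IndCoh((V^\vee)^\land_\Lambda/\bG_m)$. This holds because for a closed subset of a smooth scheme, sheaves set-theoretically supported on it agree with (ind-)coherent sheaves on the formal completion. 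Shearing commutes with passing to the full subcategory defined by a $\bG_m$-stable support condition, since shearing is compatible with the $\Rep(\bG_m)$-linear structure and preserves supports.

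I expect the main obstacle to be the bookkeeping of the shearing. I need to check that the equivariant Koszul functor really lands in the sheared category with the correct sign and weight conventions. I also need to check that the support condition survives shearing: being supported on a conical $\Lambda$ is tested by vanishing after localization away from homogeneous ideals, which is insensitive to the regrading. I would first verify everything on the generator $\cO_{\Omega V}$, which is sent to the augmentation module $e$, i.e. the skyscraper at $0$, and on $e\in\IndCoh(\Omega V)$, which is sent to $\Sym(V[-2])$ itself. Then I would use compact generation of both sides to conclude.
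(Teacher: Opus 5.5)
Your argument is correct and follows the route the paper has in mind. The paper gives no proof of its own: it cites \cite[Lemma~4.3.1]{tempered-d-modules} and remarks that the statement is essentially the definition of coherent singular support. Your steps unpack exactly that remark: equivariant Koszul duality, shearing $\Sym(V[-2])$ to $\Sym(V)=\cO(V^\vee)$, matching the localization test for support at homogeneous $f\in I_\Lambda$ on both sides, and identifying sheaves supported on $\Lambda$ with $\IndCoh$ of the formal completion.

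Two small points. The one input beyond definitions is that $\Kos_V$ intertwines the Hochschild-cohomology action that defines singular support with the $\Sym(V[-2])$-module structure; this is part of the Koszul duality discussion in \cite{singular-support}, and you should cite it rather than call it a definition. Also, $\cO_{\Omega V}$ is not a generator of $\IndCoh(\Omega V)$: it generates only $\IndCoh_{\{0\}}(\Omega V)=\QCoh(\Omega V)$, and the compact generator of $\IndCoh(\Omega V)$ is $e$.
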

\begin{remark}
    In particular, when $\Lambda=V^\vee$ we see
$\IndCoh(\Omega V/\bG_m)\simeq\IndCoh(V^\vee/\bG_m)^\shear$.
\end{remark}
Let us also recall the following general fact about Koszul duality:
\begin{lemma}\label{lem:koszul-commutative}
    Given a linear map $f\colon V\to W$ with dual $f^\vee\colon W^\vee\to V^\vee$, there is a commutative diagram
\[\begin{tikzcd}
    \IndCoh(\Omega W/\bG_m)\arrow{d}{\Kos_W}\arrow{r}{\Omega f^!}&\IndCoh(\Omega V/\bG_m)\arrow{d}{\Kos_V}\\
    \IndCoh(W^\vee/\bG_m)^\shear\arrow{r}{(f^\vee)_*^\shear}&\IndCoh(V^\vee/\bG_m)^\shear.
\end{tikzcd}\]
\end{lemma}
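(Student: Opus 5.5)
We have to prove Lemma~\ref{lem:koszul-commutative}: Koszul duality intertwines $\Omega f^!$ with $(f^\vee)_*^\shear$. The strategy is to write both composites as functors to modules over $\Sym(V[-2])$ and compare them on a generator.

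The first step is to make the functors explicit. Under $\Kos_W$ and $\Kos_V$, an object $M\in\IndCoh(\Omega W/\bG_m)$ goes to the graded module $\Hom_{\Omega W}(e,M)$ over $\Sym(W[-2])$. Here $e$ is the pushforward of the structure sheaf along the point $0\to\Omega W$, and the $\bG_m$-weights are sheared. On the coherent side, $(f^\vee)_*$ from $W^\vee$ to $V^\vee$ is restriction of scalars along the algebra map $\Sym(V)\to\Sym(W)$ induced by $f$. After shearing, this becomes restriction along $\Sym(V[-2])\to\Sym(W[-2])$.

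So the lemma reduces to producing a natural isomorphism of $\Sym(V[-2])$-modules
\[
\Hom_{\Omega V}(e,\Omega f^!M)\simeq \Hom_{\Omega W}(e,M),
\]
where the right side is viewed as a $\Sym(V[-2])$-module by restriction.

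The second step proves this isomorphism by adjunction. The point $0\to\Omega V$ composed with $\Omega f\colon\Omega V\to\Omega W$ is the point $0\to\Omega W$, so pushforward of $e$ along $\Omega f$ is $e$. The map $\Omega f$ is proper, being a base change of the closed immersion-type maps between derived self-intersections. Hence $(\Omega f)_*^{\IndCoh}$ is left adjoint to $\Omega f^!$, and
\[
\Hom_{\Omega V}(e,\Omega f^!M)\simeq\Hom_{\Omega W}((\Omega f)_*e,M)=\Hom_{\Omega W}(e,M).
\]
This isomorphism is $\bG_m$-equivariant because every map involved is.

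The third step, which I expect to be the main obstacle, is module compatibility. The $\Sym(V[-2])$-action on the left side comes from $\operatorname{End}_{\Omega V}(e)\simeq\Sym(V[-2])$ acting on $e$. The action on the right comes from $\operatorname{End}_{\Omega W}(e)\simeq\Sym(W[-2])$, restricted along $f$. So I must check that the map of endomorphism algebras
\[
\operatorname{End}_{\Omega V}(e)\to\operatorname{End}_{\Omega W}((\Omega f)_*e)
\]
induced by $(\Omega f)_*$ is $\Sym(f[-2])$.

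To check this, I would use the group structure: $\Omega V$ is the group $\Omega V$ acting on a point, so $e$ corresponds to the regular representation. Its endomorphism algebra is $\cO(B\Omega V)=\cO(V^\wedge_0[1])$, and the identification with $\Sym(V[-2])$ is functorial in $V$ because it is computed by the linear Koszul complex. Since $\Omega f$ is a group homomorphism, functoriality gives the required compatibility, first on degree-$2$ generators and then everywhere by multiplicativity.

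Naturality in $M$ then yields a natural isomorphism of the two functors. Both are continuous and the source is compactly generated, so this gives commutativity of the diagram of categories.
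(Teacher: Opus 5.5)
Your proposal is correct and follows the paper's argument. The paper uses the adjunction $(\Omega f)^{\IndCoh}_*\dashv\Omega f^!$ (the cleanest justification is that $\Omega f$ is a closed immersion, since its classical truncation is $\pt\to\pt$), the identification $(\Omega f)^{\IndCoh}_*e\simeq e$, and the observation that regarding the resulting $\Sym(W[-2])$-module as a $\Sym(V[-2])$-module is $(f^\vee)_*$; your step 3 spells out the module compatibility the paper leaves implicit.

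One small correction in that step: $\operatorname{End}_{\Omega V}(e)\simeq\Sym(V[-2])$ is not $\cO(B\Omega V)$, which is $\cO(V^\wedge_0)$, a completed polynomial algebra in degree $0$. The correct justification is the one you also give, namely functoriality in $V$ of the Koszul resolution of $e$ over $\cO(\Omega V)=\Sym(V^\vee[1])$.
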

\begin{proof}Indeed, for $M\in\IndCoh(\Omega W)$,
\[
    \Kos_V\circ\Omega f^!(M)\simeq \Hom_{\Omega V}(e,\Omega f^! M)\simeq \Hom_{\Omega W}(\Omega f_*^{\IndCoh}e,M)\simeq \Hom_{\Omega W}(e,M).
\]
This is a $\Sym(W[-2])$-module, but we only consider it as a $\Sym(V[-2])$-module, which amounts to the pushforward $(f^\vee)_*^{\IndCoh}$.
\end{proof}

Note that there is an exact sequence of categories
\[
\IndCoh((\bA^1)^\land_0/\bG_m)\xrightarrow{i_*}\IndCoh(\bA^1/\bG_m)\xrightarrow{j^!}\Vect
\]
corresponding to the closed and open substacks $i\colon 0/\bG_m\hookrightarrow\bA^1/\bG_m\hookleftarrow\mathrm{pt}: \!j$. Upon shearing and Koszul duality, this gives an exact sequence
\[
\QCoh(\Omega\bA^1/\bG_m)\to\IndCoh(\Omega\bA^1/\bG_m)\xrightarrow{(j^!)^\shear}\Vect.
\]
We are particularly interested in the following object of $\IndCoh(\Omega\bA^1/\bG_m)$; see also \cite[\S{}A.2]{relative-langlands}.
\begin{defn}\label{def:spectral-exponential}
    Let $j\colon\mathrm{pt}\hookrightarrow\bA^1/\bG_m$ be the open immersion. Then the \emph{spectral exponential sheaf} $\exp^\spec\in\IndCoh(\Omega\bA^1/\bG_m)$ is the image of $j_*^\shear(e)\in\IndCoh(\bA^1/\bG_m)^\shear$ under Koszul duality.
\end{defn}
The spectral exponential sheaf can alternatively be described as follows. The object $e\in\Coh(\Omega\bA^1/\bG_m)$ lies in an exact triangle $e(1)[1]\to\cO_{\Omega\bA^1/\bG_m}\to e$, which upon rotation gives a morphism $\xi\colon e\to e(1)[2]$. Then,
\[
\exp^\spec=\operatorname{colim}\big(e\xrightarrow\xi e(1)[2]\xrightarrow\xi e(2)[4]\xrightarrow\xi\cdots\big).
\]
\begin{remark}
    $\exp^\spec$ is the unique nontrivial multiplicative ind-coherent sheaf on $\Omega\bA^1/\bG_m$, in analogy with the exponential D-module on $\bA^1$. In other words, the only objects $A\in\IndCoh(\Omega\bA^1/\bG_m)$ with compatible isomorphisms
    \[+^!A\simeq A\boxtimes A\in\IndCoh(\Omega\bA^2/\bG_m),\ 
    0^!A\simeq\cO_{B\bG_m}\in\IndCoh(B\bG_m)\]
are $\omega_{\Omega\bA^1/\bG_m}$ and $\exp^\spec$. Indeed, under Koszul duality such an object corresponds to an idempotent graded $e[\xi]$-algebra, where $|\xi|=2$. They are classified by \cite{localizing}.
\end{remark}
\begin{defn}
    Denote the functor $(j^!)^\shear\colon\IndCoh(\Omega\bA^1/\bG_m)\to\Vect$ by $\sing$, which we call the \emph{singularity functor}.
\end{defn}
In the language of \cite[\S H.2]{singular-support}, the functor $(j^!)^\shear$ presents $\Vect$ as the category of singularities of $\Omega\bA^1/\bG_m$.

More generally, for a prestack $\mathcal Y$ with a $\bG_m$-action, we let $\sing_{\mathcal Y}$ denote the composition
\begin{equation}\label{eq:singularity-functor}
\IndCoh\!\big((\mathcal Y\times\Omega\bA^1)/\bG_m\big)\simeq\IndCoh\!\big((\mathcal Y\times\bA^1)/\bG_m\big)^\shear\xrightarrow{(j^!)^\shear}\IndCoh(\mathcal Y).
\end{equation}
Dually, we let $\exp^\spec_{\mathcal Y}$ denote the composition
\begin{equation}\label{eq:singularity-functor2}
\IndCoh(\mathcal Y)\xrightarrow{(j_*)^\shear}\IndCoh\!\big((\mathcal Y\times\bA^1)/\bG_m\big)^\shear\simeq
\IndCoh\!\big((\mathcal Y\times\Omega\bA^1)/\bG_m\big).
\end{equation}

\subsection{Singular support and non-characteristic morphisms}
We review the notion of the singular support of D-modules on a smooth algebraic stack $\cY$ as defined in \cite[Appendix~\S{}F.6.3]{AGKRRV} (see also \cite[\S2.2.3]{quasi-temp}). We take as given the category $\Dmod_\cN(S)$ of D-modules on $S$ with singular support in $\cN\subset T^*S$ when $S$ is a smooth scheme of finite type. Now, fix a Zariski-closed conical subset $\cN\subset T^*\cY$.

For each smooth morphism $f\colon S\to\cY$ from a smooth scheme of finite type, let $\cN_S$ be the image of $\cN$ under the co-differential \[df^\bullet\colon T^*\cY\times_\cY S\to T^*S.\] Then we define the category $\Dmod_\cN(\cY)$ of D-modules with singular support in $\cN$ as
\[
\Dmod_\cN(\cY):=\lim_S\Dmod_{\cN_S}(S)
\]
where the limit is over finite-type smooth schemes and the transition maps are $!$-pullbacks. 
As in \cite[Definition~2.4.2]{HTT}, we define the notion of a non-characteristic morphism.
\begin{defn}\label{defn:non-char}
    Let $f\colon\mathcal X\to\mathcal Y$ be a morphism between smooth algebraic stacks, and let $\cN\subset T^*\cY$. We let
    \[
    T^*_{\mathcal X}\cY:=(df^\bullet)^{-1}(T_{\mathcal X}^*\mathcal X)\subset T^*\mathcal Y\times_{\mathcal Y}\mathcal X.
    \]
    Then $f$ is \emph{non-characteristic} with respect to $\cN$ if 
    \[
    (\cN\times_\cY\mathcal X)\cap T^*_{\mathcal X}\cY\subset T^*_\cY\cY\times_\cY\mathcal X.
    \]
\end{defn}
\begin{remark}
    Definition~\ref{defn:non-char} slightly differs from \cite[Definition~2.4.2]{HTT}; for them, a morphism $f$ is non-characteristic with respect to a coherent D-module $\cF$ on $\cY$ when $f$ is non-characteristic with respect to the singular support $\operatorname{SS}(\cF)\subset T^*\cY$ in our sense. We choose to define non-characteristic morphisms in terms of $\cN$ instead because we \emph{do not} define the singular support of a D-module; we only define what it means for a D-module $\cF$ to lie in the subcategory $\Dmod_\cN(\cY)$.
\end{remark}
We then have the following generalization of \cite[Lemma~2.2.5.1]{quasi-temp} to non-smooth morphisms.
\begin{lemma}\label{lem:SS-functorial}
    Let $f\colon\mathcal X\to\mathcal Y$ be a morphism between smooth algebraic stacks which is non-characteristic with respect to a Zariski-closed conical subset $\cN_\cY\subset T^*\cY$. Let $\cN_{\mathcal X}$ be the Zariski closure of the image of $\cN_\cY\times_\cY\mathcal X$ under the co-differential $df^\bullet$. Then the functor $f^!\colon\Dmod(\cY)\to\Dmod(\mathcal X)$ restricts to a functor
    \[
    f^!\colon\Dmod_{\cN_\cY}(\cY)\to\Dmod_{\cN_{\mathcal X}}(\mathcal X).
    \]
    Furthermore, in this situation, $f^!\simeq f^*[2\operatorname{dim.rel.}(f)]$, where the relative dimension $\operatorname{dim.rel.}(f)$ is considered a locally constant function on $\cY$.
\end{lemma}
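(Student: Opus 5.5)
Both claims are local on $\mathcal X$ and $\mathcal Y$ in the smooth topology, so I first reduce to smooth schemes of finite type and then invoke the classical non-characteristic theory of \cite[\S2.4]{HTT}.

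\emph{Reduction to schemes.} Take smooth atlases $S\to\mathcal Y$ and $S'\to\mathcal X\times_{\mathcal Y}S$, with $S'$ a smooth scheme. This gives a square whose horizontal arrows are smooth:
\[
\begin{tikzcd}
S'\arrow{r}{g'}\arrow{d}{h} & \mathcal X\arrow{d}{f}\\
S\arrow{r}{g} & \mathcal Y
\end{tikzcd}
\]
Here $f\circ g'=g\circ h$. For a smooth map $g$, the co-differential $dg^\bullet$ is injective, so it identifies $T^*\mathcal Y\times_{\mathcal Y}S$ with a subbundle of $T^*S$. Using this, one checks that $h$ is non-characteristic with respect to $\cN_S$ whenever $f$ is non-characteristic with respect to $\cN_{\mathcal Y}$. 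The point is that a covector on $S$ killed by $dh^\bullet$ and lying in $\cN_S$ comes from $\cN_{\mathcal Y}$. It is killed by $dh^\bullet$, hence also by $d(g')^\bullet\circ df^\bullet$. Since $d(g')^\bullet$ is injective, it is killed by $df^\bullet$, so by the hypothesis it is zero.

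By definition, $\Dmod_{\cN}$ is the limit of the categories $\Dmod_{\cN_S}(S)$ under $!$-pullbacks, and $(g')^!h^!$ agrees with $f^!g^!$. It therefore suffices to show two things. First, $h^!$ sends $\Dmod_{\cN_S}(S)$ to $\Dmod_{(\cN_{\mathcal X})_{S'}}(S')$. Second, the image of $(\cN_{\mathcal Y})_S$ under $dh^\bullet$ lies in $(\cN_{\mathcal X})_{S'}$. The second follows from the chain rule $d(g')^\bullet\circ df^\bullet=dh^\bullet\circ dg^\bullet$. Finally, $\Dmod_\cN$ is defined using Zariski-closed conical sets, so passing to the Zariski closure is harmless.

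\emph{The scheme case.} Now let $h\colon S'\to S$ be a morphism of smooth schemes, non-characteristic for $\cN$. Factor $h$ as the graph embedding $S'\hookrightarrow S'\times S$ followed by the smooth projection to $S$. For the smooth projection, both statements are standard, and $p^!\simeq p^*[2\dim]$. Non-characteristicity for the graph embedding is inherited from that of $h$.

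The closed embedding is the main step. For a coherent D-module $M$ with $\operatorname{SS}(M)\subset\cN$ and $i$ non-characteristic, \cite[Theorem~2.4.6]{HTT} gives three facts:
\begin{itemize}
\item $i^*M$ is concentrated in one degree;
\item $i^*M$ is coherent with $\operatorname{SS}(i^*M)\subset di^\bullet(\cN)$;
\item $i^!\simeq i^*[2\operatorname{dim.rel.}]$ on $M$.
\end{itemize}
The category $\Dmod_\cN(S)$ is compactly generated by coherent objects with singular support in $\cN$. Since $i^!$ commutes with colimits, it preserves $\Dmod_\cN$ into the target category, which is closed under colimits. The isomorphism $i^!\simeq i^*[2\operatorname{dim.rel.}]$ also extends to colimits, provided the natural transformation between them is defined globally. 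I would take it to be the one induced by the base-change, or fundamental class, map.

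I expect the main obstacle to be making this natural transformation canonical and compatible with the smooth descent. Otherwise the scheme-level isomorphisms $f^!\simeq f^*[2\operatorname{dim.rel.}]$ would not glue to stacks. I would construct the comparison map uniformly from the deformation to the normal cone, or from the relative fundamental class of $f$. Then it suffices to check that this map is an isomorphism, and being an isomorphism can be checked smooth-locally, where the HTT result applies.
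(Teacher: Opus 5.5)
Your skeleton matches the paper's proof. You reduce smooth-locally to smooth schemes of finite type. You then invoke \cite[Theorem~2.4.6(iii)]{HTT} for the singular-support bound and \cite[Theorem~2.7.1]{HTT} (non-characteristic pullback commutes with duality) for $f^!\simeq f^*[2\operatorname{dim.rel.}(f)]$. Your check that non-characteristicity passes to the chart map $h$, via injectivity of $d(g')^\bullet$ for smooth $g'$, is correct. The graph factorization is harmless but unnecessary, since HTT treats arbitrary morphisms of smooth varieties. Also, the third of your ``three facts'' is Theorem~2.7.1 rather than 2.4.6.

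The step that does not hold up is your assertion that $\Dmod_\cN(S)$ is compactly generated by coherent objects with singular support in $\cN$. You give no argument, and the natural one stops short. Every object of $\Dmod_\cN(S)^\heartsuit$ is the filtered union of its coherent submodules, so the colimit closure of $\Dmod_\cN(S)\cap\Dmod(S)^c$ contains all bounded-below objects of $\Dmod_\cN(S)$. But an unbounded-below object is a limit, not a colimit, of its truncations. Compact generation of $\Dmod_\cN$ for a general conical $\cN$ is not a fact you can simply quote.

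For the first assertion you can bypass this entirely. By HTT, $h^!M$ is concentrated in a single cohomological degree for coherent $M$ with $\operatorname{SS}(M)\subset\cN$. Since $h^!$ is continuous and filtered colimits are t-exact, the same holds for every $M\in\Dmod_\cN(S)^\heartsuit$. The resulting module is a filtered colimit of modules with singular support in $dh^\bullet(\cN)$, so it lies in the heart of the target category. Moreover, $h^!$ is (up to shift) the continuous, right t-exact $\cO$-module pullback. So each cohomology of $h^!\cF$ only sees a bounded-below truncation of $\cF$, and induction on amplitude shows $h^!$ is t-exact up to a fixed shift on all of $\Dmod_\cN(S)$, with the required singular support.

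For the second assertion, note that $f^*$ is only the partially defined left adjoint of $f_*$, so the comparison map needs no deformation to the normal cone. For coherent non-characteristic $M$ one has $f^*M\simeq\mathbb D f^!\mathbb D M$, which \cite[Theorem~2.7.1]{HTT} identifies naturally with $f^!M[-2\operatorname{dim.rel.}(f)]$. This identification extends to colimits of such objects, hence to all bounded-below objects of $\Dmod_\cN$. Going beyond that really does require a generation statement like the one you asserted. The paper's own proof only reduces to schemes and cites HTT, so it is silent on this point as well.
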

\begin{proof}
    As in the proof of \cite[Lemma~2.2.5.1]{quasi-temp} this reduces to the analogous statement for smooth schemes of finite type. This is \cite[Theorem~2.4.6(iii)]{HTT} and \cite[Theorem~2.7.1]{HTT}.
\end{proof}

\subsection{Maximally anti-tempered D-modules}\label{subsec:joakim-result} 
Let $LG$ be the loop group of $G$ and let $L^+G$ be the positive loop group. Recall that the derived Satake equivalence describes the category
\[
\Sph_G:=\Dmod(L^+G\backslash LG/L^+G),
\]
which carries the convolution monoidal structure.
\begin{prop}[{\cite[\S12]{singular-support}, \cite[Theorem~5]{derived-satake}}]
    There is a monoidal equivalence of categories
    \[
    \Sat_G\colon\Sph_G\simeq\IndCoh_{\check\cN/\check G}(\Omega\check\fg/\check G)
    \]
    where $\check\cN\subset\check\fg$ is the nilpotent cone (viewed as a subspace of $\check\fg^*$ by an invariant form) and $\Omega\check\fg=0\times_{\check\fg}0$.
\end{prop}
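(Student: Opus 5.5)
This statement is imported from \cite[\S12]{singular-support} and \cite[Theorem~5]{derived-satake}, so in the paper it is simply assumed. If I had to prove it myself, I would follow the Bezrukavnikov--Finkelberg and Arinkin--Gaitsgory strategy: first identify the derived endomorphisms of a generating family of spherical sheaves with an explicit graded algebra, then pass to ind-completions and identify the correct class of compact objects on the spectral side.

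\emph{Step 1 (abelian Satake and generators).} Start from the geometric Satake equivalence of Mirković--Vilonen. It identifies the abelian category of $L^+G$-equivariant perverse sheaves on $\mathrm{Gr}_G$, with its convolution product, with $\Rep(\check G)$; write $\mathrm{IC}_\lambda$ for the object corresponding to $V^\lambda$. The objects $\mathrm{IC}_\lambda$ generate the compact part of $\Sph_G$ under finite colimits, shifts and retracts. The candidate functor sends $\mathrm{IC}_\lambda$ to $\cO_{\Omega\check\fg/\check G}\otimes V^\lambda$. Under Koszul duality, i.e.\ the $\check G$-equivariant version of the lemmas on $\IndCoh(\Omega V/\bG_m)$ above applied to $V=\check\fg$, this object corresponds to $\Sym(\check\fg[-2])\otimes V^\lambda$ as an equivariant module.

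\emph{Step 2 (Ext computation and formality).} The core computation is Ginzburg's: there should be a natural, convolution-compatible isomorphism
\[
\Hom^\bullet_{\Sph_G}(\mathrm{IC}_\lambda,\mathrm{IC}_\mu)\simeq\big(V^{\lambda,*}\otimes V^\mu\otimes\Sym(\check\fg[-2])\big)^{\check G}.
\]
I would derive it from three inputs: the equivariant cohomology $H^\bullet_{L^+G}(\mathrm{Gr}_G)$; the action of $H^\bullet(\mathrm{Gr}_G)$ through a principal nilpotent $e\in\check\fg$; and Kostant's description of $\check G$-invariants via $\Sym(\check\fg)^{\check G}$ and the Kostant slice. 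Next I would prove formality of the resulting dg-category, using purity (Frobenius weights, or Hodge weights in characteristic zero): each $\Hom$ space is pure, and its cohomological degree equals its weight. Together these give a monoidal equivalence between the compact objects of $\Sph_G$ and the full subcategory of $\check G$-equivariant $\Sym(\check\fg[-2])$-modules generated by free modules twisted by representations.

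\emph{Step 3 (identifying the ind-completion).} Steps 1 and 2 identify the compact objects of $\Sph_G$ with the subcategory of $\Coh(\Omega\check\fg/\check G)$ generated by $\cO\otimes V^\lambda$. The remaining task is to show that this subcategory is exactly the category of coherent sheaves whose singular support lies in $\check\cN/\check G$. Equivalently, on the Koszul dual side, these are the modules over $\Sym(\check\fg[-2])$ that are set-theoretically supported on the nilpotent cone and built from equivariant free modules.

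I expect this step to be the main obstacle; it is precisely the renormalization issue that forces $\IndCoh$ rather than $\QCoh$. I would argue as follows.
\begin{itemize}
\item The generators are supported on $\check\cN$: $\Sym(\check\fg[-2])^{\check G}$ acts nilpotently on each $\Hom$ space, since that action factors through $H^\bullet(B\check G)$ acting on finite-dimensional stalks.
\item Conversely, the generators reach every nilpotent-supported coherent object. For this, use that $\check\cN/\check G$ has finitely many orbits and resolve each orbit's structure sheaf by Koszul complexes built from $V^\lambda$'s. Alternatively, invoke the general criterion of \cite{singular-support} that $\Coh_{\Lambda}$ is generated by objects $\cO\otimes V$ whenever $\Lambda$ is conical and $\check G$-stable.
\end{itemize}
Finally I would check that the convolution monoidal structure matches the $\otimes$ product on $\IndCoh$. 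It suffices to do this on generators, where it follows from Steps 1 and 2.
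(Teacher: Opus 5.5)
\emph{Comparison with the paper.} The paper gives no proof and simply imports this result. Measured against the cited argument, your proposal has a genuine gap: it conflates compact objects with locally compact ones. This is exactly the distinction that produces $\check\cN$.

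\emph{What Steps 1--2 actually prove.} Steps 1--2 are essentially Bezrukavnikov--Finkelberg. But the $\mathrm{IC}_\lambda$ are not compact in $\Sph_G=\Dmod(L^+G\backslash LG/L^+G)$; they generate only the \emph{locally compact} subcategory $\Sph_G^{\mathrm{loc.c}}$ (objects that become compact in $\Dmod(\mathrm{Gr}_G)$).
\begin{itemize}
\item Already $\mathrm{IC}_0=\delta_1$ fails to be compact. It is the pushforward along a closed embedding of $\omega_{\mathrm{pt}/L^+G}$, and $\omega_{\mathrm{pt}/G}$ is not compact in $\Dmod(\mathrm{pt}/G)$ for nontrivial reductive $G$.
\item $\mathrm{IC}_\lambda$ must go to $\Delta_*V^\lambda$, the pushforward along the unit section $\mathrm{pt}/\check G\to\Omega\check\fg/\check G$, not to $\cO\otimes V^\lambda$. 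Under $\Kos=\Hom_{\Omega V}(e,-)$ the skyscraper goes to the free module $\Sym(V[-2])$, while $\cO_{\Omega V}$ goes to the augmentation module up to shift. A degree check agrees: $\operatorname{End}(\cO_{\Omega\check\fg/\check G})$ sits in nonpositive degrees, whereas $\operatorname{End}(\mathrm{IC}_0)=H^\bullet(\B G)$ sits in nonnegative ones.
\item The classical truncation of $\Omega\check\fg/\check G$ is $\mathrm{pt}/\check G$, so the $\Delta_*V^\lambda$ generate \emph{all} of $\Coh(\Omega\check\fg/\check G)$. Also $\Delta_*\cO$ has singular support all of $\check\fg^*/\check G$, since its non-equivariant endomorphism algebra is $\Sym(\check\fg[-2])$.
\end{itemize}
Hence the ``remaining task'' of Step 3 is false, and so is its justification. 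By your own Step 2 formula, $\Sym(\check\fg[-2])^{\check G}\simeq H^\bullet(\B G)$ acts freely, not nilpotently, on $\Hom(\mathrm{IC}_0,\mathrm{IC}_0)$. A nonzero free $\Sym(\check\fg[-2])$-module has support all of $\check\fg^*$, not $\check\cN$. The objects $\cO\otimes V$ generate $\mathrm{Perf}=\Coh_{\{0\}}$, not $\Coh_{\check\cN/\check G}$, so the general criterion you invoke does not exist. Carried out correctly, your plan yields only the \emph{renormalized} equivalence $\mathrm{Ind}(\Sph_G^{\mathrm{loc.c}})\simeq\IndCoh(\Omega\check\fg/\check G)$, with no support condition.

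\emph{The missing step.} This is what \cite[\S12]{singular-support} adds to \cite{derived-satake}: locating the genuinely compact objects $\Sph_G^c\subset\Sph_G^{\mathrm{loc.c}}$.
\begin{itemize}
\item They are generated by objects induced, via the left adjoint of the forgetful functor, from compact objects of $\Dmod(\mathrm{Gr}_G)$. For example, the induction of $\delta_1$ corresponds, up to shift, to $\Delta_*\cO\otimes_{\Sym(\check\fg[-2])^{\check G}}e$. Its Koszul dual is the (sheared) coordinate ring of $\check\cN$.
\item Equivalently, one shows they are the locally compact objects on which $H^\bullet(\B G)\simeq\Sym(\check\fg[-2])^{\check G}$ acts nilpotently.
\item Since $\check\cN$ is the zero fiber of the Chevalley map $\check\fg^*\to\check\fg^*/\!/\check G$, this nilpotence is exactly the condition that the singular support lie in $\check\cN/\check G$. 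So $\Sat_G$ restricts to $\Sph_G^c\simeq\Coh_{\check\cN/\check G}(\Omega\check\fg/\check G)$, and ind-completing gives the statement.
\end{itemize}

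\emph{Monoidal structure.} On the spectral side it is convolution on the groupoid $\mathrm{pt}/\check G\times_{\check\fg/\check G}\mathrm{pt}/\check G\simeq\Omega\check\fg/\check G$, which is Koszul dual to $\otimes_{\Sym(\check\fg[-2])}$. Its unit is $\Delta_*\cO$, and it is not a pointwise tensor product. This is another way to see that $\mathrm{IC}_0\mapsto\cO$ cannot be right.
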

Following \cite[\S2.3.3]{quasi-temp}, we define the \emph{quasi-tempered spherical category} as
\[
\Sph_G^{\operatorname{quasi-temp}}:=\Sat_G^{-1}\!\big(\IndCoh_{\check\cN_\irreg/\check G}(\Omega\check\fg/\check G)\big)
\]
where $\check\cN_\irreg=\check\cN\backslash\check\cN_\reg$ is the closed subscheme of $\check\cN$ consisting of irregular nilpotent elements. Define the \emph{maximally anti-tempered spherical category} as the quotient
\[\Sph_G^{\operatorname{max-anti-temp}}:=\Sph_G/\Sph_G^{\operatorname{quasi-temp}},\]
which sits in an exact sequence
\[\begin{tikzcd}
	{\Sph_G^{\operatorname{quasi-temp}}} && {\Sph_G} && {\Sph_G^{\operatorname{max-anti-temp}}}.
	\arrow[shift left=1, hook, from=1-1, to=1-3]
	\arrow["{\operatorname{quasi-temp}}", shift left=1, from=1-3, to=1-1]
	\arrow[shift left=1, hook, from=1-5, to=1-3]
	\arrow["{\operatorname{max-anti-temp}}", shift left=1, from=1-3, to=1-5]
\end{tikzcd}\]
For a fixed point $x\in X(e)$ there is a correspondence
\[
\begin{tikzcd}
    &\Hck_{G,x}\arrow[swap]{dl}{h^\leftarrow}\arrow{r}\arrow{dr}{h^\rightarrow}&\cHck_{G,x}\\
    \Bun_G&&\Bun_G,
\end{tikzcd}
\]
where $\Hck_{G,x}$ is the global Hecke stack and $\cHck_{G,x}=L_x^+G\backslash L_xG/L_x^+G$ is the local Hecke stack. This defines an action of $\Sph_{G,x}:=\Dmod(\cHck_{G,x})$ on $\Dmod(\Bun_G)$. Now let
\begin{align*}
    \Dmod(\Bun_G)^{\operatorname{quasi-temp}}&:=\Sph_{G,x}^{\operatorname{quasi-temp}}\otimes_{\Sph_{G,x}}\Dmod(\Bun_G)\\
    \Dmod(\Bun_G)^{\operatorname{max-anti-temp}}&:=\Sph_{G,x}^{\operatorname{max-anti-temp}}\otimes_{\Sph_{G,x}}\Dmod(\Bun_G),
\end{align*}
which also sits in an exact sequence
\[\begin{tikzcd}
	{\Dmod(\Bun_G)^{\operatorname{quasi-temp}}} && {\Dmod(\Bun_G)} && {\Dmod(\Bun_G)^{\operatorname{max-anti-temp}}}.
	\arrow[shift left=1, hook, from=1-1, to=1-3]
	\arrow["{\operatorname{quasi-temp}}", shift left=1, from=1-3, to=1-1]
	\arrow[shift left=1, hook, from=1-5, to=1-3]
	\arrow["{\operatorname{max-anti-temp}}", shift left=1, from=1-3, to=1-5]
\end{tikzcd}\]
On the spectral side, the category $\Dmod(\Bun_G)^{\qt}$ also admits a description in terms of coherent singular supports.
Recall from \cite[Corollary~10.4.7]{singular-support} that $\Sing(\LS_{\check G})$ classifies, for a classical affine test scheme $S$, tuples $(\cP,\nabla,A)$ where $(\cP,\nabla)\in\Map(X^\dR\times S,\B\check G)$ and $A\in H^0\Gamma(X^\dR\times S,\check\fg_\cP^*)$. The substack $\Nilp\subset\Sing(\LS_{\check G})$ classifies tuples $(\cP,\nabla,A)$ where $A$ is nilpotent.
\begin{defn}
    Let $\Nilp_\irreg\subset\Nilp$ be the closed substack classifying tuples $(\cP,\nabla,A)$ such that $A$ is irregular nilpotent.
\end{defn}
The category $\Dmod(\Bun_G)^{\qt}$ sits in a commutative diagram
\[
\begin{tikzcd}
    \Dmod(\Bun_G)^{\qt}\arrow{rr}{\mathbb L_G^{\qt}}\arrow[hook]{d}&&\IndCoh_{\Nilp_\irreg}(\LS_{\check G})\arrow[hook]{d}\\
    \Dmod(\Bun_G)\arrow{rr}{\mathbb L_G}&&\IndCoh_\Nilp(\LS_{\check G}).
\end{tikzcd}
\]

The main theorem of \cite{quasi-temp} gives a description of the maximally anti-tempered category purely in terms of D-modules.
\begin{defn}[{\cite[\S1.2.1]{quasi-temp}}]
    Let $\Dmod_{\fz(\fg)}(\Bun_G)$ be the subcategory of $\Dmod(\Bun_G)$ consisting of objects $\cF$ such that if the Higgs bundle $(\cP,\phi)\in T^*\Bun_G$ lies in the singular support of $\cF$, then $\phi$ factors through the center $\fz(\fg)\subset\fg$.
\end{defn}
\begin{thm}[{\cite[Theorem~A]{quasi-temp}}]\label{thm:joakim}
     $\Dmod(\Bun_G)^{\mat}=\Dmod_{\fz(\fg)}(\Bun_G)$.
\end{thm}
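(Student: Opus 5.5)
The final statement is Theorem~\ref{thm:joakim}, which is quoted from \cite[Theorem~A]{quasi-temp} and not proved in this paper. Still, here is how I would prove it. The idea is to compare two subcategories of $\Dmod(\Bun_G)$ through the Hecke action at $x$.

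The first step identifies $\Dmod(\Bun_G)^{\mat}$ with the right orthogonal to the quasi-tempered part. That is, $\cF\in\Dmod(\Bun_G)^{\mat}$ exactly when $\Hom(\mathcal G,\cF)=0$ for every $\mathcal G$ obtained by acting on $\Dmod(\Bun_G)$ by $\Sph^{\qt}_{G,x}$.

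Under $\Sat_G$, the quotient $\Sph_G^{\mat}$ is ind-coherent sheaves on $\Omega\check\fg/\check G$ localized near the regular nilpotent locus. Via Koszul duality, this is governed by the regular nilpotent orbit. I would use this to find an explicit description of the maximally anti-tempered projector. Concretely, an object of $\Sph_G$ lies in the quasi-tempered part exactly when it is annihilated after inverting a suitable element of $H^*(\Omega\check\fg/\check G)$: namely, the principal $\check{\mathfrak{sl}}_2$ element coming from $\Sym(\check\fg[-2])^{\check G}$ restricted to the regular orbit.

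On the automorphic side, this cohomology class acts through the $\Sph_G$-action on $\Bun_G$. I expect it to be the Chern class of the determinant line bundle, or equivalently the Higgs-field quadratic Hitchin function, acting by the characteristic operator. Inverting it should localize singular support away from Higgs fields $\phi$ with nonzero non-central part. Here I would use Lemma~\ref{lem:SS-functorial} with the Hecke correspondence $h^\leftarrow,h^\rightarrow$, comparing singular support estimates: a Hecke functor by an object supported on irregular nilpotents shrinks singular support appropriately.

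The main obstacle is the reverse inclusion. One must show that an object with singular support in $\fz(\fg)$ is right-orthogonal to every quasi-tempered object. I would first prove that such objects are locally constant along $\Bun_G$ in the appropriate sense, i.e.\ that they are sums of lisse D-modules twisted by characters of $\pi_0$. For those objects, the Hecke action factors through the functor $\Sph_G\to\Vect$ given by the cohomology of the affine Grassmannian. Under Satake, this functor is $!$-restriction to the regular nilpotent (principal Arthur) parameter, and it vanishes on the irregular part. This gives both inclusions, with the lisse classification being the hardest input.
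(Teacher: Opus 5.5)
The paper gives no proof of this theorem. It is imported from \cite{quasi-temp}, and only the inclusion $\Dmod(\Bun_G)^{\mat}\subset\Dmod_{\fz(\fg)}(\Bun_G)$ is used, in Corollary~\ref{cor:poincare-factors}. Judged on its own, your outline has a genuine gap in exactly that inclusion.

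\textbf{The inclusion the paper uses.} You propose to detect the quasi-tempered part by inverting one element of $\Sym(\check\fg[-2])^{\check G}$. This cannot work.
\begin{enumerate}
\item Every positive-degree $\check G$-invariant polynomial on $\check\fg^*$ vanishes on all of $\check\cN$. So it acts locally nilpotently on $\IndCoh_{\check\cN/\check G}(\Omega\check\fg/\check G)\simeq\Sph_G$, and inverting it kills everything, not just $\Sph_G^{\qt}$. Invariants cannot separate $\check\cN_\reg$ from $\check\cN_\irreg$.
\item Even without invariance, $\check\cN_\irreg$ has codimension two in the irreducible variety $\check\cN$. By Krull's principal ideal theorem it is not the zero locus of any single function. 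Passing to $\Sph_G^{\mat}$ is localization away from a codimension-two locus, not a principal localization.
\item The principal nilpotent is a point of $\check\cN_\reg$, not an invariant. Its geometric counterpart, $c_1$ of the determinant line bundle on $\mathrm{Gr}$, is an endomorphism of the fiber functor $H^*(\mathrm{Gr},-)$. The Hecke action produces endomorphisms of the identity of $\Dmod(\Bun_G)$ only from $\mathrm{End}(\mathbf 1_{\Sph_{G,x}})\simeq H^*(\B G)\simeq\Sym(\check\fg[-2])^{\check G}$, which are the nilpotent invariants again.
\item No principle says that inverting a cohomology class or a Hitchin Hamiltonian confines singular support to central Higgs fields.
\item Lemma~\ref{lem:SS-functorial} treats only non-characteristic pullbacks, while Hecke functors involve the pushforward $h^\rightarrow_*$. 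Also, \emph{supported on irregular nilpotents} is a spectral condition on $\Omega\check\fg/\check G$ with no direct meaning for the kernel on $\mathrm{Gr}$.
\end{enumerate}
What is missing is the microlocal content of the cited theorem. Write $\mathbf 1^{\qt}$ for the quasi-tempered part of the unit of $\Sph_{G,x}$. You need a way to turn the vanishing of $\mathbf 1^{\qt}\star\cF$ into a constraint on the Higgs fields in the singular support of $\cF$.

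\textbf{The reverse inclusion.} This direction is in better shape but still incomplete. The following part works. Since $\Hck_{G,x}\simeq\Bun_G\times_{\B L^+_xG}\cHck_{G,x}$, base change gives $S\star\omega_{\Bun_G}\simeq\mathrm{ev}_x^!(q_*S)$ for the projection $q\colon\cHck_{G,x}\to\B L^+_xG$. The underlying vector space of $q_*S$ is $H^*(\mathrm{Gr},S)$. Under $\Sat_G$ this is the fiber at a regular nilpotent, so it vanishes on $\Sph_G^{\qt}$. Because $!$-pullback to a point is conservative on $\Dmod(\B G)$, $\omega_{\Bun_G}$ is maximally anti-tempered. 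The projection formula extends this to $\pi_{\Bun_G}^!$ of objects of $\Dmod(\Bun_{G_\abc})$. Strictly, the action on $\omega_{\Bun_G}$ factors through equivariant cohomology rather than through $\Sph_G\to\Vect$, but your conclusion survives.

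The passage to all of $\Dmod_{\fz(\fg)}(\Bun_G)$, however, rests on a false description. This category is not made of sums of lisse objects twisted by characters of $\pi_0$:
\begin{enumerate}
\item When $Z(G)$ is positive-dimensional, it contains $\pi_{\Bun_G}^!$ of non-lisse D-modules.
\item Even for semisimple $G$, it contains non-split extensions, limits and colimits of local systems that reflect the higher homotopy of $\Bun_G$.
\end{enumerate}
You would need a genuine generation statement, which the outline does not supply. For instance: $\Dmod_{\fz(\fg)}(\Bun_G)$ is generated, under limits and colimits (both of which preserve $\Dmod(\Bun_G)^{\mat}$), by objects whose Hecke action you can compute.
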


\section{Compatibility of Geometric Langlands and isogenies}\label{sec:compatibility-of-geometric-langlands-and-isogenies}
To describe the image of arbitrary finite-rank local systems on $\Bun_G$---not just the constant sheaf---we will need to analyze $\LS_{\check Z}$ where $\check Z\subset\check G$ is the center. In particular, we seek a geometric Langlands equivalence for the possibly disconnected torus $\check Z$, which we develop in this section. Readers who are willing to assume the group $\check G$ has connected center are welcome to skip this section.
\subsection{Recollections for connected tori}
Let $T$ be a torus and let $\check T$ be its Langlands dual. The Fourier--Mukai equivalence is an equivalence
\begin{equation}\label{eq:usual-FM}
\mathrm{FM}\colon\QCoh(\Bun_T)\simeq\QCoh(\Bun_{\check T}),
\end{equation}
with the Poincar\'e line bundle $\cL_{\Poinc}\in\QCoh(\Bun_T\times\Bun_{\check T})$ as a kernel. 
\begin{example}\label{example:image-under-FM}
    The structure sheaf $\cO_{\Bun_T}\in\QCoh(\Bun_T)$ is sent to the skyscraper sheaf $\delta_{\triv_{\check T}*}[-\delta_T]$ where $\delta_T=\dim(\Bun_T)$. Conversely, the skyscraper sheaf $\delta_{\triv_T*}$ is sent to $\cO_{\Bun_{\check T}}$.
\end{example}
By \cite{laumon-fourier,rothstein} the equivalence~\eqref{eq:usual-FM} can be enhanced to an equivalence
\begin{equation}\label{eq:FM-enhanced}
\mathrm{FM}^{\mathrm{enh}}\colon\Dmod(\Bun_T)\simeq\QCoh(\LS_{\check T}),
\end{equation}
which fits into a commutative diagram \cite[\S1.5.2]{GLC1}
\[
\begin{tikzcd}
    \Dmod(\Bun_T)\arrow{r}{\mathrm{FM}^{\mathrm{enh}}}\arrow[swap]{d}{\oblv^r}&\QCoh(\LS_{\check T})\arrow{d}{p_*}\\
    \QCoh(\Bun_T)\arrow{r}{\mathrm{FM}}&\QCoh(\Bun_{\check T}),
\end{tikzcd}
\]
where $p\colon\LS_{\check T}\to\Bun_{\check T}$ is the morphism forgetting the connection. Then the geometric Langlands equivalence is
\begin{equation}\label{eq:GLC-tori-defn}
\mathbb L_T:=\mathrm{FM}^{\mathrm{enh}}\circ\tau_T
\end{equation}
where $\tau_T$ is negation on $\Bun_T$. The equivalence is a monoidal equivalence, where:
\begin{itemize}
    \item $\Dmod(\Bun_T)$ carries the usual tensor product $\cF\otimes^!\cG$; and
    \item $\QCoh(\LS_{\check T})$ carries the convolution product, defined by $\cF\star\cG:=m_*(\cF\boxtimes\cG)$ where $m$ is the morphism $\LS_{\check T}\times\LS_{\check T}\to\LS_{\check T}$ induced from multiplication.
\end{itemize}
For our purposes, it is convenient to extend the equivalence to the case when $\check T$ is possibly disconnected (but still commutative). 
\begin{example}\label{ex:constant-D-module-under-GLC-tori}
    The constant D-module $\omega_{\Bun_T}\in\Dmod(\Bun_T)$ is sent to $\delta_{\triv_{\check T}*}\in\QCoh(\LS_{\check T})$. These are the monoidal units in the monoidal structures defined above. This is compatible with Example~\ref{example:image-under-FM}, since $\oblv^r(\omega_{\Bun_T})=\cO_{\Bun_T}[\delta_T]$.
\end{example}

\subsection{Disconnected tori}\label{subsec:disconnected-tori}
We will extend the geometric Langlands equivalence to disconnected tori. First, we need to make sense of the ``Langlands dual group" of a disconnected torus.

Recall that for a connected torus $\check T$, if $\Lambda=\Hom(\check T,\bG_m)$ is the character lattice, then $\check T=\Spec(e[\Lambda])$. The Langlands dual torus is defined as $T:=\Spec(e[\Lambda^\vee])$ where $\Lambda^\vee=\Hom(\Lambda,\Z)$. We now extend this definition to the case where $\check T$ is not necessarily connected.

\begin{defn}
    A \emph{possibly disconnected torus} is a commutative affine algebraic group $\check T$ over $e$ whose identity component $\check T^\circ$ is a torus. The \emph{character lattice} $X^*(\check T)$ is the group of homomorphisms $\check T\to\bG_m$. 
    The \emph{Langlands dual group} of $\check T$ is the Picard stack
    \begin{equation}\label{def:Langlands-dual}
    T:=\bG_m\otimes^{\mathbb L}_{\mathbb Z} X^*(\check T).
    \end{equation}
\end{defn}
\begin{remark}
    The derived tensor in \eqref{def:Langlands-dual} can be computed explicitly by presenting $X^*(\check T)$ as $\Lambda_1/\Lambda_2$ with free abelian groups $\Lambda_i$ and letting $T$ be the stacky quotient $(\bG_m\otimes\Lambda_1)/(\bG_m\otimes\Lambda_2)$.
\end{remark}
\begin{example}
    The character lattice of $\check T=\mu_n$ is $X^*(\mu_n)=\Z/n\Z$, so its Langlands dual is
    \[
    T=\bG_m\otimes^{\mathbb L}_{\Z}\Z/n\Z\simeq\B\mu_n.
    \]
\end{example}
For us, the main example is the following.
\begin{defn}
    The \emph{stacky abelianization}\footnote{The notation $G_{\abc}$ stands for \emph{\underline{ab}elianization (\underline{c}orrected)}.} of $G$ is $G_{\abc}:=G/[G,G]^{\mathrm{sc}}$ where $[G,G]^{\mathrm{sc}}$ is the simply connected cover of the derived subgroup $[G,G]\subset G$.
\end{defn}
\begin{remark}
    When $[G,G]$ is simply connected, this matches the usual notion of abelianization. However, generally the automorphism group of a point in $G_{\abc}$ is $\pi_1([G,G])$.
\end{remark}
\begin{lemma}\label{lem:dual-of-Z}
    Let $G$ be a reductive group, and let $\check Z$ be the center of the Langlands dual $\check G$. Then the dual of $\check Z$ is $G_\abc$.
\end{lemma}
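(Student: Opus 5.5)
We need to show $\bG_m\otimes^{\mathbb L}_{\Z}X^*(\check Z)\simeq G/[G,G]^{\mathrm{sc}}$ as Picard stacks. The plan is to compute $X^*(\check Z)$ from root data, write it as a quotient of free abelian groups, and then identify both sides with the same two-term complex of tori.

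\textbf{Step 1: compute $X^*(\check Z)$.} Let $(\Lambda,\Phi,\Lambda^\vee,\Phi^\vee)$ be the root datum of $G$, with $\Lambda=X^*(T)$ and $\Lambda^\vee=X_*(T)$. The dual group $\check G$ has maximal torus $\check T$ with $X^*(\check T)=\Lambda^\vee$, and its roots are the coroots $\Phi^\vee$. The center $\check Z\subset\check T$ is the intersection of the kernels of the roots of $\check G$, so restriction of characters gives
\[
X^*(\check Z)\simeq \Lambda^\vee/\Z\Phi^\vee .
\]
This quotient may have torsion. Moreover, $\Z\Phi^\vee$ is free, and it is exactly the cocharacter lattice of the maximal torus $T^{\mathrm{sc}}$ of $[G,G]^{\mathrm{sc}}$, since the simply connected group has the coroot lattice as its cocharacter lattice. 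Therefore
\[
\bG_m\otimes^{\mathbb L}_\Z X^*(\check Z)\simeq (\bG_m\otimes\Lambda^\vee)/(\bG_m\otimes\Z\Phi^\vee)=T/T^{\mathrm{sc}},
\]
where $T^{\mathrm{sc}}\to T$ is the (not necessarily injective) map induced by $[G,G]^{\mathrm{sc}}\to[G,G]\subset G$. The quotient is taken as a stacky quotient, i.e.\ the Picard stack associated to the two-term complex $[T^{\mathrm{sc}}\to T]$.

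\textbf{Step 2: identify $G_{\abc}$ with $T/T^{\mathrm{sc}}$.} The group $[G,G]^{\mathrm{sc}}$ acts on $G$ through $[G,G]^{\mathrm{sc}}\to G$, and $G_\abc$ is the quotient stack, which is a Picard stack because the crossed module $[G,G]^{\mathrm{sc}}\to G$ is braided. Consider the inclusion of crossed modules $[T^{\mathrm{sc}}\to T]\to[[G,G]^{\mathrm{sc}}\to G]$. I will show it is a quasi-isomorphism by comparing $\pi_0$ and $\pi_1$:
\begin{itemize}
\item On $\pi_1$ (kernels): $\ker(T^{\mathrm{sc}}\to T)=\ker([G,G]^{\mathrm{sc}}\to G)=\pi_1([G,G])$. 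This kernel is central, hence contained in $T^{\mathrm{sc}}$.
\item On $\pi_0$ (cokernels): we need $T/\mathrm{im}(T^{\mathrm{sc}})\simeq G/[G,G]$. The map $T\to G/[G,G]$ is surjective because $G=Z(G)^\circ[G,G]$ and $Z(G)^\circ\subset T$. Its kernel is $T\cap[G,G]$, which is the maximal torus of $[G,G]$ and equals the image of $T^{\mathrm{sc}}$.
\end{itemize}
Since a morphism of Picard stacks, or more generally of crossed modules, inducing isomorphisms on $\pi_0$ and $\pi_1$ is an equivalence, we get $T/T^{\mathrm{sc}}\simeq G_\abc$. Combined with Step 1, this proves the lemma.

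\textbf{Main obstacle.} The delicate point is the stacky bookkeeping: checking that the Picard (commutative) structure on $G_\abc$ matches the one on $\bG_m\otimes^{\mathbb L}X^*(\check Z)$, and not just the underlying stacks. My plan is to make all the identifications at the level of the two-term complexes $[\bG_m\otimes\Z\Phi^\vee\to\bG_m\otimes\Lambda^\vee]$. These are complexes of commutative groups, so the Picard structure comes for free. On the $G_\abc$ side, $T/T^{\mathrm{sc}}\to G/[G,G]^{\mathrm{sc}}$ is a morphism of braided crossed modules that induces isomorphisms on homotopy groups, which shows compatibility of the structures. A final point to check is that the answer does not depend on the chosen presentation of $X^*(\check Z)$; this holds because the derived tensor product is independent of the free resolution.
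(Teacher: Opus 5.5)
Your proposal is correct and follows essentially the same route as the paper, which identifies the dual of $\check Z=\ker(\check T\to\check T_{\ad})$ with $T/T^{\mathrm{sc}}$ (your computation $X^*(\check Z)\simeq\Lambda^\vee/\Z\Phi^\vee$ with $\Z\Phi^\vee=X_*(T^{\mathrm{sc}})$ is exactly this) and then asserts that the natural map $T/T^{\mathrm{sc}}\to G_\abc$ is an isomorphism. Your $\pi_0$/$\pi_1$ comparison supplies the verification of that last step, which the paper leaves implicit.
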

\begin{proof}
Let $\check T\subset\check G$ be a maximal torus and let $\check T_{\ad}\subset\check G_{\ad}$ be the image in the adjoint group; dually, let $T\subset G$ be a maximal torus and let $T^{\mathrm{sc}}\subset[G,G]^{\mathrm{sc}}$ be the preimage of $T\cap[G,G]$. Then since the dual of the adjoint group $\check G/\check Z$ is $[G,G]^{\mathrm{sc}}$, the dual of $\check Z=\ker(\check T\to\check T_{\ad})$ is $T/T^{\mathrm{sc}}$. Moreover, the natural map $T/T^{\mathrm{sc}}\to G/[G,G]^{\mathrm{sc}}=G_{\abc}$ is an isomorphism.
\end{proof}
In fact, by the structure theorem of finitely generated abelian groups, we may classify all possibly disconnected tori.
\begin{lemma}\label{lem:structure-thm-disconnected-tori}
    Any disconnected torus $\check T$ is isomorphic to $\check T^\circ\times\Gamma$ for some connected torus $\check T^\circ$ and a finite abelian group scheme $\Gamma$. Its Langlands dual is isomorphic to $T^\circ\times\B\Gamma^\vee(1)$ where $T^\circ$ is the Langlands dual of $\check T^\circ$ and $\Gamma^\vee(1)$ is the Tate twist of the Cartier dual of $\Gamma$.
\end{lemma}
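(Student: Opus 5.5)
We would prove the structure statement first, then compute the dual. The character group $X^*(\check T)$ is a finitely generated abelian group. Over an algebraically closed field of characteristic zero, the functor $\check T\mapsto X^*(\check T)$ is an anti-equivalence between commutative affine algebraic groups of multiplicative type and finitely generated abelian groups, with inverse $\Lambda\mapsto\Spec(e[\Lambda])$. A possibly disconnected torus is diagonalizable: its identity component is a torus and its component group is finite and commutative, and in characteristic zero such an extension is of multiplicative type. So $\check T\simeq\Spec(e[X^*(\check T)])$.

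By the structure theorem, $X^*(\check T)\simeq\Z^r\oplus A$ with $A$ finite. Applying $\Spec(e[-])$ gives $\check T\simeq\bG_m^r\times\Spec(e[A])$. Set $\check T^\circ=\bG_m^r$ and $\Gamma=\Spec(e[A])$, a finite abelian group scheme whose Cartier dual is the constant group $A$. The only subtle point is that the splitting is non-canonical, but the lemma only claims an isomorphism.

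For the dual side, we use that $\bG_m\otimes^{\mathbb L}_\Z(-)$ takes direct sums to products. Hence
\[
T\simeq (\bG_m\otimes\Z^r)\times(\bG_m\otimes^{\mathbb L}_\Z A).
\]
The first factor is $\bG_m\otimes X^*(\check T^\circ)$, which is $T^\circ$ by the connected-case definition. For the second factor, we decompose $A\simeq\bigoplus\Z/n_i$ and apply the example above: $\bG_m\otimes^{\mathbb L}\Z/n\simeq\B\mu_n$, computed via the resolution $\Z\xrightarrow{n}\Z$ as the stack quotient $\bG_m/\bG_m$ by the $n$-th power map. So $\bG_m\otimes^{\mathbb L}A\simeq\B\big(\prod\mu_{n_i}\big)$.

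It remains to identify $\prod\mu_{n_i}$ with $\Gamma^\vee(1)$. The Cartier dual of $\Gamma=\Spec(e[A])$ is the constant group $A$. More canonically, $\ker(\bG_m\otimes\Lambda_1\to\bG_m\otimes\Lambda_2)$ is $\operatorname{Tor}_1(\bG_m,A)$, i.e. $\mathrm{Hom}(A^\vee,\mu_\infty)$-type data, which is $A\otimes\mu_\infty=\Gamma^\vee(1)$. Here $\Gamma^\vee=A$, and the Tate twist replaces $\Z/n$ by $\mu_n$. This is exactly $\prod\mu_{n_i}$.

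The main, though mild, obstacle is this bookkeeping of the Tate twist: one must check that the derived tensor genuinely produces $\mu_n$ rather than $\Z/n$. This follows from the explicit presentation, since the kernel of $x\mapsto x^n$ on $\bG_m$ is $\mu_n$. Over algebraically closed $e$ the two are abstractly isomorphic anyway, so the claimed isomorphism holds in either case.
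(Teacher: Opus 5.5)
Your proposal is correct and is essentially the argument the paper intends; the paper gives no written proof beyond invoking the structure theorem for finitely generated abelian groups, i.e.\ splitting $X^*(\check T)\simeq\Z^r\oplus A$ and using $\bG_m\otimes^{\mathbb L}_{\Z}\Z/n\simeq\B\mu_n$ on the torsion part. One harmless slip in your ``more canonical'' aside: for finite $A$ the group $A\otimes\mu_\infty$ is zero, and the relevant kernel is that of $\bG_m\otimes\Lambda_2\to\bG_m\otimes\Lambda_1$ (not the reverse map), namely $\operatorname{Tor}_1^{\Z}(A,\mu_\infty)\simeq\Hom(A^\vee,\mu_\infty)=A(1)$ with $A^\vee=\Hom(A,\mathbb{Q}/\Z)$; your main computation does not depend on this.
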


Now, the geometric Langlands equivalence for disconnected tori states
\begin{prop}\label{prop:GLC-disconnected}
    Let $\check T$ be a possibly disconnected torus and let $T$ be the Langlands dual group. There is a canonical equivalence
    \begin{equation}\label{eq:disconnected-tori-equivalence}
    \mathbb L_T\colon \Dmod(\Bun_T)\simeq\QCoh(\LS_{\check T}).
    \end{equation}
\end{prop}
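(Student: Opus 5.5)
The plan is to reduce to the connected case \eqref{eq:GLC-tori-defn} using a free resolution of the character group, since the definition \eqref{def:Langlands-dual} of $T$ is itself phrased through one. Choose a presentation $0\to\Lambda_2\to\Lambda_1\to X^*(\check T)\to 0$ with $\Lambda_i$ free of finite rank. Let $\check T_i$ be the tori with character lattices $\Lambda_i$. Then $\check T=\ker(\check T_1\to\check T_2)$, and dually $T=T_1/T_2$, where $T_i$ is the dual torus of $\check T_i$ and the map $T_2\to T_1$ is induced by $\Lambda_2\hookrightarrow\Lambda_1$. Two consequences follow. On the automorphic side, $\Bun_T\simeq\Bun_{T_1}/\Bun_{T_2}$ is a quotient by the commutative group stack $\Bun_{T_2}$, so
\[
\Dmod(\Bun_T)\simeq\Dmod(\Bun_{T_1})^{\Bun_{T_2}}\simeq\Dmod(\Bun_{T_1})\otimes_{(\Dmod(\Bun_{T_2}),\star)}\Vect.
\]
Here the invariants for a group stack are rewritten as a relative tensor product over $\Dmod(\Bun_{T_2})$ with its convolution structure. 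This is legitimate because $\Bun_{T_2}$ is a disjoint union of abelian varieties times $\B T_2$, so its D-module category is dualizable and the invariants and coinvariants agree. On the spectral side, $\LS_{\check T}\simeq\LS_{\check T_1}\times_{\LS_{\check T_2}}\mathrm{pt}$ as derived stacks, because $\LS$ preserves fiber products of groups.

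The key input is that $\mathbb L_{T_i}$, built from the Fourier--Mukai transform, exchanges convolution with tensor product in both directions. The paper records that $\mathbb L_{T_i}$ intertwines $\otimes^!$ with convolution on $\QCoh(\LS_{\check T_i})$. By the same Laumon--Rothstein argument, swapping the roles of the two sides, $\mathbb L_{T_2}$ also carries $(\Dmod(\Bun_{T_2}),\star)$ to $(\QCoh(\LS_{\check T_2}),\otimes)$. Moreover, $\mathbb L_{T_1}$ is equivariant for the map $\Bun_{T_2}\to\Bun_{T_1}$ on one side and the pullback along $\LS_{\check T_1}\to\LS_{\check T_2}$ on the other, by functoriality of the Poincar\'e line bundle in the lattice. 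Granting this, the augmentation $\Vect$, that is, $!$-fiber at the trivial bundle composed with the trivial module, corresponds to the skyscraper at $\triv$ (Example~\ref{ex:constant-D-module-under-GLC-tori}, applied in the dual direction). We then obtain
\[
\Dmod(\Bun_T)\simeq\QCoh(\LS_{\check T_1})\otimes_{\QCoh(\LS_{\check T_2})}\Vect\simeq\QCoh(\LS_{\check T_1}\times_{\LS_{\check T_2}}\mathrm{pt})=\QCoh(\LS_{\check T}).
\]

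The last equivalence is the main obstacle. It requires that $\QCoh$ commute with this fiber product, which holds when $\LS_{\check T_2}$ is passable, or $1$-affine, in the sense of Gaitsgory's tensor product theorem. I would verify this by the explicit decomposition $\LS_{\check T_2}\simeq(\text{affine group ind-scheme part})\times\B\check T_2\times(\text{derived factor from } H^2)$. Each factor is $1$-affine or a quotient of an affine by a torus, and $\mathrm{pt}\to\LS_{\check T_2}$ is affine schematic up to $\B\check T_2$, so the standard results apply.

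It then remains to show canonicity, namely independence of the presentation. Any two presentations are dominated by a third, and a map of presentations induces compatible identifications by naturality of the Fourier--Mukai kernels in the lattices. As a sanity check, I would compare with Lemma~\ref{lem:structure-thm-disconnected-tori} in the case $\check T=\Gamma$ finite. Then $\QCoh(\LS_\Gamma)$ is $\Rep(\Gamma)$-valued functions on $H^1(X,\Gamma)$. On the other side, $\Dmod(\Bun_{\B\Gamma^\vee(1)})$ decomposes over $H^2(X,\Gamma^\vee(1))\simeq\Gamma^\vee$ into pieces $\Rep(H^1(X,\Gamma^\vee(1)))\otimes\Vect^{\Gamma}$. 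These match via Poincar\'e duality $H^1(X,\Gamma^\vee(1))^\vee\simeq H^1(X,\Gamma)$.
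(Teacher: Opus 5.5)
Your argument is correct in outline, but it takes a genuinely different route from the paper's.

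The paper fixes a non-canonical splitting $\check T\simeq\check T^\circ\times\Gamma$, $T\simeq T^\circ\times\B\Gamma^\vee(1)$ (Lemma~\ref{lem:structure-thm-disconnected-tori}). It uses Laumon--Rothstein on the connected factor and treats the finite factor directly. There, $\Bun_{\B\Gamma^\vee(1)}\times\LS_\Gamma\simeq\Ge_{\Gamma^\vee(1)}\times\Bun_\Gamma$ carries the duality pairing of \cite[\S8.2]{GLC5}. After choosing a base point, this pairing is a product of perfect pairings on $H^1\times H^1$ and $H^2\times\Gamma$, and the $\B^2\Gamma^\vee(1)$ factor contributes only $\Vect$.

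You instead resolve $X^*(\check T)$ by lattices and deduce the disconnected case formally from connected tori. On one side you use descent along the $\Bun_{T_2}$-torsor $\Bun_{T_1}\to\Bun_T$; on the other, $\LS_{\check T}\simeq\LS_{\check T_1}\times_{\LS_{\check T_2}}\pt$ together with a tensor product theorem.

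What each buys:
\begin{itemize}
\item Your route needs no separate analysis of finite groups. It also makes Lemma~\ref{lem:GLC-tori-functorial} and Corollary~\ref{cor:GLC-tori-monoidal-equivalence} nearly automatic, since any homomorphism of possibly disconnected tori lifts to a map of free presentations of the character groups.
\item The paper's route gives an explicit kernel. It also matches the construction of \cite{GLC5} that is invoked again in the proof of Theorem~\ref{thm:GLC-intertwines-actions}.
\end{itemize}
Both arguments treat independence of choices as a routine check. In your case, note that a map of presentations is unique only up to $\Hom(\Lambda_1',\Lambda_2)$. So you must verify that the chain homotopy relating two such maps induces a coherent identification of the resulting equivalences.

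Several local points need repair.

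\emph{The augmentation.} The trivial $(\Dmod(\Bun_{T_2}),\star)$-module structure on $\Vect$ is given by the left adjoint of $\pi^!$, i.e.\ compactly supported pushforward to a point. This corresponds to $\triv^*$, because $\pi^!$ corresponds to $\triv_*$. It is not the $!$-fiber at the trivial bundle, which is monoidal for $\otimes^!$ rather than for convolution.

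\emph{Invariants versus coinvariants.} Dualizability of $\Dmod(\Bun_{T_2})$ does not by itself identify invariants with coinvariants. You can bypass this step entirely:
\begin{itemize}
\item Transport the \v{C}ech nerve $\Bun_{T_1}\times\Bun_{T_2}^{\bullet}$, whose face maps are homomorphisms of tori, through Lemma~\ref{lem:GLC-tori-functorial} for connected tori.
\item This turns the limit along $!$-pullbacks into a limit along $*$-pushforwards on $\LS_{\check T_1}\times\LS_{\check T_2}^{\bullet}$.
\item Passing to left adjoints gives exactly the bar construction computing $\QCoh(\LS_{\check T_1})\otimes_{\QCoh(\LS_{\check T_2})}\Vect$.
\end{itemize}
The descent itself also uses that $\Bun_{T_1}\to\Bun_T$ is surjective locally on the base, which you should state.

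\emph{The tensor product step.} The step you call the main obstacle is actually the easy one, but your description of $\LS_{\check T_2}$ is off. Its $H^1$-part is a power of the universal vector extension of $\mathrm{Pic}^0(X)$, which is not affine or ind-affine. What you need is only that $\LS_{\check T_1}$ and $\LS_{\check T_2}$ are quasi-compact with affine diagonal and affine stabilizers in characteristic zero. Such stacks are perfect in the sense of Ben-Zvi--Francis--Nadler, so $\QCoh$ of the fiber product is the relative tensor product. In particular $\triv\colon\pt\to\LS_{\check T_2}$ is affine outright, not merely up to $\B\check T_2$.

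\emph{The sanity check.} In your comparison, $\Dmod(\B^2\Gamma^\vee(1))\simeq\Vect$, so each summand is just $\Rep(H^1(X,\Gamma^\vee(1)))$. The weights of $\Rep(\Gamma)$ are already accounted for by $H^2(X,\Gamma^\vee(1))\simeq\Gamma^\vee$, so there is no extra $\Gamma$-factor.
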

\begin{proof}
We construct the enhanced Fourier--Mukai equivalence $\mathrm{FM}^{\mathrm{enh}}$ as in \eqref{eq:FM-enhanced}, and compose with $\tau_T$. We first construct a Poincar\'e line bundle on $\Bun_T^\dR\times\LS_{\check T}$. Using Lemma~\ref{lem:structure-thm-disconnected-tori} we choose an isomorphism $\check T\simeq\check T^\circ\times\Gamma$ and $T\simeq T^\circ\times\B\Gamma^\vee(1)$ where $\check T^\circ$ is a connected torus and $\Gamma$ is a finite abelian group scheme. Now
\[
\Bun_{T^\circ}^\dR\times\LS_{\check T^\circ}
\]
carries the usual Poincar\'e line bundle considered by \cite{laumon-fourier,rothstein} which induces an equivalence. On the other hand
\[
\Bun_{\B\Gamma^\vee(1)}^\dR\times\LS_\Gamma\simeq\Ge_{\Gamma^\vee(1)}\times\Bun_\Gamma
\]
carries a Verdier duality pairing to $\B\bG_m$, as described in \cite[\S8.2.1]{GLC5}. As in \cite[\S8.2.4]{GLC5}, a choice of base point $x\in X(e)$ gives splittings
\[
\Ge_{\Gamma^\vee(1)}\simeq\B^2\Gamma^\vee(1)\times \B H^1_\et(X,\Gamma^\vee(1))\times H^2_\et(X,\Gamma^\vee(1)),\ \Bun_\Gamma\simeq\B\Gamma\times H^1_\et(X,\Gamma).
\]
The pairing is trivial on $\B^2\Gamma^\vee(1)$ and is given by the perfect pairings
\[ H^1_\et(X,\Gamma^\vee(1))\times H^1_\et(X,\Gamma)\to\bG_m,\ H^2_\et(X,\Gamma^\vee(1))\times \Gamma\to\bG_m.\]
Since $\QCoh(\B^2\Gamma^\vee(1))\simeq\Vect$, the 2-stacky part of $\Ge_{\Gamma^\vee(1)}$ does not contribute and hence the associated Fourier--Mukai functor
\[
\Dmod(\Bun_{\B\Gamma^\vee(1)})\to\QCoh(\LS_\Gamma)
\]
is an equivalence. Together we obtain an equivalence
\[
\Dmod(\Bun_T)\simeq\QCoh(\LS_{\check T}).
\]
Furthermore we may readily check the equivalence is independent of the splitting $\check T\simeq\check T^\circ\times\Gamma$.
\end{proof}

\subsection{Functoriality} Now let $\check\phi\colon\check T_1\to\check T_2$ be an arbitrary homomorphism between possibly disconnected tori. There is an induced homomorphism $\phi\colon T_2\to T_1$ between their Langlands duals. By an abuse of notation we also denote the induced morphisms $\LS_{\check T_1}\to\LS_{\check T_2}$ and $\Bun_{T_2}\to\Bun_{T_1}$ by $\check\phi$ and $\phi$, respectively.
\begin{lemma}\label{lem:GLC-tori-functorial}
    There is a commutative diagram
    \[
    \begin{tikzcd}
        \Dmod(\Bun_{T_1})\arrow{r}{\mathbb L_{T_1}}\arrow[swap]{d}{\phi^!}&\QCoh(\LS_{\check T_1})\arrow{d}{\check\phi_*}\\
        \Dmod(\Bun_{T_2})\arrow{r}{\mathbb L_{T_2}}&\QCoh(\LS_{\check T_2}).
    \end{tikzcd}
    \]
\end{lemma}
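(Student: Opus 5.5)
Since $\mathbb L_{T_i}=\mathrm{FM}^{\mathrm{enh}}\circ\tau_{T_i}$ and $\phi$ is a homomorphism, $\phi$ commutes with negation, so $\phi^!\circ\tau_{T_1}\simeq\tau_{T_2}\circ\phi^!$. It therefore suffices to prove $\check\phi_*\circ\mathrm{FM}^{\mathrm{enh}}_1\simeq\mathrm{FM}^{\mathrm{enh}}_2\circ\phi^!$. Both functors are integral transforms, so I would compare their kernels on $\Bun_{T_1}^\dR\times\LS_{\check T_2}$. The functor $\check\phi_*\circ\mathrm{FM}_1^{\mathrm{enh}}$ has kernel $(\mathrm{id}\times\check\phi)_*\cL_{\Poinc,1}$. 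The functor $\mathrm{FM}_2^{\mathrm{enh}}\circ\phi^!$ is the composite of pull--push along $\Bun_{T_2}^\dR\times\LS_{\check T_2}$, so it has kernel $(\phi\times\mathrm{id})_*\cL_{\Poinc,2}$ (pushforward in the D-module direction, i.e.\ $\phi_{*,\dR}$). The key input is the bilinearity and naturality of the Poincar\'e bundle: there is a canonical isomorphism
\[
(\mathrm{id}\times\check\phi)^*\cL_{\Poinc,2}\simeq(\phi\times\mathrm{id})^*\cL_{\Poinc,1}\quad\text{on }\Bun_{T_2}^\dR\times\LS_{\check T_1}.
\]
This holds because the Poincar\'e bundle is obtained from the pairing $T\times\check T\to\B\bG_m$ (through $X^*(\check T)$), and this pairing is natural in $\check\phi$.

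Rather than comparing pushforwards of kernels directly, I would reduce to generators and use that both sides are colimit-preserving and $\QCoh(\LS_{\check T_2})$-linear, with the convolution structure. Concretely:

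\begin{enumerate}
\item Check the statement for homomorphisms of connected tori. Here it is the classical compatibility of the Laumon--Rothstein transform with isogenies and maps of lattices, which follows from the kernel identity above by base change. One uses that $\phi\colon\Bun_{T_2}\to\Bun_{T_1}$ is a homomorphism of Picard stacks. Its fibers are torsors under $\ker$, and on the dual side this corresponds to $\check\phi$ being a quotient or closed embedding.
\item Factor an arbitrary $\check\phi$ as a surjection followed by a closed embedding. Using Lemma~\ref{lem:structure-thm-disconnected-tori}, reduce further to three special cases: maps of connected tori, maps of finite groups $\Gamma_1\to\Gamma_2$, and the inclusion/projection maps between $\check T^\circ\times\Gamma$ and its factors.
\item Handle the finite case using the explicit description in the proof of Proposition~\ref{prop:GLC-disconnected}. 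There the transform is the finite Fourier transform for the perfect pairings $H^1_\et(X,\Gamma^\vee(1))\times H^1_\et(X,\Gamma)\to\bG_m$ and $H^2_\et(X,\Gamma^\vee(1))\times\Gamma\to\bG_m$. For finite abelian groups, Fourier transform exchanges pullback along the dual map with pushforward along the map, and the pairings are functorial in $\Gamma$ by functoriality of Poincar\'e duality in étale cohomology.
\end{enumerate}

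I expect the main obstacle to be compatibility with the chosen splittings. The equivalence for disconnected tori was constructed via a splitting $\check T\simeq\check T^\circ\times\Gamma$ and a base point $x$, but a general $\check\phi$ need not respect chosen splittings. Mixed components, such as a map $\Gamma\to\check T^\circ$, appear and are not covered by the product cases. To handle this I would first try to avoid splittings altogether by giving a splitting-free description of the kernel, namely the Poincar\'e line bundle defined from the canonical pairing $\bG_m\otimes^{\mathbb L}X^*(\check T)\times\check T\to\bG_m$. Functoriality in $\check\phi$ is then tautological, and the splitting-independence asserted at the end of the proof of Proposition~\ref{prop:GLC-disconnected} identifies this kernel with the constructed one.

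Finally, one must check that the isomorphism of kernels induces an isomorphism of transforms. This is proper base change for $\phi\times\mathrm{id}$ (or $\mathrm{id}\times\check\phi$) together with the projection formula. These hold since the relevant maps are schematic, or quotients by finite/diagonalizable groups, in characteristic zero.
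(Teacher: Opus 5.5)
The paper states this lemma without proof; it is used only to deduce Corollary~\ref{cor:GLC-tori-monoidal-equivalence}. So your argument has to stand on its own, and its central step fails in the direction you set it up. Comparing kernels, you need $(\mathrm{id}\times\check\phi)_*\cL_{\Poinc,1}\simeq(\phi\times\mathrm{id})_*\cL_{\Poinc,2}$ on $\Bun_{T_1}^\dR\times\LS_{\check T_2}$, which is just the lemma in kernel form. The two sides are pushforwards from different stacks, and the pullback identity $(\mathrm{id}\times\check\phi)^*\cL_{\Poinc,2}\simeq(\phi\times\mathrm{id})^*\cL_{\Poinc,1}$, together with base change and the projection formula, does not identify them.

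Take $\check T_1=1\to\check T_2=\bG_m$. Your kernel identity reduces to the normalization $\cL_{\Poinc,2}|_{\Bun_{\bG_m}\times\{\triv\}}\simeq\cO$. The lemma, however, asserts $\mathbb L_{\bG_m}(\omega_{\Bun_{\bG_m}})\simeq\delta_{\triv*}$, in particular that de Rham cohomology of $\Bun_{\bG_m}$ twisted by the character sheaf of a nontrivial local system vanishes. That is genuine Fourier-inversion content (Example~\ref{ex:constant-D-module-under-GLC-tori}). What this kind of manipulation proves formally is the mirror statement $\check\phi^*\circ\mathrm{FM}_2\simeq\mathrm{FM}_1\circ\phi_{*,\dR}$, with $\phi_{*,\dR}$ the pushforward dual to $\phi^!$. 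Returning to $\phi^!$ by adjunction would need $\phi_{*,\dR}\dashv\phi^!$, which is not formal for non-proper $\phi$ such as $\pt\to\Bun_{\bG_m}$.

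The missing ingredient is the inversion formula: $(\mathrm{FM}^{\mathrm{enh}})^{-1}$ is the transform with kernel $\cL_{\Poinc}^{-1}$, suitably shifted, integrated over $\LS_{\check T}$. Run your argument on the inverses instead. The functor $\phi^!$ acts only on the $\Dmod(\Bun)$ tensor factor, so it commutes with $\Gamma(\LS_{\check T_1},-)$. Then your kernel identity, plus the projection formula for $(\mathrm{id}\times\check\phi)_*$ against the line bundle $\cL_{\Poinc,2}^{-1}$ (formal, since line bundles are dualizable), gives $\phi^!\circ(\mathrm{FM}_1^{\mathrm{enh}})^{-1}\simeq(\mathrm{FM}_2^{\mathrm{enh}})^{-1}\circ\check\phi_*$ with no properness needed. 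For the construction of Proposition~\ref{prop:GLC-disconnected} you must also check the inversion formula on the finite part; it reduces to character orthogonality for the perfect \'etale pairings used there.

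Your handling of mixed maps also needs repair. The Poincar\'e bundle is not induced by a pairing of the groups $T$ and $\check T$ themselves. It comes from a pairing $\Bun_T\times\LS_{\check T}\to\B\bG_m$ built from the curve: the Laumon--Rothstein bundle on the connected part, Poincar\'e duality in \'etale cohomology on the finite part. So the ``tautological'' canonical kernel is never actually constructed.

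You do not need it. Write $\check\phi$ via its restrictions to $\check T_1^\circ$ and $\Gamma_1$ and the multiplication of $\check T_2$, which is of product type. The only non-product piece is then a map $\psi\colon\Gamma_1\to\check T_2^\circ$. It factors as $\Gamma_1\xrightarrow{(1,\mathrm{id})}\check T_2^\circ\times\Gamma_1\xrightarrow{\alpha_\psi}\check T_2^\circ\times\Gamma_1\xrightarrow{\mathrm{pr}_1}\check T_2^\circ$, with $\alpha_\psi(t,\gamma)=(t\psi(\gamma),\gamma)$. Functoriality for the automorphism $\alpha_\psi$ amounts to the splitting-independence asserted, without proof, at the end of the proof of Proposition~\ref{prop:GLC-disconnected}. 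That is where the real content sits. For $\mu_n\hookrightarrow\bG_m$ it says that the Laumon--Rothstein bundle restricted to $\Bun_{\bG_m}^\dR\times\LS_{\mu_n}$ is pulled back from the Kummer/cup-product pairing along $\Bun_{\bG_m}\to\Bun_{\B\mu_n}$.

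Finally, reducing to generators via linearity for convolution is circular unless you prove that linearity independently. Convolution-monoidality of $\mathbb L_T$ is Corollary~\ref{cor:GLC-tori-monoidal-equivalence}, which the paper deduces from this very lemma. In any case, checking on generators would bring back the same non-formal computation as above.
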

\begin{cor}\label{cor:GLC-tori-monoidal-equivalence}
    Let $\check T$ be a possibly disconnected torus and $T$ be the Langlands dual group. The equivalence~\eqref{eq:disconnected-tori-equivalence} is monoidal under the usual monoidal structure on $\Dmod(\Bun_T)$ and the convolution product on $\QCoh(\LS_{\check T})$.
\end{cor}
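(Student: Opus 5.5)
The plan is to deduce the corollary from Lemma~\ref{lem:GLC-tori-functorial}, applied to the multiplication homomorphism and to the unit map.

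\textbf{Tensor product.} Take $\check T_1=\check T\times\check T$, $\check T_2=\check T$ and $\check\phi=\check m$ the multiplication. Since $\check T$ is commutative, $\check m$ is a homomorphism. Its dual is the diagonal $\phi=\Delta\colon T\to T\times T$. This holds because the derived tensor $\bG_m\otimes^{\mathbb L}_\Z(-)$ converts the codiagonal $X^*(\check T)\oplus X^*(\check T)\leftarrow X^*(\check T)$, which is dual to $\check m$, into the diagonal of Picard stacks.

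Two compatibilities with products need checking. First, $\Bun_{T\times T}\simeq\Bun_T\times\Bun_T$ and $\LS_{\check T\times\check T}\simeq\LS_{\check T}\times\LS_{\check T}$. Second,
\[
\mathbb L_{T\times T}(\cF\boxtimes\cG)\simeq\mathbb L_T(\cF)\boxtimes\mathbb L_T(\cG).
\]
The second holds because the Poincar\'e line bundle of the product is the exterior product of the Poincar\'e line bundles of the factors. In the construction in Proposition~\ref{prop:GLC-disconnected}, we can choose the splitting of $\check T\times\check T$ to be the product of splittings of $\check T$. Then both the usual Laumon--Rothstein kernel and the Verdier duality pairing on the finite part visibly factor. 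The negation $\tau$ also factors.

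Lemma~\ref{lem:GLC-tori-functorial} then gives
\[
\mathbb L_T(\cF\otimes^!\cG)=\mathbb L_T\Delta^!(\cF\boxtimes\cG)\simeq\check m_*\bigl(\mathbb L_T\cF\boxtimes\mathbb L_T\cG\bigr)=\mathbb L_T\cF\star\mathbb L_T\cG.
\]

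\textbf{Unit.} Apply Lemma~\ref{lem:GLC-tori-functorial} to the unit homomorphism $\check T_1=1\to\check T$, whose dual is $T\to\mathrm{pt}$. This sends $e\in\Dmod(\mathrm{pt})$ to $\omega_{\Bun_T}$ on one side and to $\delta_{\triv*}$ on the other, as in Example~\ref{ex:constant-D-module-under-GLC-tori}.

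\textbf{Higher coherences.} To upgrade the monoidal isomorphism to a monoidal structure (associativity and higher coherences), observe that the functoriality isomorphisms of Lemma~\ref{lem:GLC-tori-functorial} are compatible with composition of homomorphisms. This compatibility comes from base change for the Poincar\'e kernel along $\phi\times\mathrm{id}$ and $\mathrm{id}\times\check\phi$. Applying it to the commutative-algebra structure of $\check T$ in the category of possibly disconnected tori, the functors $\Dmod(\Bun_{(-)})$ (with $!$-pullback) and $\QCoh(\LS_{(-)})$ (with $*$-pushforward) become identified as functors out of the relevant Lawvere-type category. The algebra object $\check T$ then transports to a monoidal equivalence.

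\textbf{Main obstacle.} The step I expect to be hardest is the coherent compatibility of $\mathbb L_{T\times T}$ with exterior products, because it has to be checked in the stacky finite part. The identification $\Bun_{\B\Gamma^\vee(1)}^\dR\times\LS_\Gamma\simeq\Ge_{\Gamma^\vee(1)}\times\Bun_\Gamma$ uses splittings that depend on a base point $x$. I would handle this by working with the canonical Verdier pairing of \cite[\S8.2.1]{GLC5}, which is bilinear and splitting-independent, rather than with the split form.
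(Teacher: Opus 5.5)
Your proposal is correct and is essentially the paper's proof: the paper just applies Lemma~\ref{lem:GLC-tori-functorial} to the multiplication $\check m\colon\check T\times\check T\to\check T$, whose dual is $\Delta\colon T\to T\times T$, and your treatment of the unit, of the compatibility of $\mathbb L_{T\times T}$ with exterior products, and of higher coherences fills in details the paper leaves implicit. One terminological slip: the map $X^*(\check T)\to X^*(\check T)\oplus X^*(\check T)$, $\chi\mapsto(\chi,\chi)$, induced by $\check m$ is the diagonal, not the codiagonal, though your arrow direction and your conclusion that the dual is $\Delta$ are right.
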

\begin{proof}
    By Lemma~\ref{lem:GLC-tori-functorial} applied to the multiplication homomorphism $m\colon\check T\times\check T\to\check T$ whose dual is the diagonal morphism $\Delta\colon T\to T\times T$.
\end{proof}

\subsection{Compatibility of actions}
Recall the following from \cite[Proposition~2.1.6]{GLC5}.
\begin{prop}\label{prop:compatibility-with-isogeny}
    Let $G$ and $G'$ be reductive groups and let $\phi\colon G\to G'$ be an almost isogeny, i.e., a homomorphism inducing an isogeny on derived subgroups. Denote the dual homomorphism as $\check\phi\colon\check G'\to\check G$. By an abuse of notation let $\phi$ and $\check\phi$ also denote the induced morphisms $\Bun_G\to\Bun_{G'}$ and $\LS_{\check G'}\to\LS_{\check G}$. There is a commutative diagram
    \[
    \begin{tikzcd}
        \Dmod(\Bun_{G'})\arrow{r}{\mathbb L_{G'}}\arrow{d}{\phi^!}&\IndCoh_\Nilp(\LS_{\check G'})\arrow{d}{\check\phi_*^{\IndCoh}}\\
        \Dmod(\Bun_{G})\arrow{r}{\mathbb L_G}&\IndCoh_\Nilp(\LS_{\check G}).
    \end{tikzcd}
    \]
\end{prop}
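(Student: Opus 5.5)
Since the statement is quoted from \cite[Proposition~2.1.6]{GLC5}, the proof amounts to re-running the characterization of $\mathbb L_G$ used in \cite{GLC1}. Throughout, the almost isogeny $\phi$ induces an isomorphism $N\simeq N'$ on maximal unipotent radicals.

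\textbf{Step 1: Hecke equivariance.} Let $\check\phi^*\colon\Rep(\check G)\to\Rep(\check G')$ be the restriction of representations. On the automorphic side, $\phi^!$ intertwines the Hecke action of $V\in\Rep(\check G)$ on $\Dmod(\Bun_G)$ with the action of $\check\phi^*V$ on $\Dmod(\Bun_{G'})$. This follows from geometric Satake applied to the map of affine Grassmannians $\mathrm{Gr}_G\to\mathrm{Gr}_{G'}$, which is compatible with restriction of representations, together with base change along the global Hecke stacks. On the spectral side, $\check\phi^{\IndCoh}_*$ is linear over $\QCoh(\LS_{\check G})$ by the projection formula. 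Consequently both composites $\mathbb L_G\circ\phi^!$ and $\check\phi^{\IndCoh}_*\circ\mathbb L_{G'}$ are $\Rep(\check G)_{\mathrm{Ran}}$-linear functors $\Dmod(\Bun_{G'})\to\IndCoh_\Nilp(\LS_{\check G})$.

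\textbf{Step 2: Whittaker coefficients.} Compose both functors with $\Gamma(\LS_{\check G},-)$. For the spectral composite, $\Gamma(\LS_{\check G},\check\phi_*^{\IndCoh}-)\simeq\Gamma(\LS_{\check G'},-)$, so it gives $\mathrm{coeff}_{G'}$. For the automorphic composite we get $\mathrm{coeff}_G\circ\phi^!$. We therefore need $\mathrm{coeff}_G\circ\phi^!\simeq\mathrm{coeff}_{G'}$, up to the shifts fixed by the normalization. To see this, note that the Whittaker coefficient is computed on $\Bun_N^{\omega^{\rho}}$, built from the chosen $\omega_X^{1/2}$. The map $\Bun_N^{\omega^\rho}\to\Bun_G\to\Bun_{G'}$ factors as $\Bun_{N'}^{\omega^{\rho}}\to\Bun_{G'}$, compatibly with the characters $\psi$. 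Base change then gives the identification, and one checks that the dimension shifts agree because $\delta_N=\delta_{N'}$.

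\textbf{Step 3: uniqueness.} In \cite{GLC1}, $\mathbb L_{G,\mathrm{coarse}}$ is characterized as the unique Hecke-linear functor whose composite with $\Gamma$ is $\mathrm{coeff}$; concretely, it is obtained from $\mathrm{coeff}$ by the spectral projector and the Beilinson action. Steps 1 and 2 therefore give the commutative square after passing to $\QCoh(\LS_{\check G})$, i.e.\ after composing with $\Psi\colon\IndCoh_\Nilp\to\QCoh$.

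\textbf{Main obstacle: lifting from $\QCoh$ to $\IndCoh_\Nilp$.} The remaining task is to lift the square from $\QCoh$ to $\IndCoh_\Nilp$. I would do this with the gluing description of $\IndCoh_\Nilp(\LS_{\check G})$ as $\QCoh$ together with the images of spectral Eisenstein series along parabolics. The plan is to reduce to Levi subgroups by induction on semisimple rank, using the compatibility of $\phi^!$ with $\Eis_!$ and of $\check\phi_*^{\IndCoh}$ with spectral Eisenstein series. The induction bottoms out at tori, where the square is Lemma~\ref{lem:GLC-tori-functorial}. The delicate point is that the identifications obtained at each stage must be compatible with the gluing data, not merely agree objectwise.
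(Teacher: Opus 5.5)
The paper does not prove this statement; it imports it from \cite[Proposition~2.1.6]{GLC5}. So the question is whether your reconstruction stands on its own.

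Your Steps 1--3 are a sound sketch of the \emph{coarse} compatibility $\Psi\circ\mathbb L_G\circ\phi^!\simeq\check\phi_*\circ\Psi\circ\mathbb L_{G'}$:
\begin{itemize}
\item $\phi^!$ is Hecke-linear via $\mathrm{Gr}_G\to\mathrm{Gr}_{G'}$, and this upgrades to $\QCoh(\LS_{\check G})$-linearity by spectral decomposition.
\item The Whittaker data, namely $\Bun_{N_{\rho(\omega_X)}}$ with its character, coincide for $G$ and $G'$ compatibly with $\phi$. Hence $\mathrm{coeff}_G\circ\phi^!\simeq\mathrm{coeff}_{G'}$ with matching shifts.
\item A $\QCoh(\LS_{\check G})$-linear functor into $\QCoh(\LS_{\check G})$ is determined by its composite with $\Gamma(\LS_{\check G},-)$.
\end{itemize}
You should also check that the identifications $\Dmod_{\frac12}\simeq\Dmod$ coming from $\omega_X^{1/2}$ are compatible with $\phi$, but that is bookkeeping.

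The genuine gap is the last step, the lift from $\QCoh$ to $\IndCoh_\Nilp$. You leave it as an unexecuted plan, and the coherence problem you flag is unresolved. An induction over Levis through the gluing description would have to produce identifications compatible with all the parabolic gluing data at once, which essentially means redoing \cite{GLC3}. As written it proves nothing.

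It is also unnecessary. In \cite{GLC1}, $\mathbb L_G$ is obtained from $\mathbb L_{G,\mathrm{coarse}}$ by showing that the latter sends compact objects to eventually coconnective ones. One then uses that $\Psi$ restricts to equivalences $\IndCoh(\LS_{\check G})^{\geq -n}\simeq\QCoh(\LS_{\check G})^{\geq -n}$. Thus $\mathbb L_G$ is the unique continuous lift of $\mathbb L_{G,\mathrm{coarse}}$ through $\Psi$ that sends compacts into $\IndCoh^{>-\infty}$.

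Consequently, suppose you know that both $\mathbb L_G\circ\phi^!$ and $\check\phi^{\IndCoh}_*\circ\mathbb L_{G'}$ send compact objects into $\IndCoh(\LS_{\check G})^{>-\infty}$. Then your coarse isomorphism lifts uniquely and naturally on $\Dmod(\Bun_{G'})^c$, since $\Psi$ is fully faithful there, and it then ind-extends.
\begin{itemize}
\item For $\check\phi^{\IndCoh}_*\circ\mathbb L_{G'}$ this is clear: $\mathbb L_{G'}$ has the property, and $\check\phi^{\IndCoh}_*$ is left t-exact up to a finite shift.
\item For $\mathbb L_G\circ\phi^!$ it is clear whenever $\phi^!$ preserves compactness. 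For example, take an isogeny with kernel $A$. Then $\Bun_G\to\Bun_{G'}$ is \'etale onto a union of components, with fibers $\B A\times(\text{finite set})$. So $\phi^!=\phi^*$ has the continuous right adjoint $\phi_*$.
\end{itemize}

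Be aware of one caveat. For almost isogenies involving a positive-dimensional torus, $\phi^!$ does \emph{not} preserve compactness; already $\Bun_{\bG_m}\to\pt$ sends $e$ to $\omega_{\Bun_{\bG_m}}$. This includes $\Delta\colon G\to G_{\ab}\times G$, the case actually used in Theorem~\ref{thm:GLC-intertwines-actions}. In that case you need an additional argument bounding $\mathbb L_G\circ\phi^!$ on compact objects.
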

\begin{remark}
    Compare with Lemma~\ref{lem:GLC-tori-functorial}.
\end{remark}
As a consequence of Proposition~\ref{prop:compatibility-with-isogeny}, we conclude the following.
\begin{thm}\label{thm:GLC-intertwines-actions}
    Let $G$ be a reductive group. Then the geometric Langlands equivalence $\mathbb L_G\colon\Dmod(\Bun_G)\simeq\IndCoh_\Nilp(\LS_{\check G})$ intertwines the following actions under the monoidal equivalence $\Dmod(\Bun_{G_{\abc}})\simeq\QCoh(\LS_{\check Z})$ of Proposition~\ref{prop:GLC-disconnected} and Corollary~\ref{cor:GLC-tori-monoidal-equivalence}:
    \begin{enumerate}
        \item $\cF\in\Dmod(\Bun_{G_{\abc}})$ acts on $\Dmod(\Bun_G)$ by $\cG\mapsto\pi_{\Bun_G}^!\cF\otimes^!\cG$; and
        \item $\cF\in\QCoh(\LS_{\check Z})$ acts on $\IndCoh_\Nilp(\LS_{\check G})$ via convolution $\cG\mapsto m_*(\cF\boxtimes\cG)$ where $m\colon\LS_{\check Z}\times\LS_{\check G}\to\LS_{\check G}$ is induced by the multiplication homomorphism $m\colon\check Z\times\check G\to\check G$. 
    \end{enumerate}
\end{thm}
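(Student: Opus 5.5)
The plan is to deduce Theorem~\ref{thm:GLC-intertwines-actions} from Proposition~\ref{prop:compatibility-with-isogeny} applied to the almost isogeny
\[
\phi\colon G\times Z(G)^\circ\to G,\qquad (g,z)\mapsto gz,
\]
or more cleanly to the map $(\mathrm{id},\pi)\colon G\to G\times G_{\abc}$. Since $G_{\abc}$ is stacky, I will work with a genuine torus replacing it and then descend using Lemma~\ref{lem:GLC-tori-functorial}.

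\textbf{Step 1: reduce to a genuine torus.} Choose a $z$-extension-type presentation: a torus $T'$ with a surjection $T'\to G_{\abc}$ whose kernel is connected. Concretely, one can take $T'=T/T^{\mathrm{sc}}$ presented via Lemma~\ref{lem:dual-of-Z}. Dually, this gives a map of possibly disconnected tori $\check Z\to\check T'$. By Lemma~\ref{lem:GLC-tori-functorial}, pushforward along $\LS_{\check Z}\to\LS_{\check T'}$ corresponds to $!$-pullback along $\Bun_{T'}\to\Bun_{G_{\abc}}$. Thus the two actions of $\Dmod(\Bun_{G_{\abc}})$ and $\QCoh(\LS_{\check Z})$ are generated, compatibly with monoidal structures by Corollary~\ref{cor:GLC-tori-monoidal-equivalence}, by the data coming from $G\times G_{\abc}$.

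\textbf{Step 2: the key identification.} Consider $\Psi\colon G\to G\times G_{\abc}$, $g\mapsto(g,\pi(g))$. This is an almost isogeny once $G_{\abc}$ is handled, for instance via a torus cover $T'$ replacing $G_{\abc}$ and the corresponding fiber product group $\tilde G=G\times_{G_{\abc}}T'$. Its dual is the multiplication map $\check m\colon\check G\times\check Z\to\check G$ (or $\check G\times\check T'\to\check G'$). The geometric Langlands equivalence for a product is the tensor product $\mathbb L_G\otimes\mathbb L_{G_{\abc}}$, with $\IndCoh_\Nilp(\LS_{\check G}\times\LS_{\check Z})\simeq\IndCoh_\Nilp(\LS_{\check G})\otimes\QCoh(\LS_{\check Z})$. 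Proposition~\ref{prop:compatibility-with-isogeny} then gives
\[
\mathbb L_G\big(\Psi^!(\cG\boxtimes\cF)\big)\simeq m_*^{\IndCoh}\big(\mathbb L_G\cG\boxtimes\mathbb L_{G_{\abc}}\cF\big).
\]
Since $\Psi^!(\cG\boxtimes\cF)=\cG\otimes^!\pi_{\Bun_G}^!\cF$, this is exactly the intertwining of the two actions.

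\textbf{Step 3: coherences.} Compatibility with the associativity and unit data follows from functoriality of Proposition~\ref{prop:compatibility-with-isogeny} under composition, applied to $G\to G\times G_{\abc}\times G_{\abc}$.

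\textbf{Main obstacle.} Proposition~\ref{prop:compatibility-with-isogeny} is stated only for reductive groups, while $G_{\abc}$ is a Picard stack when $[G,G]$ is not simply connected. The delicate step is therefore the descent in Step~1. I plan to pass to a $z$-extension $\tilde G\to G$ with simply connected derived group, where $G_{\abc}$ is a true torus and the argument is direct. I then recover $G$ via the central kernel torus, using Lemma~\ref{lem:GLC-tori-functorial} together with the disconnected-torus Fourier--Mukai of Proposition~\ref{prop:GLC-disconnected} to match $\B\pi_1([G,G])$-parts with the finite group $\check Z/\check Z^\circ$.
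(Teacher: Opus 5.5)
Your Step~2 does not yet prove anything. Proposition~\ref{prop:compatibility-with-isogeny} applies only to almost isogenies between reductive groups, and $G\times G_{\abc}$ is not one when $[G,G]$ is not simply connected, so the displayed isomorphism there is just the theorem restated. (Also, $T/T^{\mathrm{sc}}$ in Step~1 is not a torus: since $T^{\mathrm{sc}}\to T$ has kernel $\pi_1([G,G])$, Lemma~\ref{lem:dual-of-Z} says it \emph{is} $G_{\abc}$.) So everything rests on the descent sketched in your last paragraph, and that step is missing.

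Take a $z$-extension $1\to T''\to\tilde G\xrightarrow{\phi}G\to1$ with $\tilde G_{\abc}=T'$ a torus. The case of $\tilde G$ is fine. But combining it with $\mathbb L_{\tilde G}\circ\phi^!\simeq\check\phi^{\IndCoh}_*\circ\mathbb L_G$ and Lemma~\ref{lem:GLC-tori-functorial} for $\check Z\hookrightarrow\check T'$ only yields
\[ \check\phi^{\IndCoh}_*\,\mathbb L_G(\pi_{\Bun_G}^!\cF\otimes^!\cG)\simeq\check\phi^{\IndCoh}_*\, m_*(\mathbb L_{G_{\abc}}\cF\boxtimes\mathbb L_G\cG). \]
Here $\check\phi\colon\LS_{\check G}\to\LS_{\check{\tilde G}}$ is the base change of $\pt\to\LS_{\check T''}$. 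So $\check\phi^{\IndCoh}_*$ is conservative but far from fully faithful, and an isomorphism after applying it does not give one in $\IndCoh_\Nilp(\LS_{\check G})$, let alone one compatible with the module structures.

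To descend you would need three things:
\begin{itemize}
\item exhibit $\Dmod(\Bun_G)$ as the $\Bun_{T''}$-invariants of $\Dmod(\Bun_{\tilde G})$;
\item exhibit $\IndCoh_\Nilp(\LS_{\check G})$ as the fiber of $\IndCoh_\Nilp(\LS_{\check{\tilde G}})$ over $\triv\in\LS_{\check T''}$;
\item prove that $\mathbb L_{\tilde G}$ coherently intertwines translation by $\Dmod(\Bun_{T''})$ with tensoring by $\QCoh(\LS_{\check T''})$.
\end{itemize}
That is exactly where the $\B\pi_1([G,G])$-part of $G_{\abc}$ and the finite group $\pi_0(\check Z)$ are encoded, and none of the results you invoke supply it.

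The paper avoids $z$-extensions altogether. It splits $\check Z\simeq\check Z^\circ\times\Gamma$, so $G_{\abc}\simeq G_{\ab}\times\B\Gamma^\vee(1)$, and treats the two factors separately.
\begin{itemize}
\item The dual of $\check Z^\circ$ is the honest torus $G_{\ab}=G/[G,G]$. The map $\Delta\colon G\to G_{\ab}\times G$ is a genuine almost isogeny of reductive groups with dual $m\colon\check Z^\circ\times\check G\to\check G$, so Proposition~\ref{prop:compatibility-with-isogeny} applies directly. Your Step~2 works verbatim once $G_{\abc}$ is replaced by $G_{\ab}$.
\item The remaining finite part is the compatibility of the $\Dmod(\Ge_{\Gamma^\vee(1)})\simeq\QCoh(\LS_\Gamma)$-actions. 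This is precisely what your descent would have to reprove, and the paper takes it from \cite[\S8.5]{GLC5} rather than deriving it from Proposition~\ref{prop:compatibility-with-isogeny}.
\end{itemize}
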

\begin{proof}
    Choose an isomorphism $\check Z=\check Z^\circ\times\Gamma$ with dual group $Z\simeq G_\ab\times\B\Gamma^\vee(1)$. Now, it suffices to check the compatibility of the actions of $\Dmod(\Bun_{G_\ab})\simeq\QCoh(\LS_{\check Z^\circ})$ and $\Dmod(\Ge_{\Gamma^\vee(1)})\simeq\QCoh(\LS_\Gamma)$. The former follows from Proposition~\ref{prop:compatibility-with-isogeny} applied to the homomorphism $\Delta\colon G\to G_\ab\times G$ with dual $m\colon\check Z\times \check G\to\check G$:
    \[
    \begin{tikzcd}
        \Dmod(\Bun_{G_\ab})\otimes\Dmod(\Bun_G)\arrow{rr}{\mathbb L_{G_\ab}\otimes\mathbb L_G}\arrow{d}{\Delta^!}&&\QCoh(\LS_{\check Z^\circ})\otimes\IndCoh_\Nilp(\LS_{\check G})\arrow{d}{m_*^{\IndCoh}}\\
        \Dmod(\Bun_G)\arrow{rr}{\mathbb L_G}&&\IndCoh_\Nilp(\LS_{\check G}).
    \end{tikzcd}
    \]
    The latter follows from \cite[\S8.5]{GLC5}.
\end{proof}

\section{Lafforgue's expectation}\label{sec:lafforgue-expectation}
We will formally state Lafforgue's expectation in this section. We will use the following explicit description of $\LS_{\bG_a}$.
\begin{lemma}\label{lem:LS_Ga}
    There is an isomorphism $\LS_{\bG_a}\simeq\cC^\bullet_\dR(X)[1]$.\footnote{Here, by an abuse of notation we implicitly view the complex $\cC^\bullet_\dR(X)[1]$ as a derived stack; this is the \emph{linear stack} $\mathbb V(\cC^\bullet_\dR(X)[1])$ as defined in \cite[\S3.2]{toen}.}
    In particular, non-canonically, \[\LS_{\bG_a}\simeq\Omega\bA^1\times H^1_\dR(X)\times\B\bG_a.\]
\end{lemma}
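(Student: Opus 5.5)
We use the standard description of $\LS_H$ as the mapping stack $\Map(X^\dR,\B H)$ and specialize to the abelian unipotent group $H=\bG_a$. For a derived affine test scheme $S=\Spec A$, an $S$-point of $\LS_{\bG_a}$ is a $\bG_a$-torsor on $X^\dR\times S$. Since $\B\bG_a$ is the linear stack $\mathbb V(e[1])$, i.e.\ $\B\bG_a\simeq\Omega^{-1}\bA^1$, these torsors are classified by the connective truncation
\[
\tau^{\le 0}\big(\Gamma(X^\dR\times S,\cO)[1]\big)=\tau^{\le0}\big(\cC^\bullet_\dR(X)\otimes_e A[1]\big).
\]
The identification uses the Künneth formula $\Gamma(X^\dR\times S,\cO)\simeq\cC^\bullet_\dR(X)\otimes A$, which holds because $X$ is proper and smooth, so $\cC^\bullet_\dR(X)$ is a perfect complex. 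By definition, this functor is exactly the linear stack $\mathbb V(\cC^\bullet_\dR(X)[1])$ of \cite[\S3.2]{toen}, whose $A$-points are $\Map_{e\text{-mod}}\big(\cC^\bullet_\dR(X)[1]^\vee,A\big)$. The key steps are therefore: (i) write $\LS_{\bG_a}=\Map(X^\dR,\B\bG_a)$; (ii) identify $\B\bG_a$ with the linear stack on $e[1]$, so that maps into it are computed by $\cO$-cohomology shifted by one; (iii) apply Künneth to get the canonical isomorphism $\LS_{\bG_a}\simeq\cC^\bullet_\dR(X)[1]$.

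For the non-canonical splitting, we use that over a field of characteristic zero every complex is quasi-isomorphic to its cohomology. This gives
\[
\cC^\bullet_\dR(X)[1]\simeq H^0_\dR(X)[1]\oplus H^1_\dR(X)\oplus H^2_\dR(X)[-1].
\]
Linear stacks send direct sums to products, so it remains to identify each factor, using $H^0_\dR(X)\simeq e$ and $H^2_\dR(X)\simeq e$ (via the trace map):
\begin{itemize}
\item the summand $H^0_\dR(X)[1]\simeq e[1]$ gives $\B\bG_a$;
\item the summand $H^1_\dR(X)$ in degree $0$ gives the affine space $H^1_\dR(X)$;
\item the summand $H^2_\dR(X)[-1]\simeq e[-1]$ gives $\Omega\bA^1=0\times_{\bA^1}0$, since its $A$-points are $\tau^{\le 0}(A[-1])$, i.e.\ loops at $0$ in $\bA^1$.
\end{itemize}

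The main point requiring care is getting the shifts and dualization conventions of $\mathbb V(-)$ consistent, so that $e[1]$ corresponds to $\B\bG_a$ and $e[-1]$ to $\Omega\bA^1$ rather than the reverse. We check this against the two extreme cases. First, $\B\bG_a$ classifies $\bG_a$-torsors, whose automorphisms are $H^0$ and whose isomorphism classes are $H^1$, placed in degrees $-1$ and $0$ after the shift by one. Second, the degree-$2$ class in $\cC^\bullet_\dR(X)$ lands in degree $+1$ after the shift, which produces the derived (non-classical) direction $\Omega\bA^1$. Once these conventions are fixed, the rest is formal.
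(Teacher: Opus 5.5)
Your proof is correct. It reaches the same intermediate statement as the paper: an $A$-point of $\LS_{\bG_a}$ is a degree-one class in $\Gamma(X^\dR\times\Spec A,\cO)\simeq\cC^\bullet_\dR(X)\otimes A$, and after that both arguments split by formality. You get to that statement from a different input, though.

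The paper uses the Tannakian description of a point as a symmetric monoidal functor $\Rep(\bG_a)\to\QCoh(S)\otimes\Dmod(X)$, together with $\Rep(\bG_a)\simeq e\langle\xi\rangle\operatorname{-mod}$ for $|\xi|=1$. The datum is then the image of $\xi$, a map $\cO_S\boxtimes\omega_X\to\cO_S\boxtimes\omega_X[1]$.

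You instead use $\LS_{\bG_a}=\Map(X^\dR,\B\bG_a)$ and the fact that $\B\bG_a=\mathbb V(e[1])$ is an Eilenberg--MacLane linear stack. The classification then follows because connective truncation commutes with the limit computing $\Gamma(X^\dR\times S,\cO)$.

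What each route buys:
\begin{itemize}
\item In your version, this step is a direct consequence of the universal property of $\B\bG_a$. The paper's sentence ``such a functor amounts to fixing a morphism'' is terse and implicitly relies on the same fact.
\item You also check the three factors of the splitting explicitly, which the paper does not.
\item The paper's version stays in the Tannakian language it reuses later, e.g.\ to see $\LS_{\check N}^{\mathrm{restr}}\simeq\LS_{\check N}$.
\end{itemize}

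Two small remarks. First, the K\"unneth step does not need properness or perfectness of $\cC^\bullet_\dR(X)$. What it actually uses is that $\Gamma(X^\dR,-)$, i.e.\ de Rham cohomology, is continuous for $X$ smooth of finite type, so it commutes with $-\otimes A$.

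Second, the dualization convention for $\mathbb V(-)$ that you single out as the delicate point is immaterial here. Poincar\'e duality $\cC^\bullet_\dR(X)^\vee\simeq\cC^\bullet_\dR(X)[2]$ makes $\cC^\bullet_\dR(X)[1]$ self-dual, so $\Map(\cC^\bullet_\dR(X)[1],A)$ and $\Map(\cC^\bullet_\dR(X)[1]^\vee,A)$ agree.
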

\begin{proof}
    An $S$-point of $\LS_{\bG_a}$ is equivalent to a symmetric monoidal functor
    \[
    \Rep(\bG_a)\to \QCoh(S)\otimes\Dmod(X).
    \]
    Since $\Rep(\bG_a)\simeq e\langle\xi\rangle\operatorname{-mod}$ where $\xi$ has degree one, such a functor amounts to fixing a morphism
    \[
    \cO_S\boxtimes\omega_X\to\cO_S\boxtimes\omega_X[1],
    \]
    which is equivalently a morphism $\cO_S\to \cO_S\otimes\cC_\dR^\bullet(X)[1]$,
    as desired. 
    Since the chain complex $\cC^\bullet_\dR(X)$ is formal, there is a non-canonical splitting.
\end{proof}
Note the following description of the category of finite-rank local systems on $\Bun_G$. Following \cite[\S4.1.1]{quasi-temp}, for a prestack $\cY$ we denote by $\Dmod(\cY)^{\mathrm{coh}}$ the category of locally coherent D-modules, i.e., D-modules $\cF$ such that for any smooth map $f\colon S\to\cY$ from an affine scheme of finite type, the pullback $f^!\cF\in\Dmod(S)$ is a bounded complex with coherent cohomologies.
\begin{prop}\label{prop:fin-rank-local-systems}
    Let $\pi_{\Bun_G}$ be the obvious morphism $\Bun_G\to\Bun_{G_\abc}$. Then
    \[
    \pi_{\Bun_G}^![\delta_Z-\delta_G]^{\heartsuit,\mathrm{coh}}\colon\Dmod_0(\Bun_{G_\abc})^{\heartsuit,\mathrm{coh}}\to\Dmod_0(\Bun_G)^{\heartsuit,\mathrm{coh}}
    \]
    is an equivalence. 
\end{prop}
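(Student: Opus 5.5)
The plan is to identify both sides with categories of flat vector bundles, and then show that $\pi_{\Bun_G}$ is a smooth surjection whose fibers are connected and "de Rham simply connected". Two standard facts make that enough. First, pullback along a smooth surjection with connected fibers is fully faithful on flat bundles. Second, if in addition every flat bundle on every fiber is trivial, then flat bundles descend, so the pullback is essentially surjective.

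First, I will check that $\pi:=\pi_{\Bun_G}\colon\Bun_G\to\Bun_{G_\abc}$ is smooth and surjective of relative dimension $\delta_G-\delta_Z$. Here $\delta_Z=\dim\Bun_{G_\abc}$, computed through $G_\abc\simeq T/T^{\mathrm{sc}}$ as in Lemma~\ref{lem:dual-of-Z}.
\begin{itemize}
\item Smoothness: on tangent complexes, the map is $\cC^\bullet(X,\fg_\cP)[1]\to\cC^\bullet(X,\fz)[1]$, up to the stacky correction. Its cone is controlled by $H^2$ of the derived Lie algebra on a curve, which vanishes.
\item Fibers: the fiber of $\pi$ over any point is a twisted form of $\Bun_{[G,G]^{\mathrm{sc}}}$, because the kernel of $G\to G_\abc$ is $[G,G]^{\mathrm{sc}}$ in the stacky sense.
\item Shift: by Lemma~\ref{lem:SS-functorial}, $\pi$ is smooth, hence non-characteristic for the zero section. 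So $\pi^!\simeq\pi^*[2(\delta_G-\delta_Z)]$, and $\pi^![\delta_Z-\delta_G]$ is the t-exact smooth pullback $\pi^*[\dim.rel]$. It therefore preserves $\Dmod_0(-)^{\heartsuit,\mathrm{coh}}$.
\item Objects: on a smooth stack, this category is the abelian category of $\cO$-coherent D-modules in the heart, i.e.\ vector bundles with flat connection.
\end{itemize}

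Second, I will establish the two properties of the fiber $F=\Bun_{H}$ with $H=[G,G]^{\mathrm{sc}}$ (or a twisted form of it, which does not affect the argument):
\begin{itemize}
\item $F$ is connected, since $\pi_0(\Bun_H)=\pi_1(H)=0$.
\item Every flat vector bundle on $F$ is trivial.
\end{itemize}
For the second property I will use uniformization $\mathrm{Gr}_{H,x}\to\Bun_H$. This map is surjective with connected fibers (the fibers are quotients of $H(X\setminus x)$, which is connected for simply connected $H$). Pulling back a flat bundle to $\mathrm{Gr}_{H,x}$ lands on an ind-projective, ind-(simply connected) ind-scheme. There the restriction to each projective Schubert variety has regular singularities automatically, so Riemann--Hilbert applies and the bundle is trivial. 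Triviality then descends: the descent datum along the connected fibers is an automorphism of a trivial flat bundle, hence constant, hence trivial.

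Third, I will show full faithfulness and essential surjectivity by relative de Rham cohomology.
\begin{itemize}
\item Full faithfulness: it reduces to $H^0_\dR$ of the fibers being $e$, which holds by connectedness. Then $\mathcal H^0\pi_{*,\dR}\pi^*E\simeq E$ for any flat bundle $E$ on the base, by base change along the smooth map.
\item Essential surjectivity: given $L$ on $\Bun_G$, set $E:=\mathcal H^0\pi_{*,\dR}L$. Since $L|_F$ is trivial on every fiber, $E$ is a flat bundle of the same rank, and the counit $\pi^*E\to L$ is an isomorphism fiberwise, hence an isomorphism.
\end{itemize}
Any non-quasicompactness is handled by working over quasicompact opens of $\Bun_{G_\abc}$; there the restriction of $\pi$ is smooth and locally of finite type, and the base-change statements hold over each such open.

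I expect the main obstacle to be the triviality of flat bundles on $\Bun_H$ in the de Rham setting, since Riemann--Hilbert is unavailable for possibly irregular connections on a non-proper stack. The uniformization reduction to the ind-proper affine Grassmannian is where I would concentrate the effort. The subtle points are justifying regularity on each Schubert variety, and checking that the descent datum along $H(X\setminus x)$ is trivial, which again uses connectedness.
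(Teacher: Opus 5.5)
Your proposal is correct in outline, but it takes a different route from the paper. The paper never analyzes $\pi_{\Bun_G}$ algebraically.

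\begin{itemize}
\item \textbf{Simply connected case.} The statement that every flat bundle on $\Bun_{[G,G]^{\mathrm{sc}}}$ is trivial is quoted from \cite[Proposition~4.1.4.1]{quasi-temp}.
\item \textbf{General $G$.} The same strategy is run. Lisse D-modules on $\Bun_G$ are regular holonomic, so Riemann--Hilbert identifies $\Dmod_0(-)^{\heartsuit,\mathrm{coh}}$ on both sides with local systems on the underlying topological stacks. One then uses that $\Bun_G^{\mathrm{top}}\to\Bun_{G_\abc}^{\mathrm{top}}$ is an equivalence on $\tau_{\le1}$-truncations; for semisimple $G$ this is \cite[Proposition~4.1.6]{GLC5}.
\end{itemize}

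That topological fact is your fibration argument in homotopy. The fiber $\Bun^{\mathrm{top}}_{[G,G]^{\mathrm{sc}}}$ is connected and simply connected, so the long exact sequence gives isomorphisms on $\pi_0$ and $\pi_1$.

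You run the same fibration argument directly in algebraic de Rham terms, and your uniformization step reproves the simply connected input. What this buys you: Riemann--Hilbert is only applied on projective Schubert varieties, where regularity is automatic. So you never need regular holonomicity of lisse objects on the non-proper stack $\Bun_G$, which is the nontrivial input behind the paper's passage to topology. In exchange, the paper's route is a few lines given the citations and avoids all relative de Rham bookkeeping.

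That bookkeeping is where your sketch is thinnest. Your remark on non-quasicompactness misidentifies the issue. Each connected component of $\Bun_{G_\abc}$ is already quasi-compact; it is the fibers of $\pi_{\Bun_G}$, twisted forms of $\Bun_{[G,G]^{\mathrm{sc}}}$, that are not. So shrinking the base changes nothing.

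As a result, your claim in the essential-surjectivity step needs its own argument. You assert that $\mathcal H^0\pi_{*,\dR}L$ is a flat bundle of the right rank, compatibly with restriction to fibers. This is a cohomology-and-base-change statement for a smooth, non-proper, non-quasi-compact map, and it is not automatic. One way to supply it:
\begin{enumerate}
\item Trivialize $\pi_{\Bun_G}$ smooth-locally on the base, for instance via translation by $\Bun_{Z(G)^\circ}$.
\item Note that every nonempty open of a fiber is connected, since the fibers are smooth and connected, hence irreducible. So fiberwise flat sections are determined by their value along a section.
\item Spread such sections out from a dense open of the base and extend them across divisors.
\end{enumerate}

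Two smaller points. First, the fibers of $\mathrm{Gr}_{H,x}\to\Bun_H$ are $H(X\setminus x)$-torsors, not quotients. Second, on the singular Schubert varieties you should pull back as crystals of vector bundles, rather than via $!$-pullback in the perverse heart.
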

\begin{proof}
    When $G$ is simply connected, the left-hand side is simply the abelian category of finite-dimensional vector spaces, and this was proved in \cite[Proposition~4.1.4.1]{quasi-temp}. 

For general $G$, this follows from the same strategy, by observing that on the underlying topological stacks,
    \[
    \Bun_G^{\mathrm{top}}\to\Bun_{G_\abc}^{\mathrm{top}}
    \]
    is a homotopy equivalence on the $\tau_{\le1}$ truncations. For semisimple $G$ this was proved in \cite[Proposition~4.1.6]{GLC5} and the same strategy shows this for general $G$.
\end{proof}
Thus, to describe all finite-rank local systems on $\Bun_G$, it suffices to characterize the functor $\pi^!_{\Bun_G}\colon\Dmod(\Bun_{G_\abc})\to\Dmod(\Bun_G)$. In fact, the functor factors through $\Dmod_{\fz(\fg)}(\Bun_G)$, the category which is the subject of Theorem~\ref{thm:joakim}.\footnote{This is by the functoriality of singular support, and observing that the perpendicular of $[\fg,\fg]\subset\fg$ under a non-degenerate invariant form is $\fz(\fg)\subset\fg$.} We will extend Lafforgue's conjecture to this situation.

First, we will replace the stack $\LS_{\check N}/\check\rho(\bG_m)$ in \eqref{eq:intro-corr} by a stack $\LS_{\check Z\check N}/\check\rho(\bG_m)$. 
Consider the extension
\begin{equation}\label{eq:SES-of-Z}
1\to\check Z\to\check Z_{\mathrm{enh}}:=\frac{\check Z\times\bG_m}{\mu_2}\to \bG_m\to 1,
\end{equation}
where
\begin{itemize}
    \item $\mu_2\hookrightarrow\check Z\times\bG_m$ is the diagonal embedding $\epsilon\mapsto(2\check\rho(\epsilon),\epsilon)$;
\item $\check Z\to \check Z_{\mathrm{enh}}$ is $a\mapsto (a,1)$; and
\item $\check Z_{\mathrm{enh}}\to\bG_m$ is $(a,t)\mapsto t^2$.
\end{itemize}
Let us form the semidirect product
\[
\check Z_{\mathrm{enh}}\ltimes_{2\check\rho}\check N,
\]
where $(a,t)\in\check Z_{\mathrm{enh}}$ acts on $\check N$ via $\mathrm{ad}(2\check\rho(t))$. Then \eqref{eq:SES-of-Z} gives rise to an extension
\[
1\to\check Z\check N\to\check Z_{\mathrm{enh}}\ltimes_{2\check\rho}\check N\to \bG_m\to 1.
\]
This defines an action of $\bG_m$ on $\B(\check Z\check N)=\B(\check Z_{\mathrm{enh}}\ltimes_{2\check\rho}\check N)\times_{\B\bG_m}1$ which further defines an action of $\bG_m$ on $\LS_{\check Z\check N}=\Map(X^\dR,\B(\check Z\check N))$. We denote the quotient by $\LS_{\check Z\check N}/\check\rho(\bG_m)$.

Now, we consider the correspondence
\begin{equation}\label{eq:lafforgue-correspondence}
\begin{tikzcd}
    &\LS_{\check Z\check N}/\check\rho(\bG_m)\arrow[swap]{dl}{\iota}\arrow{dr}{\Sigma}\\
    \LS_{\check G}&&(\LS_{\check Z}\times\Omega\bA^1)/\bG_m,
\end{tikzcd}\end{equation}
where the action of $\bG_m$ on $\LS_{\check Z}\times\Omega\bA^1$ is diagonal: on the first factor it is via the extension $\check Z_{\mathrm{enh}}$ as in \eqref{eq:SES-of-Z} and on the second factor it is of weight one.

The morphism $\iota$ is defined as the composition
\[
\LS_{\check Z\check N}/\check\rho(\bG_m)\to\LS_{\check Z_{\mathrm{enh}}\ltimes_{2\check\rho}\check N}\to\LS_{\check G},
\]
where the second morphism is induced from the homomorphism
\[\check Z_{\mathrm{enh}}\ltimes_{2\check\rho}\check N\to\check G:(a,t,n)\mapsto a\cdot (2\check\rho)(t)\cdot n.\]
The morphism $\Sigma$ is defined via the obvious homomorphism $\check Z\check N\to\check Z$ and a non-degenerate character $\check N\to\bG_a$. 

The following functor is a generalization of \eqref{intro-eq:defn-of-poinc-spec}.

\begin{defn}
    Using notation from \eqref{eq:singularity-functor2}, let $\Poinc^\spec$ be the composition
\begin{equation}\label{eq:spectral-Poinc-defn}
\QCoh(\LS_{\check Z})\xrightarrow{\exp^\spec_{\LS_{\check Z}}}\IndCoh_{0\times\bA^1}((\LS_{\check Z}\times\Omega\bA^1)/\bG_m)\xrightarrow{\iota^{\IndCoh}_*\Sigma^!}\IndCoh_\Nilp(\LS_{\check G}).
\end{equation}
\end{defn}
\begin{remark}
    The object $\Poinc^\spec$ as defined in the introduction \eqref{intro-eq:defn-of-poinc-spec} is $\Poinc^\spec(\delta_{\triv_{\check Z}*})$.
\end{remark}
Lafforgue's conjecture may now be stated.
\begin{thm}\label{conj:lafforgue-Z}
    Let $\pi_{\Bun_G}\colon\Bun_G\to\Bun_{G_\abc}$ be the standard morphism. There is a commutative diagram
    \begin{equation}\label{eq:main-diagram}
    \begin{tikzcd}
    \Dmod(\Bun_{G_{\mathrm{abc}}})\arrow{r}{\mathbb L_{G_{\mathrm{abc}}}}\arrow{d}{\pi_{\Bun_G}^!}&\QCoh(\LS_{\check Z})\arrow{d}{\Poinc^\spec}\\
        \Dmod(\Bun_G)\arrow{r}{\mathbb L_G}&\IndCoh_\Nilp(\LS_{\check G})
    \end{tikzcd}
    \end{equation}
    where $G_{\mathrm{abc}}=G/[G,G]^{\mathrm{sc}}$ and $\bL_{G_\abc}$ is the equivalence of Proposition~\ref{prop:GLC-disconnected} (we use Lemma~\ref{lem:dual-of-Z} to identify the dual of $\check Z$ with $G_\abc$).
\end{thm}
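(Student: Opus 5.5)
The plan is to reduce the diagram \eqref{eq:main-diagram} to a single object, $\omega_{\Bun_G}$, and then identify that object via constant terms, following the outline in the introduction.

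\textbf{Step 1: reduce to the unit.} Both composites $\mathbb L_G\circ\pi^!_{\Bun_G}$ and $\Poinc^\spec\circ\mathbb L_{G_\abc}$ are functors out of $\Dmod(\Bun_{G_\abc})\simeq\QCoh(\LS_{\check Z})$. I claim both are $\QCoh(\LS_{\check Z})$-linear.
\begin{itemize}
\item For the left composite, $\pi^!_{\Bun_G}$ is linear for the action (1) of Theorem~\ref{thm:GLC-intertwines-actions}, because $\pi^!_{\Bun_G}\cF\simeq\pi^!_{\Bun_G}\cF\otimes^!\omega_{\Bun_G}$. By that theorem, $\mathbb L_G$ intertwines action (1) with the convolution action (2).
\item For the right composite, $\Poinc^\spec$ is convolution-linear. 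The morphisms $\iota$ and $\Sigma$ in \eqref{eq:lafforgue-correspondence} are equivariant for the $\LS_{\check Z}$-action by multiplication, since $\check Z$ is central and the character kills $\check Z$. The functor $\exp^\spec_{\LS_{\check Z}}$ is linear by construction, and base change lets $\iota^{\IndCoh}_*\Sigma^!$ commute with convolution.
\end{itemize}
Each composite therefore equals convolution with the image of the unit $\omega_{\Bun_{G_\abc}}\mapsto\delta_{\triv_{\check Z}*}$ (Example~\ref{ex:constant-D-module-under-GLC-tori}, extended to the disconnected case). So it suffices to show $\mathbb L_G(\omega_{\Bun_G})\simeq\Poinc^\spec(\delta_{\triv*})$, compatibly with the linear structures. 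That compatibility is automatic once we identify the objects, by a uniqueness argument for the unit map.

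\textbf{Step 2: show $\cF:=\mathbb L_G^{-1}\Poinc^\spec(\delta_{\triv*})$ lies in $\Dmod_{\fz(\fg)}(\Bun_G)$.} By Theorem~\ref{thm:joakim}, it suffices to show $\Poinc^\spec$ is right-orthogonal to $\IndCoh_{\Nilp_\irreg}$. Dually, via Proposition~\ref{lem:duality-intertwined} and Lemma~\ref{lem:serre-duality}, this says $\Whit^\spec$ vanishes on $\IndCoh_{\Nilp_\irreg}(\LS_{\check G})$. To check this, I would compute singular supports along $\Sigma\circ\iota^!$. A singular codirection pulled back to $\LS_{\check Z\check N}$ that survives $\sing$ must pair nontrivially with the nondegenerate character. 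Hence it is a regular nilpotent covector, namely $f$ lying in the open Borel-orbit. Objects with irregular support therefore map into $\QCoh(\Omega\bA^1/\bG_m)$ and are killed by $\sing$. This is a Koszul-duality computation using Lemma~\ref{lem:koszul-commutative}.

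\textbf{Step 3: identify $\cF$ via constant terms.} By Proposition~\ref{prop:fin-rank-local-systems}, objects in $\Dmod_{\fz(\fg)}$ of the relevant kind are pulled back from $\Bun_{G_\abc}$. We therefore need to detect $\cF$ by $(\pp^-\circ\rr)^!$, which is surjective on $\pi_0$ and $\pi_1$. On this subcategory $\CT^-_*\simeq(\pp^-\circ\rr)^!$ up to shift, and \cite{GLC3} gives $\mathbb L_T\circ\CT^-_*\simeq\CT^{-,\spec}\circ\mathbb L_G$. So I compute $\CT^{-,\spec}\Poinc^\spec$ by base change along $\LS_{\check B^-}\to\LS_{\check G}$ against $\iota$.

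The fiber product $\LS_{\check B^-}\times_{\LS_{\check G}}\LS_{\check Z\check N}/\check\rho(\bG_m)$ stratifies by relative Bruhat position. Only the open cell $\check B^-\check N$ should contribute: there the $\check N$-local system is forced to be trivial-up-to-$\check T$, and $\Sigma$ becomes essentially $\LS_{\check Z}$-valued. On the other cells, the character is nontrivial along a $\bG_a$-direction, so they contribute $\sing$-null pieces. On the open cell, $\exp^\spec$ pulled back along $\Omega\bA^1$ integrates to $\delta$-type objects, yielding $\delta_{\triv_{\check T}*}$ up to shift, i.e. $\mathbb L_T(\omega_{\Bun_T})$. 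Then $(\pp^-\rr)^!\cF\simeq\omega_{\Bun_T}$, and combined with Step 2 this forces $\cF\simeq\omega_{\Bun_G}$ up to shift. The shift is fixed by $d$ from Proposition~\ref{lem:duality-intertwined}.

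\textbf{Main obstacle.} The hardest step is the spectral constant-term computation in Step 3. It requires controlling the non-open Bruhat strata and showing that their $\exp^\spec$-contributions vanish after $\sing$ (multiplicativity of $\exp^\spec$ against a nontrivial linear form). It also requires checking the equivalence $\CT^-_*\simeq(\pp^-\rr)^!$ on $\Dmod_{\fz(\fg)}$, which I would approach via the non-characteristic criterion, Lemma~\ref{lem:SS-functorial}.
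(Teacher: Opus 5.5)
Your architecture is the paper's. You reduce to the unit via $\QCoh(\LS_{\check Z})$-linearity and Theorem~\ref{thm:GLC-intertwines-actions}. You get anti-temperedness from the vanishing of $\Whit^\spec$ on $\IndCoh_{\Nilp_\irreg}$. You compute $\CT^{-,\spec}\circ\Poinc^\spec$ by the Bruhat stratification, where only the open stratum survives $\sing$.

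The gap is the last step of Step~3: ``$(\pp^-\circ\rr)^!\cF\simeq\omega_{\Bun_T}$, combined with Step~2, forces $\cF\simeq\omega_{\Bun_G}$.'' Since $\pp^-\circ\rr$ is not surjective on points, $!$-pullback along it is not conservative in general. It is conservative on lisse objects (by $\pi_0$-surjectivity), but even there it pins down $\cF$ only once you know $\cF$ is a shifted object of the heart. Two ingredients are missing.

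\emph{Nilpotent singular support.} Step~2 only gives $\cF\in\Dmod_{\fz(\fg)}(\Bun_G)$. Unless $G$ is semisimple, this category contains non-lisse objects, e.g.\ $\pi_{\Bun_G}^!$ of a non-lisse D-module on $\Bun_{G_\abc}$. So you must also show $\cF$ has nilpotent singular support, whence $\cF\in\Dmod_{\fz(\fg)}\cap\Dmod_\Nilp=\Dmod_0(\Bun_G)$. The paper does this in Corollary~\ref{cor:poincare-factors2}. For the input $\delta_{\triv_{\check Z}*}$ the correspondence factors through $\LS_{\check G}^{\mathrm{restr}}$, because $\LS_{\check N}^{\mathrm{restr}}=\LS_{\check N}$, and then \cite[Proposition~14.5.3]{AGKRRV} applies.

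\emph{The heart argument.} You need Lemma~\ref{lem:conservative-and-t-exact}: on $\Dmod_0(\Bun_G)$ the functor $(\pp^-\circ\rr)^![2\delta_N]$ is t-exact, because objects are regular holonomic and the map is non-characteristic. It is also conservative. This puts $\cF[-\delta_G]$ in the heart, and only then does $\pi_1$-surjectivity give $\cF\simeq\omega_{\Bun_G}$. Your appeal to Proposition~\ref{prop:fin-rank-local-systems} cannot replace this. That proposition concerns $\Dmod_0(\Bun_G)^{\heartsuit,\mathrm{coh}}$, which is exactly what you have not shown $\cF$ lies in.

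\emph{Constant term on $\Dmod_{\fz(\fg)}$.} Non-characteristicity alone does not give $\CT^-_*\simeq(\pp^-\circ\rr)^!$ there. You first need the contraction principle, $\qq^-_*\simeq\rr^*$ on the automatically monodromic objects of $\Dmod(\Bun_{B^-})$. Only then does non-characteristicity of $\pp^-$ and $\rr$ turn $\rr^*(\pp^-)^!$ into a shift of $(\pp^-\circ\rr)^!$.

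\emph{Shifts.} The theorem asserts commutativity with no shift, so ``up to shift'' leaves real work undone. The relevant shift is also not the $d$ of Proposition~\ref{lem:duality-intertwined}, which enters only the dual statement, Theorem~\ref{thm:laf1}. The open stratum gives $\CT^{-,\spec}\circ\Poinc^\spec\simeq\shear\circ\gamma_*$. Under $\mathbb L_T^{-1}$ the shear is the \emph{component-dependent} shift by $\langle2\check\rho,\lambda\rangle=\operatorname{dim.rel.}(\qq^-)-\delta_N$ on $\Bun_T^\lambda$.

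You must check that this cancels exactly against the $\operatorname{dim.rel.}(\qq^-)$ from Proposition~\ref{prop:eisenstein-and-delta}. After the translation $t_{\rho(\omega_X)}$, it must also cancel against the $\delta_{(N^-)_{\rho(\omega_X)}}$ from Theorem~\ref{thm:constant-term-compatibility}. This uniform cancellation is what makes $(\pp^-\circ\rr)^!\cF$ equal to $\omega_{\Bun_T}$ in a single degree on every component, which the heart argument requires.
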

We prove Theorem~\ref{conj:lafforgue-Z} in \S\ref{subsec:main-proof}. The cohomological shifts will be worked out in the proof of Theorem~\ref{prop:image-of-constant-sheaf}---everything happens to cancel out!

We may dualize everything. Denote by $\Whit^\spec$ the composition
\begin{equation}\label{eq:defn-of-whit^spec}
\IndCoh_\Nilp(\LS_{\check G})\xrightarrow{\Sigma^{\IndCoh}_*\iota^!}\IndCoh_{0\times\bA^1}((\LS_{\check Z}\times\Omega\bA^1)/\bG_m)\xrightarrow{\sing_{\LS_{\check Z}}}\QCoh(\LS_{\check Z}).
\end{equation}
\begin{thm}\label{thm:laf1}
    There is a commutative diagram
    \[
    \begin{tikzcd}
        \Dmod(\Bun_G)\arrow{r}{\mathbb L_G}\arrow{d}{\pi_{\Bun_G!}}&\IndCoh_\Nilp(\LS_{\check G})\arrow{d}{\Whit^\spec[2(\delta_G+\delta_N-\delta_Z)]}\\
        \Dmod(\Bun_{G_{\mathrm{abc}}})\arrow{r}{\mathbb L_{G_{\mathrm{abc}}}}&\QCoh(\LS_{\check Z}),
    \end{tikzcd}
    \]
    where $G_{\mathrm{abc}}=G/[G,G]^{\mathrm{sc}}$ and $\bL_{G_\abc}$ is the equivalence of Proposition~\ref{prop:GLC-disconnected}.
\end{thm}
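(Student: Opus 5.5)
Theorem~\ref{thm:laf1} should follow from Theorem~\ref{conj:lafforgue-Z} by passing to dual functors, so the plan is to dualize the diagram \eqref{eq:main-diagram} and identify each dual using the duality statements of \S\ref{subsec:duality}.

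First, take duals of all four functors in \eqref{eq:main-diagram}. On the automorphic side, the dual of $\pi_{\Bun_G}^!$ under miraculous duality on $\Bun_G$ and Verdier/miraculous duality on $\Bun_{G_\abc}$ should be $\pi_{\Bun_G!}$. The point is that $\pi_{\Bun_G}^!$ preserves compact objects up to the relevant twist, and the kernel $\Delta_!e_{\Bun_G}$ pulled back along $\pi\times\pi$ matches $\Delta_!e$ on $\Bun_{G_\abc}$ after pushforward. Here I expect a shift coming from $\Bun_{G_\abc}$ being smooth of dimension $\delta_Z$, where miraculous and Verdier duality differ by $[2\delta_Z]$.

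On the spectral side, I would apply Lemma~\ref{lem:serre-duality} to identify $(\iota_*^{\IndCoh}\Sigma^!)^\vee$ with $\Sigma_*^{\IndCoh}\iota^!$. I would then check directly, via the Koszul description \eqref{eq:singularity-functor}, \eqref{eq:singularity-functor2} and Lemma~\ref{lem:koszul-commutative}, that $\exp^\spec_{\LS_{\check Z}}=(j_*)^\shear$ is Serre-dual to $\sing_{\LS_{\check Z}}=(j^!)^\shear$. This uses that $j$ is an open embedding, so $j_*$ is right adjoint to $j^!$ and the two are dual under Serre duality, and that shearing is compatible with duality.

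Next, I would transport these dualities through the Langlands equivalences. For $\mathbb L_G$ I would use Proposition~\ref{lem:duality-intertwined}, which produces the Chevalley involution $\tau$ and the shift $d=4\delta_G+2\delta_N$. For the torus equivalence $\mathbb L_{G_\abc}$ I need the analogous statement: Fourier--Mukai intertwines Verdier duality with Serre duality on $\QCoh(\LS_{\check Z})$ up to negation $\tau_{\check Z}$ and a shift by $\delta_Z$-type terms. This I would verify on the connected part by classical Laumon--Rothstein, and on the finite part $\B\Gamma^\vee(1)$ by self-duality of the pairing in Proposition~\ref{prop:GLC-disconnected}.

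The Chevalley involutions should cancel. On $\LS_{\check Z}$, $\tau$ restricts to inversion. The correspondence \eqref{eq:lafforgue-correspondence} is $\tau$-equivariant after replacing the character $\check N\to\bG_a$ by its negative. Since $\exp^\spec$ is invariant under $-1$ on $\Omega\bA^1$ (its Koszul dual $j_*e$ is manifestly invariant), $\Whit^\spec$ commutes with $\tau$. Assembling the pieces gives the dual diagram with $\Whit^\spec$ on the right, up to a total shift.

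The main obstacle is bookkeeping the cohomological shift so that it comes out exactly $2(\delta_G+\delta_N-\delta_Z)$. It combines:
\begin{itemize}
\item the shift $[\delta_Z-\delta_G]$-type normalizations implicit in Theorem~\ref{conj:lafforgue-Z};
\item the shift $d=4\delta_G+2\delta_N$ from Proposition~\ref{lem:duality-intertwined};
\item the torus duality shift;
\item the $[2\delta_Z]$ discrepancy between miraculous and Verdier duality on $\Bun_{G_\abc}$.
\end{itemize}
I would fix these by evaluating both composites on a single test object, namely $\omega_{\Bun_G}$ against $\delta_{\triv_{\check Z}*}$. There the automorphic side is $\cC^\bullet_{\dR,c}(\Bun_G)$, whose top degree is known, so the answer can be compared against the spectral computation of \S\ref{sec:some-computations} for $\Poinc^\spec$ (Theorem~\ref{prop:image-of-constant-sheaf}).
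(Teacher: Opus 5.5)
Your plan matches the paper's own argument for this statement, namely its proof that Theorem~\ref{conj:lafforgue-Z} implies Theorem~\ref{thm:laf1}: dualize \eqref{eq:main-diagram}, match the miraculous dualities along $\pi_{\Bun_G}$ by base change as in \eqref{eq:miraculous-dual}, and use Lemma~\ref{lem:serre-duality} on the spectral side and Proposition~\ref{lem:duality-intertwined} on the top and bottom rows. You are also more explicit than the paper about three points it leaves implicit: the Serre duality between $\exp^\spec_{\LS_{\check Z}}$ and $\sing_{\LS_{\check Z}}$, the torus analogue of Proposition~\ref{lem:duality-intertwined} needed for $\mathbb L_{G_\abc}$, and the cancellation of the Chevalley involutions.

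Keep in mind that this is only the formal half of the argument. The substance is Theorem~\ref{conj:lafforgue-Z}, which the paper proves separately in \S\ref{subsec:main-proof} from Lemma~\ref{lem:spectral-poincare-is-linear}, Theorem~\ref{prop:image-of-constant-sheaf} and Theorem~\ref{thm:GLC-intertwines-actions}. Your test-object calibration of the shift would also need $\Whit^\spec(\Poinc^\spec(\delta_{\triv_{\check Z}*}))$, which \S\ref{sec:some-computations} does not compute (it computes $\CT^{-,\spec}\circ\Poinc^\spec$ instead), so the shift should be tracked directly through the individual dualities.
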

Lafforgue's two conjectures, Theorem~\ref{thm:laf1} and Theorem~\ref{conj:lafforgue-Z}, are equivalent by the dualities of \S\ref{subsec:duality}.

\begin{proof}[Proof that Theorem~\ref{conj:lafforgue-Z} implies Theorem~\ref{thm:laf1}]
First of all, the miraculous duality functors for $G$ and $G_\abc$ sit in a commutative diagram
\begin{equation}\label{eq:miraculous-dual}
\begin{tikzcd}
\Dmod(\Bun_G)^\vee\arrow{r}{\mathbb D_!}\arrow{d}{(\pi^*_{\Bun_G})^\vee}&\Dmod(\Bun_G)\arrow{d}{\pi_{\Bun_G!}}\\
\Dmod(\Bun_{G_\abc})^\vee\arrow{r}{\mathbb D_!}&\Dmod(\Bun_{G_\abc})
\end{tikzcd}
\end{equation}
Indeed, by base change
\[
(\pi_{\Bun_G}\times1)^*\Delta_{\Bun_{G_\abc}!}e_{\Bun_{G_\abc}}=(1\times\pi_{\Bun_G})_!\Delta_{\Bun_G!}(e_{\Bun_G})\in\Dmod(\Bun_G\times\Bun_{G_\abc}),
\]
so for any $F\in\Dmod(\Bun_G)^\vee$,
\begin{equation}
(F\otimes\operatorname{Id})(\pi_{\Bun_G}\times1)^*\Delta_{\Bun_{G_\abc}!}(e_{\Bun_{G_\abc}})=(1\times\pi_{\Bun_G})_!(F\otimes\operatorname{Id})\Delta_{\Bun_G!}e_{\Bun_G}.
\end{equation}
Now there is a commutative diagram
\[
\begin{tikzcd}
    \Dmod(\Bun_G)\arrow[swap,bend left=10]{rrr}{\tau[2(\delta_G+\delta_N)]\circ\mathbb L_G}\arrow{d}{\pi_{\Bun_G!}}&\arrow{l}{\mathbb D_![2\delta_G]}\Dmod(\Bun_G)^\vee\arrow{d}{(\pi_{\Bun_G}^!)^\vee}&\arrow{l}{\mathbb L_G^\vee}\IndCoh_\Nilp(\LS_{\check G})^\vee\arrow{d}{(\Poinc^\spec)^\vee}\arrow[swap]{r}{\mathbb D_{\mathrm{Serre}}}&\IndCoh_\Nilp(\LS_{\check G})\arrow{d}{\Whit^\spec}\\
    \Dmod(\Bun_{G_{\mathrm{abc}}})\arrow[bend right=10]{rrr}{\tau[2\delta_Z]\circ\mathbb L_{G_\abc}}&\arrow[swap]{l}{\mathbb D_![\delta_Z]}\Dmod(\Bun_{G_{\mathrm{abc}}})^\vee\arrow{r}{\mathbb L_{G_{\mathrm{abc}}}^\vee}&\arrow[swap]{l}{\mathbb L_{G_\abc}^\vee}\QCoh(\LS_{\check Z})^\vee\arrow{r}{\mathbb D_{\mathrm{Serre}}}&\QCoh(\LS_{\check Z}).
    \end{tikzcd}
\]
Here, the left square is \eqref{eq:miraculous-dual}, the middle square is by assumption, the right square is by Lemma~\ref{lem:serre-duality}, and the top and bottom are by Proposition~\ref{lem:duality-intertwined}. The large commutative square is the desired statement.
\end{proof}

\subsection{Computing the singular support}
As a basic sanity check for why Theorem~\ref{conj:lafforgue-Z} is reasonable, we have the following.

\begin{prop}\label{prop:lafforgue-functor-anti-temp}
    Lafforgue's functor $\Whit^\spec$ as defined in \eqref{eq:defn-of-whit^spec} vanishes on the subcategory $\IndCoh_{\Nilp_\irreg}(\LS_{\check G})$ of $\IndCoh_{\Nilp}(\LS_{\check G})$.
\end{prop}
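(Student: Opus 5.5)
We prove this by tracking singular support through each step of $\Whit^\spec=\sing_{\LS_{\check Z}}\circ\Sigma^{\IndCoh}_*\circ\iota^!$. We will show that for $\cF\in\IndCoh_{\Nilp_\irreg}(\LS_{\check G})$, the object $\Sigma_*^{\IndCoh}\iota^!\cF$ has singular support contained in the zero section of $\Sing((\LS_{\check Z}\times\Omega\bA^1)/\bG_m)$, i.e.\ lies in $\QCoh$ along the $\Omega\bA^1$-direction. Since $\sing_{\LS_{\check Z}}$ is by construction the quotient by exactly that subcategory (the relative version of the exact sequence $\QCoh(\Omega\bA^1/\bG_m)\to\IndCoh(\Omega\bA^1/\bG_m)\to\Vect$), it then kills the object.

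First we compute the singular codirections. By the functoriality of coherent singular support under $!$-pullback and $*$-pushforward (Arinkin--Gaitsgory), $\iota^!$ sends support $\mathcal N\subset\Sing(\LS_{\check G})$ into the image of $\mathcal N\times_{\LS_{\check G}}\LS_{\check Z\check N}$ under the codifferential $\Sing(\LS_{\check G})\times_{\LS_{\check G}}\LS_{\check Z\check N}\to\Sing(\LS_{\check Z\check N})$. Concretely, a point $A\in H^0\Gamma(X^\dR,\check\fg^*_\cP)$ maps to its restriction to $(\check\fz\oplus\check\fn)^*$. Next, $\Sigma^{\IndCoh}_*$ sends support $\mathcal M$ into the set of $B\in\Sing$ of the target whose pullback $d\Sigma^\bullet(B)$ lies in $\mathcal M$. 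The $\Omega\bA^1$ factor has $\Sing$ given by a line (dual to $\bA^1$, under $\Omega\bA^1=\Omega V$ with $V=\bA^1$, via the trace $\cC^\bullet_\dR(X)\to e[-2]$ and Koszul duality). A nonzero covector $c$ along $\Omega\bA^1$ pulls back to $c\cdot\psi$, viewed as a horizontal section of $(\check\fz\oplus\check\fn)^*$, where $\psi\colon\check\fn\to\bG_a$ is the non-degenerate character. The factor $\LS_{\check Z}$ contributes components in $\check\fz^*$.

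The key step is then to show that if $A\in\check\fg^*\cong\check\fg$ is nilpotent and restricts on $\check\fz\oplus\check\fn$ to $(z,c\psi)$ with $c\neq0$, then $A$ is regular nilpotent. Identifying $\check\fg^*\cong\check\fg$ by an invariant form, $\check\fn^*\cong\check\fg/\check\fb^-$ (or $\check\fn^-$ modulo $\check\fb$, depending on conventions). So $A\in c f+\check\fb$, where $f$ is the principal nilpotent corresponding to $\psi$. Since $A$ is nilpotent, $z=0$ and $A\in cf+\check\fn'$ with $\check\fn'$ the appropriate nilradical, i.e.\ $A$ lies in $f+\check\fb$ up to scaling. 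Kostant's theorem says every element of $f+\check\fb$ is regular, so $A$ is regular. Hence for $\cF$ with support in $\Nilp_\irreg$, no such $c\ne0$ occurs, and the support of $\Sigma_*\iota^!\cF$ lies over $c=0$. The $\bG_m$-quotients and the twisting by $\check Z_{\mathrm{enh}}$ do not affect this, because singular supports are conical and the $\bG_m$-action only rescales $c$.

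The main obstacle is making the singular support functoriality rigorous in this setting. Here $\iota$ is not quasi-smooth over a point in an obvious way, and the stacks involve $\bG_m$-quotients and the disconnected $\check Z$. We must also match precisely the coordinate $c$ on $\Sing(\Omega\bA^1)$, obtained through Lemma~\ref{lem:LS_Ga}, with the pullback of $\psi$. To handle these points we would first check that $\iota$ and $\Sigma$ are quasi-smooth (mapping stacks of maps between quasi-smooth classifying stacks), so the Arinkin--Gaitsgory estimates apply. We would also note that $\Sigma$ is schematic over the $\Omega\bA^1$ factor, and treat the $\LS_{\check Z}$ factor by base change, reducing to its identity component.
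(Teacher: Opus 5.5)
Your proposal is correct and is essentially the paper's proof. You compute the singular codifferentials of $\iota$ (restriction to $\Lie(\check Z\check N)^*$) and of $\Sigma$ (the $\Omega\bA^1$-covector pulls back to a multiple of $\psi$), use Kostant's regularity of $f+\check\fb$ together with $z=0$ for nilpotent $A$ to get $\Sing(\Sigma)^{-1}(\Sing(\iota)(\Nilp_\irreg))=0$, and conclude via Arinkin--Gaitsgory functoriality that $\Sigma^{\IndCoh}_*\iota^!$ lands in $\QCoh$, which $\sing_{\LS_{\check Z}}$ kills; for the $\bG_m$-quotient the paper simply notes that $\LS_{\check Z\check N}\to\LS_{\check Z\check N}/\check\rho(\bG_m)$ is smooth, so singular codirections can be computed upstairs. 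There are two inessential slips: a nilpotent $A\in cf+\check\fb$ need not lie in $cf+\check\fn$ (already false in $\mathfrak{sl}_2$), though you only use $A\in cf+\check\fb$; and $\Sigma$ is not schematic (its fibers contain $\B\bG_a$-type factors, e.g.\ $\LS_{\bG_a}\simeq\Omega\bA^1\times H^1_\dR(X)\times\B\bG_a$), so that remark in your last paragraph should not be relied on.
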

\begin{proof}
We analyze the behavior of singular supports under the correspondence~\eqref{eq:lafforgue-correspondence}; in particular, under the functors $\iota^!$ and $\Sigma_*^{\IndCoh}$. 

Because $\LS_{\check Z\check N}\to\LS_{\check Z\check N}/\check\rho(\bG_m)$ is smooth, \cite[\S8.3.1]{singular-support} implies the singular co-differential is an isomorphism
\[
\Sing(\LS_{\check Z\check N}/\check\rho(\bG_m))\times_{\LS_{\check Z\check N}/\check\rho(\bG_m)}\LS_{\check Z\check N}\simeq\Sing(\LS_{\check Z\check N}).
\]
Thus, the singular co-differential of $\iota$ over an $S$-point $(\cP,\nabla)$ of $\LS_{\check Z\check N}$ is the obvious homomorphism
\[
\Sing(\iota)_{(\cP,\nabla)}\colon H^0(\Gamma(X^\dR\times S,\check\fg^*_\cP))\to H^0(\Gamma(X^\dR\times S,\Lie(\check Z\check N)^*_\cP)).
\]
Similarly, the singular co-differential of $\Sigma$ over an $S$-point $(\cP,\nabla)$ of $\LS_{\check Z\check N}$ is
\[
\Sing(\Sigma)_{(\cP,\nabla)}\colon H^0(\Gamma(X^\dR\times S,\Lie(\check Z)^*_\cP))\oplus\cO_S\to H^0(\Gamma(X^\dR\times S,\Lie(\check Z\check N)^*_\cP)).
\]
where the morphism from $\cO_S$ is induced from the non-degenerate character $\check N\to\bG_a$.

The Lie-theoretic fact that if $f\in\check\fn^-$ is a principal nilpotent then $(\bA^1\cdot f+\check\fb)\cap\check\cN_\irreg=\check\cN_\irreg\cap\check\fb$ (because all elements in $f+\check\fb$ are regular) implies that
\[
\Sing(\Sigma)_{(\cP,\nabla)}^{-1}\big(\Sing(\iota)_{(\cP,\nabla)}(\Nilp_\irreg)\big)=0.
\]
Thus, by the functoriality of singular support under pullback and pushforward as in \cite[Theorem~1.3.11]{singular-support}, the functor $\Sigma_*^{\IndCoh}\iota^!$ out of $\IndCoh_{\Nilp_\irreg}\!(\LS_{\check G})$ factors through
\[\IndCoh_0\big((\LS_{\check Z}\times\Omega\bA^1)/\bG_m\big)=\QCoh\!\big((\LS_{\check Z}\times\Omega\bA^1)/\bG_m\big).\] 
Since the singularity functor $\sing$ kills the subcategory
\[\QCoh\!\big((\LS_{\check Z}\times\Omega\bA^1)/\bG_m\big)\subset\IndCoh_{0\times\bA^1}\!\big((\LS_{\check Z}\times\Omega\bA^1)/\bG_m\big),\] we conclude that $\Whit^\spec$ kills $\IndCoh_{\Nilp_\irreg}(\LS_{\check G})$.
\end{proof}

An immediate consequence of Proposition~\ref{prop:lafforgue-functor-anti-temp} and Theorem~\ref{thm:joakim} is the following.
\begin{cor}\label{cor:poincare-factors}
    The functor $\mathbb L_G^{-1}\circ\Poinc^\spec$, where $\Poinc^\spec$ is as in Theorem~\ref{conj:lafforgue-Z}, factors through
    \[
    \Dmod_{\fz(\fg)}(\Bun_G)\subset\Dmod(\Bun_G).
    \]
\end{cor}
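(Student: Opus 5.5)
The plan is to pass through the maximally anti-tempered part and use duality to turn the statement about $\Poinc^\spec$ into the vanishing statement about $\Whit^\spec$ already proved in Proposition~\ref{prop:lafforgue-functor-anti-temp}. By Theorem~\ref{thm:joakim}, $\Dmod_{\fz(\fg)}(\Bun_G)=\Dmod(\Bun_G)^{\mat}$. This is the right orthogonal of $\Dmod(\Bun_G)^{\qt}$: it is the full subcategory of objects $\cG$ with $\Hom(\cH,\cG)=0$ for all $\cH\in\Dmod(\Bun_G)^{\qt}$, equivalently the kernel of the right adjoint $\qt$ of the inclusion. Since $\mathbb L_G$ matches $\Dmod(\Bun_G)^{\qt}$ with $\IndCoh_{\Nilp_\irreg}(\LS_{\check G})$, it therefore suffices to show the following: for every $\cF\in\QCoh(\LS_{\check Z})$, the object $\Poinc^\spec(\cF)$ is right orthogonal to $\IndCoh_{\Nilp_\irreg}(\LS_{\check G})$. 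Equivalently, $\Poinc^\spec(\cF)$ should lie in the essential image of the fully faithful right adjoint $\IndCoh_\Nilp(\LS_{\check G})/\IndCoh_{\Nilp_\irreg}(\LS_{\check G})\hookrightarrow\IndCoh_\Nilp(\LS_{\check G})$.

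To check this, I would rewrite $\Hom(\cH,\Poinc^\spec(\cF))$ for $\cH\in\IndCoh_{\Nilp_\irreg}$ using adjunctions along the correspondence \eqref{eq:lafforgue-correspondence}. The steps are as follows.

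\begin{enumerate}
\item Since $\iota$ is schematic and proper-ish, $\iota_*^{\IndCoh}$ is right adjoint to $\iota^!$, so
\[
\Hom\big(\cH,\iota_*^{\IndCoh}\Sigma^!\exp^\spec_{\LS_{\check Z}}(\cF)\big)\simeq\Hom\big(\iota^!\cH,\Sigma^!\exp^\spec_{\LS_{\check Z}}(\cF)\big).
\]
\item Next I pass $\Sigma^!$ across. Rather than requiring properness of $\Sigma$, it is cleaner to argue with singular supports. The proof of Proposition~\ref{prop:lafforgue-functor-anti-temp} shows that $\iota^!\cH$ has singular support in $\Sing(\iota)(\Nilp_\irreg)$, and that $\Sing(\Sigma)^{-1}$ of this set is zero.
\item Hence $\Sigma_*^{\IndCoh}\iota^!\cH$ lies in $\QCoh((\LS_{\check Z}\times\Omega\bA^1)/\bG_m)$. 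Adjunction should then identify the Hom with $\Hom(\Sigma^{\IndCoh}_*\iota^!\cH,\exp^\spec_{\LS_{\check Z}}(\cF))$.
\item Finally, $\exp^\spec_{\LS_{\check Z}}=(j_*)^\shear$ is right adjoint to $\sing_{\LS_{\check Z}}=(j^!)^\shear$. So this Hom equals $\Hom(\sing_{\LS_{\check Z}}\Sigma_*^{\IndCoh}\iota^!\cH,\cF)$, which vanishes because $\sing$ kills $\QCoh$.
\end{enumerate}

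An alternative route avoids the adjunction issues in step 3. Use the duality square from the proof that Theorem~\ref{conj:lafforgue-Z} implies Theorem~\ref{thm:laf1}: $\Poinc^\spec$ is, up to $\mathbb D_{\mathrm{Serre}}$ and a shift, the dual functor of $\Whit^\spec$ (Lemma~\ref{lem:serre-duality}). Under Proposition~\ref{lem:duality-intertwined}, $\mathbb L_G^{-1}\circ\Poinc^\spec$ then corresponds, via miraculous duality, to the dual of $\Whit^\spec\circ\mathbb L_G\circ\tau$. That functor kills $\Dmod(\Bun_G)^{\qt}$ by Proposition~\ref{prop:lafforgue-functor-anti-temp}, since $\tau$ preserves $\Nilp_\irreg$. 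One then checks that miraculous duality exchanges the quasi-tempered and maximally anti-tempered parts, i.e.\ the dual of the exact sequence for $\Sph_G$ is compatible with $\mathbb D_{\Bun_G,!}$. This holds because the Hecke action is self-dual up to the Cartan involution, which preserves $\Sph_G^{\qt}$. Consequently, a functor out of $\Dmod(\Bun_G)$ killing the quasi-tempered part corresponds under $\mathbb D_!$ to objects in the anti-tempered part.

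The main obstacle is this last compatibility, or equivalently the adjunction bookkeeping in step 3 of the direct approach: making sure the orthogonality/duality swaps the correct sides of the tempered/anti-tempered decomposition. I would try the duality route first, since the needed pieces are essentially already assembled in the paper.
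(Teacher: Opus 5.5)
Your duality route is correct and is essentially the paper's argument. The paper treats the corollary as an immediate consequence of Proposition~\ref{prop:lafforgue-functor-anti-temp} and Theorem~\ref{thm:joakim}: the functional dual to $\mathbb L_G^{-1}\Poinc^\spec(\cF)$ factors through $\Whit^\spec$, hence kills the quasi-tempered part, hence the object is maximally anti-tempered. Your Hecke-compatibility of $\mathbb D_{\Bun_G,!}$ is one way to justify that last implication, which the paper leaves implicit. (Minor point: the functional is $\Whit^\spec\circ\tau\circ\mathbb L_G$, since $\tau$ acts on the spectral side.)

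Your direct route, however, already breaks at step 1, not only at step 3.
\begin{itemize}
\item \textbf{Step 1.} The adjunction is backwards: for proper $f$ one has $f_*^{\IndCoh}\dashv f^!$, not $f^!\dashv f_*^{\IndCoh}$. Moreover $\iota$ is not proper once $\check G$ has semisimple rank at least $2$. Over the trivial local system its fiber is, classically, a torsor under the torus $\check T/(\check Z\cdot\check\rho(\bG_m))$ over $\check G/\check B$.
\item \textbf{Step 3.} $\Sigma$ is not proper either, since its fibers contain $H^1_\dR(X)\times\B\bG_a$. So the adjunction $\Sigma_*^{\IndCoh}\dashv\Sigma^!$ is unavailable, and knowing that $\Sigma_*^{\IndCoh}\iota^!\mathcal H$ is quasi-coherent does not substitute for it.
\end{itemize}

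The repair is to test against the compact generators $\mathcal H\in\Coh_{\Nilp_\irreg}(\LS_{\check G})$ and replace $\Hom(\mathcal H,-)$ by the Serre pairing $\Gamma(\LS_{\check G},\mathbb D^{\mathrm{Se}}\mathcal H\otimes^!-)$, where $\mathbb D^{\mathrm{Se}}\mathcal H$ is the Serre-dual coherent sheaf.
\begin{itemize}
\item Serre duality on $\Coh(\LS_{\check G})$ preserves singular support, so $\mathbb D^{\mathrm{Se}}\mathcal H$ again has singular support in $\Nilp_\irreg$.
\item Lemma~\ref{lem:serre-duality}, together with the duality between $\exp^\spec_{\LS_{\check Z}}$ and $\sing_{\LS_{\check Z}}$, moves the pairing across $\iota$ and $\Sigma$ with no properness needed.
\item This identifies the pairing with a pairing of $\cF$ against $\Whit^\spec(\mathbb D^{\mathrm{Se}}\mathcal H)=0$.
\end{itemize}
This shows directly that $\Poinc^\spec(\cF)$ is right orthogonal to $\IndCoh_{\Nilp_\irreg}(\LS_{\check G})$. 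Transporting along $\mathbb L_G$ then gives the claim, with no need for miraculous duality or the Hecke-compatibility check.
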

We also record the following further consequence.
\begin{cor}\label{cor:poincare-factors2}
    The object $\mathbb L_G^{-1}\circ\Poinc^\spec(\delta_{\triv_{\check Z}*})$ lies in the subcategory
    \[
    \Dmod_0(\Bun_G)\subset\Dmod(\Bun_G).
    \]
\end{cor}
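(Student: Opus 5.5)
The plan is to combine Corollary~\ref{cor:poincare-factors} with a second, independent constraint on the singular support, and then intersect the two. Set $\cF:=\mathbb L_G^{-1}\Poinc^\spec(\delta_{\triv_{\check Z}*})$. By Corollary~\ref{cor:poincare-factors}, $\cF\in\Dmod_{\fz(\fg)}(\Bun_G)$. I will show separately that $\cF\in\Dmod_{\Nilp}(\Bun_G)$, i.e.\ that its singular support lies in the global nilpotent cone.

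Granting this, the conclusion is pointwise linear algebra. A Higgs field $\phi$ that is both nilpotent and factors through the center $\fz(\fg)$ must vanish, because a central element of the reductive Lie algebra $\fg$ is semisimple. Hence $\cF$ lies in $\Dmod_{\fz(\fg)}(\Bun_G)\cap\Dmod_{\Nilp}(\Bun_G)$. I expect this intersection to be $\Dmod_0(\Bun_G)$. Both subcategories are defined by the limit over smooth charts $S\to\Bun_G$, and on each chart we have $\Dmod_{\cN_1}(S)\cap\Dmod_{\cN_2}(S)=\Dmod_{\cN_1\cap\cN_2}(S)$, since singular support on smooth schemes is an honest closed subset. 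Taking the limit over charts then gives the claim.

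To obtain nilpotent singular support, I will use restricted variation. By \cite{AGKRRV} and \cite{GLC1}, $\mathbb L_G$ identifies $\Dmod_\Nilp(\Bun_G)$ with $\IndCoh_\Nilp(\LS_{\check G}^{\mathrm{restr}})$. The latter is the full subcategory of objects set-theoretically supported on the union of formal completions of $\LS_{\check G}$ at semisimple local systems. So it suffices to show $\Poinc^\spec(\delta_{\triv_{\check Z}*})$ has this support, which I would argue in three steps.
\begin{enumerate}
\item The object $\exp^\spec_{\LS_{\check Z}}(\delta_{\triv_{\check Z}*})$ is supported over $\{\triv\}\times\Omega\bA^1$.
\item Hence $\Sigma^!$ of it is supported on the preimage of $\triv_{\check Z}$ in $\LS_{\check Z\check N}/\check\rho(\bG_m)$. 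Up to the $\bG_m$-quotient and the finite data of the $\mu_2$-twist, this preimage is $\LS_{\check N}$.
\item The $\iota^{\IndCoh}_*$-pushforward is then supported on the image of $\LS_{\check N}$ in $\LS_{\check G}$. Every $e$-point of this image is a unipotent $\check G$-local system, i.e.\ an iterated extension whose semisimplification is trivial, possibly twisted by the central character coming from $\check\rho$. So the image lies in the formal completion at a single semisimple point, and therefore in $\LS_{\check G}^{\mathrm{restr}}$.
\end{enumerate}
This support condition is compatible with nilpotent singular support, which already holds since $\Poinc^\spec$ lands in $\IndCoh_\Nilp$. It therefore places the object in $\IndCoh_\Nilp(\LS^{\mathrm{restr}}_{\check G})$, as needed.

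The main obstacle is making the support argument precise for $\iota^{\IndCoh}_*$ of an ind-coherent sheaf on the non-quasi-compact, non-classical stack $\LS_{\check N}$. Set-theoretic support is controlled only on underlying reduced points, so I will have to check that the restricted-variation criterion of \cite{AGKRRV} is indeed detected on $e$-points, namely on semisimplifications. I would first try to verify this by factoring $\iota$ through $\LS_{\check B}$, or through $\LS_{\check Z_{\mathrm{enh}}\ltimes_{2\check\rho}\check N}$, and using that pushforward along these maps preserves the property of lying over a fixed semisimple point.
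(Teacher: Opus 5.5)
There is a gap, and it sits in the step you yourself flag as the main obstacle.

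Your overall architecture matches the paper's. You obtain $\fz(\fg)$ from Corollary~\ref{cor:poincare-factors}, reduce nilpotent singular support to showing $\Poinc^\spec(\delta_{\triv_{\check Z}*})\in\IndCoh_\Nilp(\LS^{\mathrm{restr}}_{\check G})$ via \cite{AGKRRV}, and then intersect. Your intersection step is fine and more explicit than the paper. It implicitly uses that the codifferential of a smooth chart is fiberwise injective, so images of intersections are intersections of images.

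The gap is the restricted-variation step, which you leave open. As you state it, the characterization of $\IndCoh(\LS^{\mathrm{restr}}_{\check G})$ is wrong. Take a nonsplit unipotent local system, e.g.\ one induced from a nonzero class in $H^1_\dR(X)$ via $\bG_a\subset\check N$. It is an $e$-point of $\LS_{\check G}$ not isomorphic to $\triv$, so it does \emph{not} lie in the formal completion at the point $\triv$. Hence the inference in step~3, ``semisimplification trivial, hence in the formal completion at a single semisimple point'', fails as written.

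Your support computation would suffice if you used the correct form of the de Rham comparison in \cite{AGKRRV}. That form identifies each component of $\LS^{\mathrm{restr}}_{\check G}$ with the formal completion of $\LS_{\check G}$ along the closed locus of \emph{all} local systems with a fixed semisimplification. With it, your step~3 lands in finitely many such loci, accounting for the $\mu_2$-twists. But you neither state nor cite this, and your fallback of pushing forward along $\LS_{\check B}$ does not supply it.

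The paper avoids supports entirely. Restricted variation is a condition on $S$-families, and for a unipotent group it is automatic: $\LS^{\mathrm{restr}}_{\check N}\simeq\LS_{\check N}$. The reason is that $\Rep(\check N)$ is generated under colimits by the trivial representation. Any symmetric monoidal functor $\Rep(\check N)\to\QCoh(S)\otimes\Dmod(X)$ sends that representation to $\cO_S\boxtimes\omega_X\in\QCoh(S)\otimes\operatorname{QLisse}(X)$.

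By functoriality, the map $(\LS_{\mu_2}\times\LS_{\check N})/\check\rho(\bG_m)\to\LS_{\check G}$ factors through $\LS^{\mathrm{restr}}_{\check G}$. So $\iota^{\IndCoh}_*$ of \emph{any} object lands in $\IndCoh(\LS^{\mathrm{restr}}_{\check G})$, and combining this with nilpotent singular support finishes the argument. This needs no detection on $e$-points and no de Rham-specific description of $\LS^{\mathrm{restr}}_{\check G}$ as a union of formal completions. It therefore works verbatim wherever $\LS^{\mathrm{restr}}$ is defined.
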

\begin{proof}
    Corollary~\ref{cor:poincare-factors} already tells us the object lies in the subcategory $\Dmod_{\fz(\fg)}(\Bun_G)$. It remains to check the object also lands in the subcategory $\Dmod_\Nilp(\Bun_G)$ of D-modules with nilpotent singular support. By \cite[Proposition~14.5.3]{AGKRRV} it suffices to check that $\Poinc^\spec\in\IndCoh_\Nilp(\LS_{\check G}^{\mathrm{restr}})$ where $\LS_{\check G}^{\mathrm{restr}}$ is the stack of $\check G$-local systems with restricted variation introduced in \cite[\S1.4]{AGKRRV}.

This follows from observing that $\Poinc^\spec(\delta_{\triv_{\check Z}*})$ is defined by push-pull along the diagram
\[
\LS_{\check G}\leftarrow(\LS_{\mu_2}\times\LS_{\check N})/\check\rho(\bG_m)\to (\LS_{\mu_2}\times\Omega\bA^1)/\bG_m
\]
and the observation that the morphism $(\LS_{\mu_2}\times\LS_{\check N})/\check\rho(\bG_m)\to\LS_{\check G}$ factors through $\LS_{\check G}^{\mathrm{restr}}$, since 
\begin{equation}\label{eq:iso-of-restricted-stack}\LS_{\check N}^{\mathrm{restr}}\simeq\LS_{\check N}.\end{equation}
Indeed, any monoidal functor $\Rep(\check N)\to\QCoh(S)\otimes\Dmod(X)$ factors through $\QCoh(S)\otimes\operatorname{QLisse}(X)$, since $\Rep(\check N)$ is generated under colimits by the trivial representation, which must be sent to $\cO_S\boxtimes\omega_X\in\QCoh(S)\otimes\operatorname{QLisse}(X)$ (when $\check N=\bG_a$ the isomorphism \eqref{eq:iso-of-restricted-stack} is observed in \cite[\S1.5.2]{AGKRRV}).
\end{proof}

\section{Computation of constant term functors}\label{sec:some-computations}
In Corollary~\ref{cor:poincare-factors2} we constructed an object $\Poinc^\spec(\delta_{\triv_{\check Z}*})\in\IndCoh_\Nilp(\LS_{\check G})$ whose image under the geometric Langlands functor $\mathbb L_G^{-1}$ is quasi-lisse. Our goal is to show the image $\mathbb L_G^{-1}\circ\Poinc^\spec(\delta_{\triv_{\check Z}*})\in\Dmod(\Bun_G)$ is simply the constant sheaf $\omega_{\Bun_G}$. To do so, we compute the constant terms of both the spectral object $\Poinc^\spec(\delta_{\triv_{\check Z}*})$ and the automorphic object $\omega_{\Bun_G}$ and check they match under $\mathbb L_T$. This section is devoted to that computation.
\subsection{On the automorphic side} Consider the diagram of stacks
\[
\begin{tikzcd}
    &\Bun_{B^-}\arrow[swap]{dl}{\pp^-}\arrow{dr}{\qq^-}\\
    \Bun_G&&\Bun_T,
\end{tikzcd}
\]
where $\pp^-$ is quasi-compact and representable and $\qq^-$ is smooth \cite[Remark~0.2.9]{constant-term}. Let $\rr$ denote the morphism $\Bun_T\to\Bun_{B^-}$ induced from the inclusion $T\subset B^-$, which is a section of $\qq^-$.

We let $\CT^{-,\mathrm{nv}}_*:=\qq^-_*(\pp^-)^!$ and $\Eis_!^{-,\mathrm{nv}}:=(\pp^-)_!\circ(\qq^-)^*$ be the na\"ive constant term and Eisenstein series functors, and following \cite[\S8.1.6]{GLC3}, we introduce corrected versions,
\[\CT^-_*:=\CT^{-,\mathrm{nv}}_*[-\operatorname{dim.rel.}(\qq^-)],\ \ \Eis^-_!:=\Eis^{-,\mathrm{nv}}_![\operatorname{dim.rel.}(\qq^-)].\] 
Here, as usual, $\operatorname{dim.rel.}(\qq^-)$ is considered a locally constant function on $\Bun_T$.

\begin{prop}\label{prop:eisenstein-and-delta}
    There is an equivalence $\CT^-_*\simeq(\pp^-\circ\rr)^![\operatorname{dim.rel.}(\qq^-)]$ on the subcategory $\Dmod_{\fz(\fg)}(\Bun_G)\subset\Dmod(\Bun_G)$. 
\end{prop}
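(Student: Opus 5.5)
The plan is to show that for $\cF\in\Dmod_{\fz(\fg)}(\Bun_G)$ the pullback $(\pp^-)^!\cF$ is constant along the fibers of $\qq^-$, i.e. pulled back from $\Bun_T$, and then to compute $\qq^-_*$ of such an object using the section $\rr$.

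\emph{Step 1: $\pp^-$ is non-characteristic.} Over a point $\cP_{B^-}\in\Bun_{B^-}$ with induced $G$-bundle $\cP_G$, the co-differential of $\pp^-$ on cotangent fibers is
\[
H^0(X,\fg^*_{\cP_G}\otimes\omega_X)\to H^0(X,(\fb^-)^*_{\cP_{B^-}}\otimes\omega_X).
\]
Identifying $\fg^*\simeq\fg$ by an invariant form, its kernel consists of Higgs fields valued in $(\fb^-)^\perp=\fn^-$. A Higgs field that is both central and valued in $\fn^-$ must vanish, since $\fz(\fg)\cap\fn^-=0$. Hence $\pp^-$ is non-characteristic with respect to the conical subset $\cN_{\fz(\fg)}\subset T^*\Bun_G$ of central Higgs fields. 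Lemma~\ref{lem:SS-functorial} then shows that $(\pp^-)^!\cF$ has singular support in the image of $\cN_{\fz(\fg)}\times_{\Bun_G}\Bun_{B^-}$ under $d(\pp^-)^\bullet$. A central element $z$ restricts on $\fb^-$ to a functional that kills $\fn^-$, because $(z,\fn^-)=0$. Its image therefore lies in the image of the co-differential of $\qq^-$, namely
\[
T^*\Bun_T\times_{\Bun_T}\Bun_{B^-}\hookrightarrow T^*\Bun_{B^-}.
\]
So $(\pp^-)^!\cF$ has singular support contained in the covectors pulled back along the smooth map $\qq^-$.

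\emph{Step 2: descent along $\qq^-$.} Next I will show that an object $\cG\in\Dmod(\Bun_{B^-})$ whose singular support lies in $(d\qq^-)^\bullet(T^*\Bun_T)$ satisfies
\[
\cG\simeq(\qq^-)^!\rr^!\cG.
\]
Singular support in the pulled-back covectors means $\cG$ is lisse along the fibers of $\qq^-$. Those fibers are $\Bun_{N^-}$ twisted by a $T$-bundle, so they are iterated stacky quotients $V/W$ of vector spaces. In particular they are connected with contractible homotopy type, so a fiberwise lisse object is pulled back from the base, and restricting to the section $\rr$ identifies the base object as $\rr^!\cG$. An alternative route to the same conclusion is the contracting $\bG_m$-action on $\Bun_{B^-}$ given by a dominant cocharacter of $Z(T)$ scaling $N^-$: its fixed locus is $\rr(\Bun_T)$, and the lisse-along-fibers condition makes $\cG$ monodromic, hence equivariant.

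\emph{Step 3: computing the pushforward.} Since the fibers of $\qq^-$ are cohomologically trivial, $\qq^-_*(\qq^-)^*\simeq\operatorname{Id}$. Using $(\qq^-)^!=(\qq^-)^*[2\operatorname{dim.rel.}(\qq^-)]$, we get
\[
\CT^{-,\mathrm{nv}}_*\cF=\qq^-_*(\qq^-)^!\rr^!(\pp^-)^!\cF\simeq(\pp^-\circ\rr)^!\cF\,[2\operatorname{dim.rel.}(\qq^-)].
\]
Applying the correction shift $[-\operatorname{dim.rel.}(\qq^-)]$ gives the claimed formula $\CT^-_*\simeq(\pp^-\circ\rr)^![\operatorname{dim.rel.}(\qq^-)]$.

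I expect Step 2 to be the main obstacle. It requires making rigorous, on an algebraic stack whose fibers over $\Bun_T$ are not schemes, that singular support contained in the pulled-back covectors forces the object to be a pullback from $\Bun_T$. It also requires justifying $\qq^-_*(\qq^-)^*\simeq\operatorname{Id}$ even though $\qq^-$ is not quasi-compact on fibers. I would first try to reduce to the contraction principle (Braden / Drinfeld–Gaitsgory) for the $\bG_m$-action above. The key check there is that the singular-support condition implies $\bG_m$-monodromicity, from which $\qq^-_*\simeq\rr^*$ follows for the contracting action.
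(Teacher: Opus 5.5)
Steps 1 and 3 are fine, and your shift bookkeeping lands on the right answer. Step 2, however, has a genuine gap. The principle ``singular support in $(d\qq^-)^\bullet(T^*\Bun_T)$ plus contractible fibers implies pulled back from the base'' is false for D-modules, because affine spaces carry irregular lisse D-modules.

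Here is a counterexample. On $\bA^1_s\times\bA^1_t\to\bA^1_s$, take the module $\mathcal D/\mathcal D(\partial_t-s)$, i.e.\ $e^{st}$. Its characteristic variety is $\{\tau=0\}$, which is exactly the pulled-back covectors, and the fibers are contractible. Yet its restriction to the fiber over $s_0\neq 0$ is a nontrivial exponential D-module, so it is not a pullback. The fibers of $\qq^-$ are built out of vector-space stacks $H^1/H^0$, so the same phenomenon is a priori possible there. Your topological argument only works in the regular holonomic world.

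Your proposed fix via the contraction principle has the logic reversed. The singular support condition does \emph{not} imply $\bG_m$-monodromicity: the example above is lisse along the fibers but not monodromic for the scaling of $t$. Also, ``monodromic hence equivariant'' is not a valid implication, and it is not needed.

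The missing input is that \emph{every} object of $\Dmod(\Bun_{B^-})$ is automatically $\bG_m$-monodromic. This is because the contracting action comes from conjugation by $T\subset B^-$, i.e.\ by inner automorphisms. Hence the contraction principle gives $\qq^-_*\simeq\rr^*$ on all of $\Dmod(\Bun_{B^-})$, with no singular support hypothesis. This is how the paper proceeds, and it makes descent unnecessary.

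All that remains is $\rr^*(\pp^-)^!\cF\simeq\rr^!(\pp^-)^!\cF[-2\operatorname{dim.rel.}(\rr)]$. This follows from Lemma~\ref{lem:SS-functorial} once $\rr$ is non-characteristic for the singular support you found in Step 1, and that is immediate from your own computation. Covectors on $\fb^-$ that factor through $\fb^-\to\ft$ meet the conormal directions of $\rr$ (those vanishing on $\ft$) only in zero. Combining this with $\operatorname{dim.rel.}(\rr)=-\operatorname{dim.rel.}(\qq^-)$ gives the stated shift.

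With automatic monodromicity in hand, your descent statement could presumably be repaired, but that is more work than the statement requires.
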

\begin{proof}
Let $\cF\in\Dmod_{\fz(\fg)}(\Bun_G)$ be arbitrary. As observed in \cite[\S4.1.6]{constant-term}, any object in $\Dmod(\Bun_{B^-})$ is automatically $\bG_m$-monodromic. Thus, by the contraction principle (see \cite[\S4.1]{constant-term} and \cite[Theorem~C.5.3]{compact-generation}) applied to $(\pp^-)^!\cF$,
\[
\CT^{-,\mathrm{nv}}_*\cF=\qq^-_*(\pp^-)^!\cF\simeq\rr^*(\pp^-)^!\cF.
\]
Moreover, $\pp^-$ is non-characteristic with respect to $\fz(\fg)\subset T^*\Bun_G$ in the sense of Definition~\ref{defn:non-char}. Indeed, the fiber of the co-differential $d(\pp^-)^\bullet$ at a $B^-$-bundle $\cP$ is simply $R\Gamma(X,\fg_\cP\otimes\Omega^1_X)\to R\Gamma(X,(\fg/\fn^-)_\cP\otimes\Omega^1_X)$ whose kernel on $H^0$ is $ H^0(X,\fn^-_\cP\otimes\Omega^1_X)$, which intersects $ H^0(X,\fz(\fg)\otimes\Omega^1_X)$ trivially. Thus, by Lemma~\ref{lem:SS-functorial}, the object $(\pp^-)^!\cF$ has singular support in $\fz(\fg)\subset T^*\Bun_{B^-}$, the locus of $T^*\Bun_{B^-}$ consisting of pairs $(\cP,\theta)$ of a $B^-$-bundle $\cP$ and $\theta\in \Gamma(X,(\fg/\fn^-)_\cP\otimes\Omega^1_X)$ such that $\theta$ lies in the image of $\Gamma(X,\fz(\fg)\otimes\Omega^1_X)$.

Furthermore, $\rr$ is non-characteristic with respect to $\fz(\fg)\subset T^*\Bun_{B^-}$. Indeed, the co-differential
   \[
   d\rr^\bullet\colon\Bun_T\times_{\Bun_{B^-}}T^*\Bun_{B^-}\to T^*\Bun_T
   \]
   on a $T$-bundle $\cP$ is $\Gamma(X,(\fg/\fn^-)_\cP\otimes\Omega^1_X)\to\Gamma(X,\ft_\cP\otimes\Omega^1_X)$, so $(d\rr^\bullet)^{-1}(0)$ classifies a $T$-bundle $\cP$ and a section $\phi\in\Gamma(X,\fn_\cP\otimes\Omega^1_X)$. This has trivial intersection with $\fz(\fg)\times_{\Bun_{B^-}}\Bun_T$. Thus again by Lemma~\ref{lem:SS-functorial},
   \[
   \rr^*(\pp^-)^!\cF\simeq\rr^!(\pp^-)^!\cF[-2\operatorname{dim.rel.}(\rr)]=(\pp^-\circ\rr)^!\cF[-2\operatorname{dim.rel.}(\rr)].
   \]   
   The corrected constant term is thus
   \[
   \CT^-_*\cF\simeq(\pp^-\circ\rr)^!\cF[-2\operatorname{dim.rel.}(\rr)-\operatorname{dim.rel.}(\qq^-)]
   \]
   Finally, since $\rr$ is a section of $\qq^-$ we see $\operatorname{dim.rel.}(\rr)=-\operatorname{dim.rel.}(\qq^-)$.
\end{proof}

\subsection{On the spectral side} 

\subsubsection{Spectral constant term}\label{subsec:spectral-constant} There is a diagram of stacks
\[
\begin{tikzcd}
    &\LS_{\check B^-}\arrow[swap]{dl}{\pp^{-,\spec}}\arrow{dr}{\qq^{-,\spec}}\\
    \LS_{\check G}&&\LS_{\check T},
\end{tikzcd}
\]
and as in \cite[\S12.1.1]{GLC3} we consider the spectral constant term functor
\[
\CT^{-,\spec}:=(\qq^{-,\spec})^{\IndCoh}_*\circ(\pp^{-,\spec})^!\colon\IndCoh_\Nilp(\LS_{\check G})\to\QCoh(\LS_{\check T}).
\]
We also consider the dual $\Eis^{-,\spec}=(\pp^{-,\spec})_*^{\IndCoh}\circ(\qq^{-,\spec})^!$.
A key step in the proof of the geometric Langlands equivalence is the compatibility between the automorphic constant term functor and the spectral constant term functor.
\begin{thm}[{\cite[Theorem~15.1.13]{GLC3}}]\label{thm:constant-term-compatibility}
    There is a commutative diagram
    \[\begin{tikzcd}
        \Dmod(\Bun_G)\arrow{r}{\mathbb L_G}\arrow[swap]{d}{\CT^-_{*,\rho(\omega_X)}[-d]}&\IndCoh_\Nilp(\LS_{\check G})\arrow{d}{\CT^{-,\spec}}\\
        \Dmod(\Bun_T)\arrow{r}{\mathbb L_T}&\QCoh(\LS_{\check T}),
    \end{tikzcd}
    \]
    where $d=\delta_{(N^-)_{\rho(\omega_X)}}=\dim(\Bun_{N^-,\rho(\omega_X)})$ as in \cite[\S9.6.5]{GLC2} and $\CT^-_{*,\rho(\omega_X)}=t_{\rho(\omega_X)}\circ\CT^-_*$ where $t_{\rho(\omega_X)}$ is the translation-by-$\rho(\omega_X)$ functor.
\end{thm}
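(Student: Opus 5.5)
The plan is to deduce the statement from the dual compatibility for Eisenstein series, and to get that compatibility from the two properties that characterize $\mathbb L_G$: its compatibility with Whittaker coefficients and its compatibility with Hecke functors. This is the strategy of \cite{GLC3}.

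\textbf{Step 1: dualize.}
\begin{itemize}
\item On the automorphic side, the Drinfeld--Gaitsgory theory of miraculous duality on $\Bun_G$ and $\Bun_T$ identifies the dual of $\CT^-_*$ with $\Eis^-_!$ (for the opposite Borel), up to the twist by $\rho(\omega_X)$ and a shift.
\item On the spectral side, Lemma~\ref{lem:serre-duality}, applied to $\pp^{-,\spec}$ and $\qq^{-,\spec}$, identifies the Serre dual of $\CT^{-,\spec}=(\qq^{-,\spec})^{\IndCoh}_*(\pp^{-,\spec})^!$ with $\Eis^{-,\spec}=(\pp^{-,\spec})_*^{\IndCoh}(\qq^{-,\spec})^!$.
\item Proposition~\ref{lem:duality-intertwined} for $G$ and for $T$ matches miraculous duality with Serre duality. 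Its Chevalley involutions are harmless, because $\tau$ exchanges $\check B$ and $\check B^-$.
\end{itemize}
After this, the theorem is equivalent to an isomorphism
\[
\mathbb L_G\circ\Eis^-_{!,\rho(\omega_X)}\simeq \Eis^{-,\spec}\circ\mathbb L_T
\]
up to a shift, which I will track separately.

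\textbf{Step 2: Hecke compatibility.} Show that both sides of this isomorphism intertwine the Hecke actions via $\Rep(\check G)\to\Rep(\check T)$.
\begin{itemize}
\item On the spectral side this is formal. Pullback along $\LS_{\check B^-}\to\LS_{\check G}$ and pushforward to $\LS_{\check T}$ are compatible with the tautological bundles.
\item On the automorphic side this is the Braverman--Gaitsgory Hecke property of geometric Eisenstein series. It uses the stratification of the Zastava/Drinfeld compactification, and the shift by $\rho(\omega_X)$ is what makes the Hecke eigen-property exact.
\end{itemize}
Since $\QCoh(\LS_{\check T})$ is generated by objects on which the Hecke action through $\Rep(\check T)$ is determined by global sections, Hecke compatibility reduces the isomorphism to a comparison after applying global sections $\Gamma(\LS_{\check G},-)$. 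By the defining diagram of $\mathbb L_G$, that comparison is a comparison with the Whittaker coefficient functor $\mathrm{coeff}$.

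\textbf{Step 3: Whittaker coefficients.} Compute $\mathrm{coeff}\circ\Eis^-_{!,\rho(\omega_X)}$.
\begin{itemize}
\item Automorphic side: use the geometric Casselman--Shalika formula. Unfolding the Whittaker integral over the Bruhat strata of $\Bun_{N^-}$ should kill everything except the open stratum. This gives the composite of global sections on $\Bun_T$ with the torus coefficient functor, shifted by $\dim\Bun_{N^-,\rho(\omega_X)}$; this is where the shift $d$ in the statement comes from.
\item Spectral side: compute $\Gamma(\LS_{\check G},\Eis^{-,\spec}(-))=\Gamma(\LS_{\check B^-},(\qq^{-,\spec})^!(-))$. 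Then use the fact that $\qq^{-,\spec}$ is quasi-smooth with fibers that are linear stacks of $\check{\mathfrak n}^-$-valued de Rham cohomology, and identify this with $\Gamma(\LS_{\check T},-)$ up to the matching shift.
\end{itemize}

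I expect Step 3, together with making the comparison functorial and compatible with the Hecke structures from Step 2, to be the main obstacle. It requires the full Zastava analysis and careful bookkeeping of cohomological shifts and of the $\rho(\omega_X)$-translation. I would first try to organize it through the spectral and automorphic Jacquet functors, checking the identification on the generators $\delta_{\sigma*}$, $\sigma\in\LS_{\check T}$, and then extending by continuity and Hecke-equivariance.
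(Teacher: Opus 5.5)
The paper gives no argument for this statement; it is quoted from \cite[Theorem~15.1.13]{GLC3}. So the question is whether your outline stands on its own. It does not: Steps~2 and~3, which carry all the content, are incorrect as stated.

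\textbf{Step 2.} The reduction to global sections fails for two reasons.

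First, neither $\Eis^-_!$ nor $\Eis^{-,\spec}$ intertwines the Hecke actions through restriction $\Rep(\check G)\to\Rep(\check T)$. On the spectral side, pulling back the tautological bundle of $V\in\Rep(\check G)$ along $\pp^{-,\spec}$ gives the bundle of $V$ viewed as a $\check B^-$-representation. That bundle is only \emph{filtered}, with associated graded $(\qq^{-,\spec})^*$ of $\bigoplus_\lambda V_\lambda$. Likewise $\Eis^-_!$ is Hecke-compatible only up to a filtration; the exact Braverman--Gaitsgory property holds for $\Eis_{!*}$. So your functors are not $\QCoh(\LS_{\check G})$-linear for the module structure on $\QCoh(\LS_{\check T})$ that you use. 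Spectrally, only $(\pp^{-,\spec})^{\IndCoh}_*$ is linear, and you would need a matching automorphic enhancement of $\Eis^-_!$ at the level of $\LS_{\check B^-}$.

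Second, even for linear functors, being determined by the composite with $\Gamma(\LS_{\check G},-)$ holds for $\QCoh(\LS_{\check G})$-valued functors, not $\IndCoh_\Nilp$-valued ones. Global sections factor through $\Psi\colon\IndCoh_\Nilp(\LS_{\check G})\to\QCoh(\LS_{\check G})$. The kernel of $\Psi$ is supported on reducible local systems, which is exactly the Eisenstein part you are trying to control. This paper shows the problem concretely: $\Gamma(\LS_{\check G},\mathbb L_G(\omega_{\Bun_G}))=\mathrm{coeff}(\omega_{\Bun_G})=0$, yet $\mathbb L_G(\omega_{\Bun_G})\neq 0$. So Whittaker coefficients can at best give the coarse identity $\Psi\circ\mathbb L_G\circ\Eis^-_!\simeq\Psi\circ\Eis^{-,\spec}\circ\mathbb L_T$. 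You then need a separate lifting step. For example, show that both composites send compact objects into $\Coh(\LS_{\check G})$, on which $\Psi$ is fully faithful.

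\textbf{Step 3.} The computation is wrong on both sides.

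Spectrally, $\Gamma(\LS_{\check B^-},(\qq^{-,\spec})^!\mathcal F)\simeq\Gamma(\LS_{\check T},\mathcal F\otimes\mathcal A)$ with $\mathcal A=(\qq^{-,\spec})_*(\qq^{-,\spec})^!\cO_{\LS_{\check T}}$. This $\mathcal A$ is not a shifted structure sheaf. Take $\check G=\SL_2$, $g\ge 2$, and a $\check T$-local system $\sigma$ with $\check{\mathfrak n}^-_\sigma$ nontrivial. The fiber of $\qq^{-,\spec}$ over $\sigma$ is the affine space $H^1_\dR(X,\check{\mathfrak n}^-_\sigma)\simeq\bA^{2g-2}$, so $\mathcal A$ has infinite rank there.

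Automorphically, grant that the non-open Bruhat strata drop out. The open stratum is then the Zastava space, which maps to $\Bun_T$ through the configuration spaces $X^\lambda$ of colored divisors. Hence $\mathrm{coeff}\circ\Eis^-_!$ pairs the pullbacks of $\mathcal F$ to all the $X^\lambda$ with a nontrivial factorization algebra; it is not a single shifted fiber.

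The real content of this step is identifying that Zastava factorization algebra, pushed forward to $\Bun_T$, with the counterpart of $\mathcal A$ under $\mathbb L_T$, naturally in $\mathcal F$. The shift $d$ and the translation $t_{\rho(\omega_X)}$ come from the Whittaker normalization in that computation. They have nothing to do with making a Hecke property exact.
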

Now, the following is the spectral counterpart of Proposition~\ref{prop:eisenstein-and-delta}. Before stating the isomorphism, note that $\QCoh(\LS_{\check T})$ carries a $\pi_1^{\mathrm{alg}}(T)$-grading, determined by the evaluation morphism $\operatorname{ev}_x\colon\LS_{\check T}\to\B\check T$ for a choice of base point $x\in X(e)$. Equivalently, under the (enhanced) Fourier-Mukai equivalence \eqref{eq:FM-enhanced} this is the grading by connected components of $\Bun_T$. In particular, for any element $\lambda\in\pi_1^{\mathrm{alg}}(\check T)$, the category $\QCoh(\LS_{\check T})$ inherits a $\Z$-grading.
\begin{prop}\label{prop:spectral-image}   
Let $\gamma$ be the morphism $\LS_{\check Z}\to\LS_{\check T}$. There is a natural isomorphism of functors $\QCoh(\LS_{\check Z})\to\QCoh(\LS_{\check T})$
\[\CT^{-,\spec}\circ\Poinc^\spec\simeq\shear\circ\gamma_*.\]
Here $\shear$ is the auto-equivalence of $\QCoh(\LS_{\check T})$ which shears with respect to the $\Z$-grading on $\QCoh(\LS_{\check T})$ induced by $2\check\rho$.
\end{prop}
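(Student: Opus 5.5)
Write $\mathcal Y:=\LS_{\check Z\check N}/\check\rho(\bG_m)$. The plan is to compute $\CT^{-,\spec}\circ\Poinc^\spec=(\qq^{-,\spec})^{\IndCoh}_*(\pp^{-,\spec})^!\iota^{\IndCoh}_*\Sigma^!\exp^\spec_{\LS_{\check Z}}$ by base change along the fiber product
\[
\mathcal W:=\mathcal Y\times_{\LS_{\check G}}\LS_{\check B^-}.
\]
Since $\iota$ and $\pp^{-,\spec}$ are schematic (or at least ind-schematic), base change for $\IndCoh$ applies and gives $(\pp^{-,\spec})^!\iota_*^{\IndCoh}\simeq \iota'^{\IndCoh}_*(\pp')^!$. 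The whole computation then reduces to understanding $\mathcal W$ together with its two maps, one to $\LS_{\check T}$ and one to $(\LS_{\check Z}\times\Omega\bA^1)/\bG_m$.

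Pointwise, $\mathcal W$ classifies a $\check Z\check N$-local system together with a reduction of the induced $\check G$-local system to $\check B^-$. The plan is to stratify $\mathcal W$ by relative position, i.e.\ by Bruhat cells $\check B^-\backslash\check G/\check B$. The key claim is that only the open (big) cell contributes; there the relative position is generic, and the fiber product should be $\LS$ of $\check Z\check N\cap\check B^-=\check Z$, twisted by the $\bG_m$. So the open stratum is essentially $\LS_{\check Z}/\bG_m$. On this stratum:
\begin{itemize}
\item the map to $\LS_{\check T}$ is $\gamma$, twisted by $2\check\rho$;
\item the map to $(\LS_{\check Z}\times\Omega\bA^1)/\bG_m$ is the identity on $\LS_{\check Z}$ times the zero section $0\colon \mathrm{pt}/\bG_m\to\Omega\bA^1/\bG_m$.
\end{itemize}
Pulling $\exp^\spec$ back along the zero section gives $0^!\exp^\spec\simeq\cO_{B\bG_m}$, by the multiplicativity in the remark after Definition~\ref{def:spectral-exponential}. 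Pushing forward along $\LS_{\check Z}/\bG_m\to\LS_{\check T}$, where $\bG_m$ acts through $2\check\rho$ with weight bookkeeping from the Koszul shearing, should then produce exactly $\shear\circ\gamma_*$. The appearance of the shear is explained by the fact that $\exp^\spec_{\LS_{\check Z}}$ is defined through $(j_*)^\shear$, so the grading shifts by the $\bG_m$-weight.

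The main obstacle is showing that the non-open strata contribute zero. For a stratum indexed by $w\neq w_0$, the composite to $\Omega\bA^1$ should factor through a map along which the relevant simple-root component of the nondegenerate character is unconstrained. Concretely, that stratum should be a product with $\LS_{\bG_a}$ in the direction of some simple root $\alpha$ with $w(\alpha)$ positive, mapping isomorphically onto the $\Omega\bA^1$ factor via Lemma~\ref{lem:LS_Ga}. Then one needs the vanishing
\[
\Gamma\big(\Omega\bA^1/\bG_m,\exp^\spec\big)=0 ,
\]
i.e.\ the pushforward along $\Omega\bA^1/\bG_m\to\mathrm{pt}/\bG_m$ vanishes. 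Under Koszul duality (Lemma~\ref{lem:koszul-commutative}) this pushforward becomes $!$-restriction to $0\in\bA^1/\bG_m$ of $j_*e$, which vanishes, mirroring the vanishing of $\int\exp$ on $\bA^1$.

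To organize the argument I would filter $\mathcal W$ by closed unions of strata and use the resulting exact triangles in $\IndCoh$. Care is needed because the strata are derived and not necessarily smooth. I would first verify the argument for $\check G=\SL_2$, where there are only two cells and $\mathcal W$ can be written down explicitly via extensions as in \eqref{eq:example-SL2}. The general case would then be reduced to this by an induction along a reduced word, factoring through parabolics.
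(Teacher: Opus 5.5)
Your skeleton agrees with the paper: base change to $\mathcal W$ (the paper's $\mathfrak X$), the Bruhat stratification with open stratum $\LS_{\check Z}/\check\rho(\bG_m)$, and the expectation that the other strata die because the nondegenerate character $\psi\colon\check N\to\bG_a$ is nonzero on $\check N_w$. There are, however, two genuine gaps.

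\textbf{The non-open strata.} The mechanism you propose for killing them does not work. A stratum is $\LS_{\check Z\check N_w}/\check\rho(\bG_m)$, and $\check N_w$ is in general only a semidirect product $\ker(\psi|_{\check N_w})\rtimes U_\alpha$. For example, take $\check G=\SL_3$ and $\check N_w=\check N$ the Heisenberg group: its center is $U_{\alpha+\beta}$, on which $\psi$ vanishes. So $\LS_{\check N_w}$ merely fibers over $\LS_{\bG_a}$, and there is no evident $\LS_{\bG_a}$-factor compatible with $\Sigma$. Moreover, the pieces of the filtration coming from the stratification are not the $!$-restrictions of $(\Sigma\circ\pp')^!\exp^\spec$ to the reduced strata; one gets local cohomology, i.e.\ contributions of infinitesimal thickenings. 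So even a product structure on each reduced stratum would not by itself give the vanishing, and the reduction to $\SL_2$ along a reduced word is left unspecified.

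The missing idea is singular support. The paper dualizes and computes
$\Whit^\spec\circ\Eis^{-,\spec}=\sing_{\LS_{\check Z}}\circ(\Sigma\circ\pr_2)^{\IndCoh}_*\circ(\qq^{-,\spec}\circ\pr_1)^!$.
The pullback lands in $\QCoh(\mathfrak X)$, so the cone of restriction to the open stratum is quasi-coherent and supported on the strata $w\neq 1$. On each such stratum the singular codifferential of $\sigma$ is injective precisely because $\psi|_{\check N_w}\neq 0$. Hence $\sigma^{\IndCoh}_*$ preserves quasi-coherence by \cite[Theorem~1.3.11]{singular-support}, and $\sing_{\LS_{\check Z}}$ kills $\QCoh$. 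This uses only $\psi|_{\check N_w}\neq0$ and is insensitive to thickenings. In your non-dual formulation you would need the same singular-support estimate together with the vanishing of $\Gamma(\mathcal Q\otimes^!\exp^\spec)$ for all quasi-coherent $\mathcal Q$ (the statement that $\sing$ kills $\QCoh$). Your (correct) vanishing $\Gamma(\Omega\bA^1/\bG_m,\exp^\spec)=0$ is only the case $\mathcal Q=\omega$.

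\textbf{The open stratum.} This step is also wrong as stated: $0^!\exp^\spec\not\simeq\cO_{\B\bG_m}$. As a graded vector space, $\Kos(M)=\Hom(0_*e,M)=0^!M$. So Lemma~\ref{lem:koszul-commutative} with $V=0$ identifies $0^!$ with pushforward along $\bA^1/\bG_m\to\pt/\bG_m$. Therefore $0^!\exp^\spec$ is the sheared regular representation $\bigoplus_{n\in\Z}e(n)[2n]$ of $\bG_m$. Equivalently, $0^!$ of the colimit $e\to e(1)[2]\to\cdots$ is $e[\xi^{\pm1}]$. The unit condition in the remark after Definition~\ref{def:spectral-exponential} cannot be applied literally to $\exp^\spec$.

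Accordingly, the paper shows $\beta^!\circ\exp^\spec_{\LS_{\check Z}}\simeq(\pr_2\circ j_{\bA^1})^\shear_*$, the sheared pushforward along the quotient map $\LS_{\check Z}\to\LS_{\check Z}/\check\rho(\bG_m)$. Composing with $\alpha_*$ gives $\gamma_*$ exactly because $\alpha\circ\pr_2\circ j_{\bA^1}=\gamma$. The shear is then moved past $\alpha_*$ using \cite[Example~6.3.10]{relative-langlands}. With $\cO_{\B\bG_m}$ in place of the regular representation, you would push forward an object with trivial $\bG_m$-action and lose every nonzero $\bG_m$-weight. Shearing only regrades, so no weight bookkeeping can restore them.
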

\begin{proof}
    Let us pass to the dual, so we will compute $\Whit^{\spec}\circ\Eis^{-,\spec}$. Then
    \[
    \Whit^\spec\circ(\pp^{-,\spec})_*^{\IndCoh}\circ(\qq^{-,\spec})^!=\sing_{\LS_{\check Z}}\circ\Sigma_*^{\IndCoh}\circ\iota^!\circ(\pp^{-,\spec})_*^{\IndCoh}\circ(\qq^{-,\spec})^!.
    \]
Let $\mathfrak X:=\LS_{\check B^-}\times_{\LS_{\check G}}(\LS_{\check Z\check N}/\check\rho(\bG_m))$, which sits in the diagram 
\[
\begin{tikzcd}
    \mathfrak X\arrow{r}{\mathrm{pr}_2}\arrow[swap]{d}{\mathrm{pr}_1}&\LS_{\check Z\check N}/\check\rho(\bG_m)\arrow{r}{\Sigma}\arrow{d}{\iota}&(\LS_{\check Z}\times\Omega\bA^1)/\bG_m\\
    \LS_{\check B^-}\arrow{r}{\pp^{-,\spec}}\arrow[swap]{d}{\qq^{-,\spec}}&\LS_{\check G}\\
    \LS_{\check T}.
\end{tikzcd}
\]
By base change, \cite[\S5.2.5]{indcoh}, we see
\[
\Sigma_*^{\IndCoh}\circ\iota^!\circ(\pp^{-,\spec})_*\circ(\qq^{-,\spec})^!\simeq(\Sigma\circ\mathrm{pr}_2)^{\IndCoh}_*\circ(\qq^{-,\spec}\circ\mathrm{pr}_1)^!.
\]
The pullback $(\qq^{-,\spec}\circ\mathrm{pr}_1)^!$ will factor through $\QCoh(\mathfrak X)$ by \cite[Theorem~1.3.11]{singular-support}.

Recall that the stack $\LS_{\check B^-}\times_{\LS_{\check G}}\LS_{\check N}$
carries a Bruhat stratification into locally closed substacks $\LS_{\check N_w}$, where $\check N_w:=\check N\cap w\check B^-w^{-1}$ for any $w\in W$.\footnote{The stack $\check N\backslash\check G/\check B^-$ admits a stratification into the locally closed substacks $\pt/\check N_w$. This is the global analog. See for example \cite[\S15.3.2]{GLC3}.} Similarly, the stack $\mathfrak X$ carries a Bruhat stratification into locally closed substacks $\LS_{\check Z\check N_w}/\check\rho(\bG_m)$, where the open piece is $\LS_{\check Z}/\check\rho(\bG_m)$ corresponding to $w=1$. Let $j$ denote the open immersion of $\LS_{\check Z}/\check\rho(\bG_m)$ into $\mathfrak X$. Then the cone of
\begin{equation}\label{eq:open-strata}
(\qq^{-,\spec}\circ\mathrm{pr}_1)^!\to j_*^{\IndCoh}j^!(\qq^{-,\spec}\circ\mathrm{pr}_1)^!
\end{equation}
is filtered with subquotients which are quasi-coherent sheaves scheme-theoretically supported on $\LS_{\check Z\check N_w}/\check\rho(\bG_m)$ where $w\ne1$. Then the singular co-differential of the morphism
\[\sigma\colon\LS_{\check Z\check N_w}/\check\rho(\bG_m)\to\mathfrak X\xrightarrow{\Sigma\circ\pr_2}(\LS_{\check Z}\times\Omega\bA^1)/\bG_m\] at a $\check Z\check N_w$-local system $(\cP,\nabla)$ on $X\times S$ is (letting $\mathfrak z(\check\fg)=\Lie(\check Z)$)
\[
\Sing(\sigma)_{(\cP,\nabla)}\colon\Gamma(X^\dR\times S,\fz(\check\fg)^*)\times\cO_S\to\Gamma(X^\dR\times S,\fz(\check\fg)^*)\times\Gamma(X^\dR\times S,\operatorname{Lie}(\check N_w)_\cP^*)
\]
which is the identity on the first factor and is induced by the non-degenerate character $\check N_w\subset\check N\to\bG_a$ on the second factor. Whenever $w\ne1$ there exists a simple root $\alpha$ such that $w(\alpha)$ is negative, hence the composition $\check N_w\to\bG_a$ is nonzero. In particular $\Sing(\sigma)_{(\cP,\nabla)}$ is injective, so by the functoriality of singular support \cite[Theorem~1.3.11]{singular-support} the functor $\sigma_*^{\IndCoh}$ maps the subcategory $\QCoh(\LS_{\check Z\check N_w}/\check\rho(\bG_m))$ to the subcategory $\QCoh((\LS_{\check Z}\times\Omega\bA^1)/\bG_m)$. In particular, $\sing_{\LS_{\check Z}}\circ\sigma_*^{\IndCoh}=0$.

Thus, upon applying $\sing_{\LS_{\check Z}}\circ(\Sigma\circ\mathrm{pr}_2)^{\IndCoh}_*$ to \eqref{eq:open-strata} we obtain an isomorphism
\[
\sing_{\LS_{\check Z}}\circ(\Sigma\circ\mathrm{pr}_2)^{\IndCoh}_*\circ(\qq^{-,\spec}\circ\mathrm{pr}_1)^!\simeq\sing_{\LS_{\check Z}}\circ(\Sigma\circ\mathrm{pr}_2\circ j)^{\IndCoh}_*\circ(\qq^{-,\spec}\circ\mathrm{pr}_1\circ j)^!
\]
Here $\alpha:=\qq^{-,\spec}\circ\pr_1\circ j$ is the morphism $\mathrm{pr}_1\circ j\colon\LS_{\check Z}/\check\rho(\bG_m)$ is induced from the inclusion $\check Z\subset\check T$ and $\beta:=\Sigma\circ\mathrm{pr}_2\circ j$ is the morphism $\LS_{\check Z}/\check\rho(\bG_m)\to(\LS_{\check Z}\times\Omega\bA^1)/\bG_m$ induced from the obvious morphism $\pt\to\Omega\bA^1$. We have thus far shown
\[
\Whit^\spec\circ\Eis^{-,\spec}\simeq\sing_{\LS_{\check Z}}\circ\beta^{\IndCoh}_*\circ\alpha^!,
\]
so dually,
\[
\CT^{-,\spec}\circ\Poinc^\spec\simeq\alpha_*^{\IndCoh}\circ\beta^!\circ\exp_{\LS_{\check Z}}^\spec.
\]
By Lemma~\ref{lem:koszul-commutative} with $\bG_m$-equivariance, there is a commutative diagram
\[
\begin{tikzcd}
    \QCoh(\LS_{\check Z})\arrow{r}{\exp^\spec}\arrow{d}{=}&\IndCoh_{0\times\bA^1}\!\big((\LS_{\check Z}\times\Omega\bA^1)/\bG_m\big)\arrow{r}{\beta^!}\arrow{d}{\Kos}&\arrow{d}{=}\QCoh(\LS_{\check Z}/\check\rho(\bG_m))\\
    \QCoh(\LS_{\check Z})\arrow{r}{(j_{\bA^1})_*^\shear}&\QCoh((\LS_{\check Z}\times\bA^1)/\bG_m)^\shear\arrow{r}{(\mathrm{pr}_2)_*^\shear}&\QCoh(\LS_{\check Z}/\check\rho(\bG_m)).
\end{tikzcd}
\]
Next, as noted in \cite[Example~6.3.10]{relative-langlands} there is a commutative diagram
\[
\begin{tikzcd}
    \QCoh(\LS_{\check Z})\arrow{rr}{(\pr_2\circ j_{\bA^1})_*^\shear}\arrow{d}{=}&&\QCoh(\LS_{\check Z}/\check\rho(\bG_m))\arrow{d}{\shear}\arrow{r}{\alpha_*}&\QCoh(\LS_{\check T})\arrow{d}{\shear}\\
    \QCoh(\LS_{\check Z})\arrow{rr}{(\pr_2\circ j_{\bA^1})_*}&&\QCoh(\LS_{\check Z}/\check\rho(\bG_m))\arrow{r}{\alpha_*}&\QCoh(\LS_{\check T}),
\end{tikzcd}
\]
where $\shear$ is with respect to the $\Z$-grading on $\QCoh(\LS_{\check T})$ arising from $2\check\rho$.
\end{proof}

\section{Concluding the proof} \label{sec:conclusion}
We will combine the ingredients thus far to compute the image of the constant sheaf.

\subsection{The image of the constant sheaf}
\begin{lemma}\label{lem:conservative-and-t-exact}
    The functor $(\pp^-\circ\rr)^![2\delta_N]\colon\Dmod_0(\Bun_G)\to\Dmod_0(\Bun_T)$ is conservative and t-exact.
\end{lemma}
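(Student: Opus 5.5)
The plan is to work with the explicit description of $\Dmod_0$ on both sides as ind-lisse D-modules, i.e. local systems, and to reduce both properties to statements about fundamental groupoids of the underlying topological stacks.

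\textbf{Reduction of the shifted pullback.} The composite $\pp^-\circ\rr\colon\Bun_T\to\Bun_G$ is the morphism induced by $T\subset G$. Since objects of $\Dmod_0$ are lisse, Lemma~\ref{lem:SS-functorial} applies: the zero section is non-characteristic for every morphism of smooth stacks. It gives
\[
(\pp^-\circ\rr)^!\simeq(\pp^-\circ\rr)^*[2\operatorname{dim.rel.}(\pp^-\circ\rr)].
\]
The relative dimension is $\delta_T-\delta_G=-(g-1)\dim(G/T)=-2\delta_N$, a constant.

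\textbf{t-exactness.} On $\Dmod_0$ the natural t-structure is the one for which a local system placed in perverse degree $-\dim$ is in the heart; equivalently, $\cF[-\dim]$ is an honest local system. The functor $f^*$ for a morphism of smooth stacks shifts perverse degrees by exactly the difference of dimensions, so $f^![-2\operatorname{dim.rel.}(f)]$ is t-exact on lisse objects. With the sign conventions above this is precisely $(\pp^-\circ\rr)^![2\delta_N]$. I will check this on smooth atlases, using that $f^*$ of a local system is a local system, which can be verified after smooth base change to schemes. On each connected component this reduces to the standard fact that $!$-pullback of lisse sheaves is t-exact up to the relative-dimension shift.

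\textbf{Conservativity.} Since the functor is t-exact and $\Dmod_0$ has a t-structure compatible with filtered colimits in which objects are detected on cohomology, it suffices to show the induced functor on hearts is conservative, indeed faithful. On hearts, $\Dmod_0(\Bun_G)^\heartsuit$ is equivalent to representations of the fundamental groupoid of $\Bun_G^{\mathrm{top}}$, and the functor is restriction along $\Bun_T^{\mathrm{top}}\to\Bun_G^{\mathrm{top}}$. Restriction is conservative provided the map is surjective on $\pi_0$. I want to see every connected component of $\Bun_G$ hit, and $\pi_0(\Bun_T)=\pi_1(T)\to\pi_1(G)=\pi_0(\Bun_G)$ is surjective. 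Indeed, surjectivity on $\pi_0$ alone suffices for conservativity, since a nonzero local system has a nonzero stalk at some point of each component it lives on. The map is also surjective on $\pi_1$, as noted in the outline, but that is only needed for fullness, not here.

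\textbf{Main obstacle.} The main obstacle is making the heart-level reduction rigorous for non-finite-rank (ind-lisse) objects on a non-quasi-compact stack. Concretely, I must verify that $\Dmod_0(\Bun_G)$ is generated so that vanishing of all cohomology sheaves implies vanishing, i.e. that the t-structure is left- and right-separated. I would handle this by writing $\Bun_G$ as a union of quasi-compact opens, where $\Dmod_0$ is the category of quasi-lisse sheaves. Stalks at points are t-exact and jointly conservative there, since $!$-fiber at a point shifted by the dimension is t-exact on lisse objects. Then I would note that each point of $\Bun_G$ is connected by a path to a point in the image of $\Bun_T$ in the same component, so stalks can be transported.
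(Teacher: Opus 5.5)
Your argument is essentially the paper's: the zero section is non-characteristic for $\pp^-\circ\rr$, so the relative-dimension-shifted $!$-pullback is t-exact on lisse objects, and conservativity follows from surjectivity of $\pi_1(T)\to\pi_1(G)=\pi_0(\Bun_G)$. The only differences are that the paper cites regular holonomicity of objects of $\Dmod_0(\Bun_G)$ from \cite{quasi-temp} where you spell out the heart-level and stalk arguments, and that you have one small slip: the t-exact normalization is $f^![-\operatorname{dim.rel.}(f)]=f^*[\operatorname{dim.rel.}(f)]$, not $f^![-2\operatorname{dim.rel.}(f)]$ (the latter is just $f^*$, i.e.\ $(\pp^-\circ\rr)^![4\delta_N]$), and with $\operatorname{dim.rel.}=-2\delta_N$ the correct formula gives exactly the stated shift $[2\delta_N]$.
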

\begin{proof}
    By \cite[Lemma~4.1.5.1 and Remark~4.1.5.2]{quasi-temp} any $\cF\in\Dmod_0(\Bun_G)$ is regular holonomic, and $\pp^-\circ\rr$ is clearly non-characteristic with respect to $0$, so the functor $(\pp^-\circ\rr)^![2\delta_N]$ is t-exact (note $2\delta_N=\delta_G-\delta_T$). The functor is conservative since $\pp^-\circ\rr\colon\Bun_T\to\Bun_G$ is surjective on connected components. Indeed, the connected components of $\Bun_G$ are indexed by $\pi_{1,\mathrm{alg}}(G)$ and $\pi_{1,\mathrm{alg}}(T)\to\pi_{1,\mathrm{alg}}(G)$ is surjective. 
\end{proof}
We may finally compute the image of the constant sheaf.
\begin{thm}\label{prop:image-of-constant-sheaf}
    There is an isomorphism
    \[
    \mathbb L_G(\omega_{\Bun_G})\simeq\Poinc^\spec(\delta_{\triv_{\check Z}*}).
    \]
\end{thm}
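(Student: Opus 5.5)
Set $\cF:=\mathbb L_G^{-1}\big(\Poinc^\spec(\delta_{\triv_{\check Z}*})\big)$. The aim is to show $\cF\simeq\omega_{\Bun_G}$ by comparing constant terms. By Corollary~\ref{cor:poincare-factors2}, $\cF\in\Dmod_0(\Bun_G)$, so Proposition~\ref{prop:eisenstein-and-delta} applies to $\cF$ and to $\omega_{\Bun_G}$:
\[
\CT^-_*\cF\simeq(\pp^-\circ\rr)^!\cF[\operatorname{dim.rel.}(\qq^-)].
\]
On the spectral side, Theorem~\ref{thm:constant-term-compatibility} and Proposition~\ref{prop:spectral-image} give
\[
\mathbb L_T\big(\CT^-_{*,\rho(\omega_X)}\cF[-d]\big)\simeq\CT^{-,\spec}\Poinc^\spec(\delta_{\triv_{\check Z}*})\simeq\shear\,\gamma_*\delta_{\triv_{\check Z}*}=\shear\,\delta_{\triv_{\check T}*}.
\]
The skyscraper $\delta_{\triv_{\check T}*}$ is supported on the trivial local system, so the shear by the $2\check\rho$-grading only introduces a cohomological shift on each graded piece. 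Under $\mathbb L_T$, Example~\ref{ex:constant-D-module-under-GLC-tori} identifies $\delta_{\triv_{\check T}*}$ with $\omega_{\Bun_T}$, and the $\Z$-grading corresponds to the decomposition of $\Bun_T$ into components indexed by $\lambda$. Hence $\shear\,\delta_{\triv_{\check T}*}$ corresponds to $\omega_{\Bun_T}$ shifted on the component $\Bun_T^\lambda$ by $\langle\lambda,2\check\rho\rangle$, up to sign conventions.

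Undoing $t_{\rho(\omega_X)}$ and the shifts $d$ and $\operatorname{dim.rel.}(\qq^-)$ then gives $(\pp^-\circ\rr)^!\cF\simeq\omega_{\Bun_T}[n_\lambda]$ componentwise, for explicit integers $n_\lambda$. The bookkeeping step is to check that $n_\lambda=-2\delta_N$ on every component. For this one uses that $\operatorname{dim.rel.}(\qq^-)$ on $\Bun_T^\lambda$ equals $\dim\Bun_{N^-}$ plus a term linear in $\langle\lambda,2\check\rho\rangle$ (Riemann--Roch for $\fn^-_{\cP}$). That linear term should exactly cancel the shear shift, as the paper promises. Applying the same computation to $\omega_{\Bun_G}$, for which $(\pp^-\circ\rr)^!\omega_{\Bun_G}=\omega_{\Bun_T}$, shows that $(\pp^-\circ\rr)^![2\delta_N]$ takes the same value on $\cF$ and on $\omega_{\Bun_G}$.

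By Lemma~\ref{lem:conservative-and-t-exact}, $(\pp^-\circ\rr)^![2\delta_N]$ is t-exact and conservative on $\Dmod_0(\Bun_G)$. Since $(\pp^-\circ\rr)^![2\delta_N]\cF$ lies in the heart, $\cF$ is concentrated in a single degree. The same shift computation shows it is concentrated in degree $0$ in the normalization of Proposition~\ref{prop:fin-rank-local-systems}. Thus $\cF[\delta_G-\delta_Z]$ is a finite-rank local system, lisse because $\cF\in\Dmod_0$ and coherent because it pulls back to something coherent. By Proposition~\ref{prop:fin-rank-local-systems}, $\cF\simeq\pi_{\Bun_G}^!\cE$ for some local system $\cE$ on $\Bun_{G_\abc}$. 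The pullback of $\cE$ along $\Bun_T\to\Bun_{G_\abc}$ is trivial of rank one on each component. Because $\pi_1$ of $\Bun_T$ surjects onto that of $\Bun_{G_\abc}$, and $\pi_0$ as well, $\cE$ is rank one and trivial. Alternatively, I would try to show that $\CT^-_*$ detects morphisms, and construct the map $\omega_{\Bun_G}\to\cF$ directly via the adjunction with $\Eis$.

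I expect this last descent step to be the main obstacle. Knowing $\cF|_{\Bun_T}\simeq\omega_{\Bun_T}$ abstractly only determines the monodromy on the image of $\pi_1(\Bun_T)$. One therefore has to verify that this image generates $\pi_1(\Bun_{G_\abc})$, using the $\tau_{\le1}$ homotopy equivalence quoted in Proposition~\ref{prop:fin-rank-local-systems}. The precise cancellation of shifts also needs care, but it is routine.
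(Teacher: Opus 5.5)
Your outline follows the paper's proof. $\cF\in\Dmod_0(\Bun_G)$ by Corollary~\ref{cor:poincare-factors2}. Then Proposition~\ref{prop:eisenstein-and-delta}, Theorem~\ref{thm:constant-term-compatibility}, Proposition~\ref{prop:spectral-image} and Example~\ref{ex:constant-D-module-under-GLC-tori} compute $(\pp^-\circ\rr)^!\cF$. Lemma~\ref{lem:conservative-and-t-exact} puts $\cF$ in a single degree, and surjectivity of $\Bun_T\to\Bun_G$ on $\pi_0$ and $\pi_1$ identifies it. The step you call the main obstacle is the paper's one-line conclusion, and your treatment of it is fine. The detour through Proposition~\ref{prop:fin-rank-local-systems} is equivalent, since the $\tau_{\le1}$-equivalence $\Bun_G\to\Bun_{G_\abc}$ makes $\pi_1$-surjectivity onto $\Bun_G$ and onto $\Bun_{G_\abc}$ the same statement. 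The Eisenstein-adjunction alternative is not needed.

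The real problem is the cohomological bookkeeping. It is the only part of the proof not obtained by citation, and it is what makes the isomorphism hold with no shift.
\begin{itemize}
\item You set the target as $n_\lambda=-2\delta_N$. This is wrong and contradicts your next sentence: since $(\pp^-\circ\rr)^!\omega_{\Bun_G}=\omega_{\Bun_T}$, that value would give $\cF\simeq\omega_{\Bun_G}[-2\delta_N]$. The correct and true target is $n_\lambda=0$, i.e.\ $(\pp^-\circ\rr)^!\cF\simeq\omega_{\Bun_T}$ exactly.
\item You identify only one of the two cancellations. Write $\operatorname{dim.rel.}(\qq^-)=\delta_N+\langle2\check\rho,\lambda\rangle$ on $\Bun_T^\lambda$. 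The $\lambda$-linear part absorbs the shear: $\mathbb L_T^{-1}\circ\shear\simeq\mathbb L_T^{-1}[\operatorname{dim.rel.}(\qq^-)-\delta_N]$. But the constant $\delta_N$ does not by itself cancel $d=\delta_{(N^-)_{\rho(\omega_X)}}$.
\item The missing ingredient is the translation $t_{-\rho(\omega_X)}$. It moves the component index by $(2g-2)\rho$, which changes the shear shift by $\langle2\check\rho,(2g-2)\rho\rangle$. Since $\delta_N+\langle2\check\rho,(2g-2)\rho\rangle=d$, this is \eqref{eq:coh-shifts}, and then $d$ and $\operatorname{dim.rel.}(\qq^-)$ cancel in pairs.
\end{itemize}
Writing ``as the paper promises'' leaves exactly this work undone.

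Consequently the correct normalization is $\cF[-\delta_G]\in\Dmod_0(\Bun_G)^\heartsuit$, not $\cF[\delta_G-\delta_Z]$. This follows from $\delta_G=2\delta_N+\delta_T$ and $\omega_{\Bun_T}[-\delta_T]\in\Dmod(\Bun_T)^\heartsuit$. In the normalization of Proposition~\ref{prop:fin-rank-local-systems}, it means $\cF\simeq\pi_{\Bun_G}^!\cE$ with $\cE[-\delta_Z]$ in the heart.
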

\begin{proof}
To keep track of cohomological shifts, note that $\operatorname{dim.rel.}(\qq^-)=\delta_N+\langle2\check\rho,\lambda\rangle$ on a component $\Bun_T^\lambda$ of $\Bun_T$ by \cite[\S7.2.2]{GLC5}. Thus, if $\shear$ is the shearing auto-equivalence of $\QCoh(\LS_{\check T})$ as in Proposition~\ref{prop:spectral-image}, then
\[
\mathbb L_T^{-1}\circ\shear\simeq\mathbb L_T^{-1}[\operatorname{dim.rel.}(\qq^-)-\delta_N].
\]
Further, note that the translation functor $t_{-\rho(\omega_X)}$ interacts with shearing as
\begin{equation}\label{eq:coh-shifts}
t_{-\rho(\omega_X)}\circ\mathbb L_T^{-1}\circ\shear\simeq\mathbb L_T^{-1}[\operatorname{dim.rel.}(\qq^-)-\delta_N-\langle2\check\rho,(2g-2)\rho\rangle]=\mathbb L_T^{-1}[\operatorname{dim.rel.}(\qq^-)-\delta_{(N^-)_{\rho(\omega_X)}}].
\end{equation}
Now, by Corollary~\ref{cor:poincare-factors2} we know $\mathbb L_G^{-1}\circ\Poinc^\spec(\delta_{\triv_{\check Z}*})\in\Dmod_0(\Bun_G)$. Its $!$-pullback under $\pp^-\circ\rr$ is
\begin{align*}
    (\pp^-\circ\rr)^!\circ\mathbb L_G^{-1}\circ\Poinc&^\spec(\delta_{\triv_{\check Z}*})\simeq t_{-\rho(\omega_X)}\circ\CT^-_{*,\rho(\omega_X)}\circ \mathbb L_G^{-1}\circ\Poinc^\spec(\delta_{\triv_{\check Z}*})[-\operatorname{dim.rel.}(\qq^-)]\\
    &\simeq t_{-\rho(\omega_X)}\circ\mathbb L_T^{-1}\circ\CT^{-,\spec}\circ\Poinc^\spec(\delta_{\triv_{\check Z}*})[\delta_{(N^-)_{\rho(\omega_X)}}-\operatorname{dim.rel.}(\qq^-)]\\
    &\simeq t_{-\rho(\omega_X)}\circ\mathbb L_T^{-1}\circ\shear(\delta_{\triv_{\check T}*})[\delta_{(N^-)_{\rho(\omega_X)}}-\operatorname{dim.rel.}(\qq^-)]\\
    &\simeq t_{-\rho(\omega_X)}\circ\mathbb L_T^{-1}(\delta_{\triv_{\check T}*})\\
    &\simeq \omega_{\Bun_T}.
\end{align*}
Here, the first isomorphism is Proposition~\ref{prop:eisenstein-and-delta}, the second isomorphism is Theorem~\ref{thm:constant-term-compatibility}, the third isomorphism is Proposition~\ref{prop:spectral-image}, the fourth isomorphism is \eqref{eq:coh-shifts}, and the last isomorphism was discussed in Example~\ref{ex:constant-D-module-under-GLC-tori}.

In particular,
\[
(\pp^-\circ\rr)^!\circ\mathbb L_G^{-1}\circ\Poinc^\spec(\delta_{\triv_{\check Z}*})[-\delta_T]\simeq\omega_{\Bun_T}[-\delta_T]\in\Dmod(\Bun_T)^\heartsuit.
\]
    By Lemma~\ref{lem:conservative-and-t-exact}, the shifted pullback $(\pp^-\circ\rr)^![2\delta_N]$ is conservative and t-exact, so in fact \begin{equation}\label{eq:Poinc-in-dmod}\mathbb L_G^{-1}\circ\Poinc^\spec(\delta_{\triv_{\check Z}*})[-2\delta_N-\delta_T]\in\Dmod_0(\Bun_G)^\heartsuit.\end{equation} 
    Moreover, since $\Bun_T\to\Bun_G$ is surjective on the fundamental groups of each connected component, the object~\eqref{eq:Poinc-in-dmod} must simply be $\omega_{\Bun_G}[-\delta_G]$.
\end{proof}
\begin{remark}
    Theorem~\ref{prop:image-of-constant-sheaf} is consistent with the Hecke eigen-property of $\omega_{\Bun_G}$. Let us assume $\check Z$ is trivial in this remark, for simplicity. By \cite[Theorem~5]{derived-satake} for any representation $V$ of $\check G$ and $x\in X$, the sheaf $T_{V,x}(\omega_{\Bun_G})$ admits a filtration whose associated graded is $\bigoplus_{\lambda\in\Lambda}\omega_{\Bun_G}\otimes V_\lambda[-\langle2\check\rho,\lambda\rangle]$ where $\Lambda$ is the character lattice of $\check T\subset \check G$ and $V_\lambda$ is the $\lambda$-weight space of $V$. In terms of the spectral Poincar\'e sheaf, let $\mathrm{ev}_x\colon\LS_{\check G}(X)\to\mathbb B\check G$ be the evaluation at $x$. Then by the projection formula
    \[
        \mathrm{ev}_x^!V\otimes\Poinc^\spec(\delta_{\triv_{\check Z}*})\simeq\iota^{\IndCoh}_*\big(\iota^!\mathrm{ev}_x^!V\otimes\Sigma^!\exp^\spec\big).
    \]
    Now $\iota^!\mathrm{ev}^!_xV$ is the pullback along the evaluation morphism $\LS_{\check N}(X)/\check\rho(\bG_m)\to\mathbb B(\check N\rtimes_{\check\rho}\bG_m)$. Since $\check N$ is unipotent, the sheaf $\iota^!\mathrm{ev}^!_xV$ admits a filtration whose associated graded is $\bigoplus_{\lambda\in\Lambda}V_\lambda\otimes\cO(\langle\lambda,\check\rho\rangle)$ where $\cO(1)$ denotes the tautological line bundle on $\B\bG_m$. Since $\exp^\spec\otimes\cO(-1)\simeq\exp^\spec[2]$, our two formulas are compatible.
\end{remark}
\subsection{Conclusion of the proof}\label{subsec:main-proof}
\begin{lemma}\label{lem:spectral-poincare-is-linear}
    The functor $\Poinc^\spec$ is $\QCoh(\LS_{\check Z})$-linear with respect to the convolution module structures on $\QCoh(\LS_{\check Z})$ and $\IndCoh_\Nilp(\LS_{\check G})$.
\end{lemma}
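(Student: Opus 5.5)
We would prove the linearity by showing that each of the two functors making up $\Poinc^\spec$ in \eqref{eq:spectral-Poinc-defn}, namely $\exp^\spec_{\LS_{\check Z}}$ and $\iota^{\IndCoh}_*\Sigma^!$, is $\QCoh(\LS_{\check Z})$-linear for suitable convolution actions. The intermediate actions are as follows. The group $\check Z$ acts on every stack in the correspondence \eqref{eq:lafforgue-correspondence} by translation:
\begin{itemize}
\item on $\LS_{\check G}$ through central multiplication $m\colon\check Z\times\check G\to\check G$;
\item on $\LS_{\check Z\check N}/\check\rho(\bG_m)$ through multiplication $\check Z\times\check Z\check N\to\check Z\check N$;
\item on $(\LS_{\check Z}\times\Omega\bA^1)/\bG_m$ through multiplication on the first factor only.
\end{itemize}
The first step is to check that these translations commute with the $\bG_m$-actions. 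The $\bG_m$-action on $\B(\check Z\check N)$ comes from conjugation inside $\check Z_{\mathrm{enh}}\ltimes_{2\check\rho}\check N$. Since $\check Z$ is central in $\check Z_{\mathrm{enh}}$ and acts trivially on $\check N$, the multiplication $\check Z\times(\check Z_{\mathrm{enh}}\ltimes_{2\check\rho}\check N)\to\check Z_{\mathrm{enh}}\ltimes_{2\check\rho}\check N$ is a group homomorphism. Hence $\LS_{\check Z}$, as a group stack, acts on the quotient stacks.

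The second step is to check that $\iota$ and $\Sigma$ are $\LS_{\check Z}$-equivariant. For $\iota$ this holds because $(a,t,n)\mapsto a\cdot 2\check\rho(t)\cdot n$ intertwines multiplication by $\check Z$, which is central in $\check G$. For $\Sigma$ this holds because the non-degenerate character $\check N\to\bG_a$ ignores the $\check Z$-component, while the projection $\check Z\check N\to\check Z$ is a homomorphism. Concretely, we would write down the squares
\[
\begin{tikzcd}
\LS_{\check Z}\times\mathcal Y\arrow{r}{\operatorname{act}}\arrow{d}{1\times f}&\mathcal Y\arrow{d}{f}\\
\LS_{\check Z}\times\mathcal Y'\arrow{r}{\operatorname{act}}&\mathcal Y'
\end{tikzcd}
\]
for $f=\iota,\Sigma$ and observe that they commute.

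The third step is to upgrade these commuting squares to linearity of the functors. For $\iota^{\IndCoh}_*$, linearity follows from functoriality of pushforward, $\operatorname{act}_*(1\times\iota)_*\simeq\iota_*\operatorname{act}_*$, together with the Künneth identification $\QCoh(\LS_{\check Z})\otimes\IndCoh(\mathcal Y)\simeq\IndCoh(\LS_{\check Z}\times\mathcal Y)$. The Künneth identification holds because $\LS_{\check Z}$ is a quasi-smooth stack whose $\IndCoh$ with zero singular support is $\QCoh$, which is dualizable. For $\Sigma^!$, we need base change along the action, which requires the squares above to be Cartesian. They are: the action map is the composite of the shear automorphism $(z,y)\mapsto(z,zy)$ with a projection, so the square is Cartesian. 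Then $\IndCoh$ base change for the !-pullback versus pushforward along the proper-enough action map gives the compatibility. Here we use \cite[\S5.2.5]{indcoh} as in the proof of Proposition~\ref{prop:spectral-image}.

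The fourth step treats $\exp^\spec_{\LS_{\check Z}}$. By \eqref{eq:singularity-functor2} it equals $(j_*)^\shear$ followed by Koszul duality in the $\Omega\bA^1$-variable. Both operations take place in the $\bA^1$-direction, while $\QCoh(\LS_{\check Z})$ acts by convolution in the $\LS_{\check Z}$-direction. Thus, on $\IndCoh(\LS_{\check Z})\otimes\IndCoh(\Omega\bA^1/\bG_m)$-type tensor decompositions, the action is the identity on the second factor, and linearity is formal.

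The main obstacle is that $\bG_m$ acts on $\LS_{\check Z}$ nontrivially, via $\check Z_{\mathrm{enh}}$, so the quotients do not split as products. This affects both the Künneth step and the shearing. To handle it, we would work $\bG_m$-equivariantly, i.e.\ over $\B\bG_m$, and check that the action of $\LS_{\check Z}$ on $(\LS_{\check Z}\times\bA^1)/\bG_m$ commutes with the grading used for shearing. It does, because translation by $\LS_{\check Z}$ commutes with the $\bG_m$-action by the first step. Consequently $\shear$, which is defined via the $\Rep(\bG_m)$-action, is compatible with the $\QCoh(\LS_{\check Z})$-action.
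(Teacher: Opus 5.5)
Your proposal is correct and matches the paper's proof: write $\Poinc^\spec$ as $\iota^{\IndCoh}_*\circ\Sigma^!\circ\exp^\spec_{\LS_{\check Z}}$, note that $\iota$ and $\Sigma$ intertwine the $\LS_{\check Z}$-actions and that $\exp^\spec_{\LS_{\check Z}}$ is linear essentially by construction; you simply supply details the paper leaves implicit (Cartesianness of the action squares, $\IndCoh$ base change, compatibility of translation with the $\bG_m$-quotients). One small correction: the exterior product $\QCoh(\LS_{\check Z})\otimes\IndCoh(\mathcal Y)\to\IndCoh(\LS_{\check Z}\times\mathcal Y)$ is not an equivalence when $\check Z$ is not finite, since $\LS_{\check Z}$ is then genuinely derived and $\QCoh\neq\IndCoh$ on it; your argument only uses the existence of this functor, so nothing breaks.
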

\begin{proof}
    Recall that $\Poinc^\spec$ is defined to be the composition \eqref{eq:spectral-Poinc-defn}. We check that each functor is $\QCoh(\LS_{\check Z})$-linear. The functor
    $\exp^\spec_{\LS_{\check Z}}$ is $\QCoh(\LS_{\check Z})$-linear almost by definition. The functors $\iota_*^{\IndCoh}$ and $\Sigma^!$ are $\QCoh(\LS_{\check Z})$-linear since both morphisms $\iota$ and $\Sigma$ intertwine the action of the group stack $\LS_{\check Z}$.
\end{proof}

Now, we are finally ready to prove our main theorem, Theorem~\ref{thm:laf1}.
\begin{proof}[Proof of Theorem~\ref{thm:laf1}]
For any $\cF\in\Dmod(\Bun_{G_\abc})$ we see
    \begin{align*}
        \Poinc^\spec\circ\mathbb L_{G_\abc}(\cF)&\simeq\mathbb L_{G_\abc}(\cF)\star\Poinc^\spec(\delta_{\triv_{\check Z}*})\\
        &\simeq\mathbb L_{G_\abc}(\cF)\star\mathbb L_G(\omega_{\Bun_G})\\
        &\simeq\mathbb L_G(\pi_{\Bun_G}^!\cF\otimes^!\omega_{\Bun_G})\\
        &\simeq\mathbb L_G(\pi_{\Bun_G}^!\cF),
    \end{align*}
    where the first isomorphism is by Lemma~\ref{lem:spectral-poincare-is-linear}, the second isomorphism is by Theorem~\ref{prop:image-of-constant-sheaf}, and the third isomorphism is by Theorem~\ref{thm:GLC-intertwines-actions}.
\end{proof}

\bibliographystyle{amsalpha}
\bibliography{bibfile}

\end{document}